\newcommand{\R}{\mathbb{R}}
\newcommand{\HH}{\mathcal{H}}
\newcommand{\GG}{\mathcal{G}}
\DeclareMathOperator{\ri}{ri}
\DeclareMathOperator{\sri}{sri}
\DeclareMathOperator{\dom}{dom}
\DeclareMathOperator*{\argmin}{argmin}
\DeclareMathOperator*\prox{Prox}%
\newtheorem{theorem}{Theorem}
\newtheorem{definition}[theorem]{Definition}
\newtheorem{lemma}[theorem]{Lemma}
\newtheorem{corollary}[theorem]{Corollary}
\newtheorem{remark}[theorem]{Remark}
\newtheorem{example}[theorem]{Example}
\begin{document}

\title{A Dynamical Approach to Two-Block Separable Convex Optimization Problems with Linear Constraints}

\author{Sandy Bitterlich \thanks{Chemnitz University of Technology, Faculty of Mathematics, 09126 Chemnitz, Germany,
email: sandy.bitterlich@mathematik.tu-chemnitz.de. Research supported by DFG (Deutsche Forschungsgemeinschaft), project WA922/9-1.} \and
Ern\"{o} Robert Csetnek \thanks {University of Vienna, Faculty of Mathematics, Oskar-Morgenstern-Platz 1, A-1090 Vienna, Austria,
email: robert.csetnek@univie.ac.at. Research supported by FWF (Austrian Science Fund), project P 29809-N32.}
 \and Gert Wanka \thanks{Chemnitz University of Technology, Faculty of Mathematics, 09126 Chemnitz, Germany,
email: gert.wanka@mathematik.tu-chemnitz.de. Research supported by DFG (Deutsche Forschungsgemeinschaft), project WA922/9-1.}}

\maketitle

\noindent \textbf{Abstract.} The aim of this manuscript is to approach by means of first order
differential equations/inclusions convex programming problems
with two-block separable linear constraints and objectives, whereby (at least) one of the components of the latter is assumed to be strongly convex. Each block of the objective contains a further smooth convex function.
We investigate the dynamical system proposed and prove that its trajectories asymptotically converge to a saddle point of the Lagrangian of the convex optimization problem. Time discretization of the dynamical system
leads to the alternating minimization algorithm AMA and also to its proximal variant recently
introduced in the literature.\vspace{1ex}

\noindent \textbf{Keywords.} structured convex minimization, dynamical system, Lyapunov analysis, Proximal AMA, primal dual algorithm, Lagrangian, saddle points, subdifferential, convex optimization, duality\vspace{1ex}

\noindent \textbf{AMS subject classification.} 37N40, 49N15, 90C25, 90C46

\section{Introduction and preliminaries}

Since the seventies of the last century the investigation of dynamical systems approaching monotone inclusions and optimization problems enjoy much attention (see Br\'{e}zis, Baillon and Bruck, Crandall and Pazy \cite{brezis, baillon-brezis1976, bruck75, crpa69}). This is due to their intrinsic importance in areas like differential equations and applied functional analysis, and also since they have been recognized as a valuable tool for deriving  and investigating numerical schemes for optimization problems obtained by time discretization of the continuous dynamics. The dynamic approach to iterative methods in optimization can furnish deep insights into the expected behavior of the method and the techniques used in the continuous case can be adapted to obtain results for the discrete algorithm. We invite the reader to consult \cite{peyp-sorin2010} for more insights into the relations between the continuous and discrete dynamics.

This research area attracts the attention of the community continuously. There are several works in the last years concerning dynamical systems, which have a connection to numerical algorithms. 
Motivated by the applications in optimization where nonsmooth functions are involved, many authors consider dynamical systems defined via proximal evaluations. Through explicit time discretization they transform in relaxed versions of proximal point algorithms. For example \cite{abbas-att-arx14} Abbas and Attouch proposed a dynamical system  which is a continuous version of the forward backward algorithm (we mention here also the works
of Bolte \cite{bolte-2003} and Antipin \cite{antipin}), in \cite{babo18} an implicit forward-backward-forward dynamical system was introduced and in \cite{csma19} a dynamical system of Douglas-Rachford type was proposed.
Acceleration of the dynamics can be achieved by considering second order differential equations/inclusions where
again resolvents and proximal operators are involved in the description of the systems (see for example \cite{b-c-sicon2016} and
the works of Attouch and his co-authors \cite{att-cab, att-peyp}). This is a flourishing area in the
continuous setting since the work of Su-Boyd-Cand\`{e}s \cite{subo16}, where a second-order ordinary differential equation was proposed as the limit of Nesterov’s accelerated gradient method which involves inertial type schemes.

Let us underline that approaching optimization problems where compositions with linear operators are involved by
means of differential equations/inclusions is relatively new in the literature (and this is the focus also in this manuscript). We mention here \cite{bocs19} (which is related to continuous counterparts of primal-dual
algorithms, Proximal ADMM and the linearized proximal method of multipliers) and also the contribution of Attouch \cite{att} (related to some fast inertial Proximal ADMM schemes).

Before we introduce the dynamical system we want to investigate, let us make precise the optimization
problem we consider and mention some notations used in this context.

We consider the following two-block separable optimization problem:
\begin{align}\label{opt:Prox-AMA:primal}
&\min_{x \in \HH, z \in \GG} f(x)+h_1(x)+g(z)+h_2(z) \quad \text{s.t.} \quad Ax+Bz=b,
\end{align}
where $\mathcal{H}$, $\GG$ and $\mathcal{K}$ are real Hilbert spaces, $f:\HH \to \overline{\R}:=\R \cup \{\pm \infty\}$ is a proper, lower semicontinuous and $\sigma$-strongly convex function with $\sigma>0$ (i.e. $f-(\sigma/2)\|\cdot\|^2$ is convex),
 $g :\GG \to \overline{\R}$ is proper, convex and lower semicontinuous, $h_1:\HH\to \R$ and $h_2:\GG \to \R$ are convex and Fr\'echet differentiable functions with $L_{h_1}$, respectively $L_{h_2}$-Lipschitz continuous gradients ($L_{h_1}\geq 0 $, $L_{h_2} \geq 0$), i.e. $\|\nabla h_1(x)- \nabla h_1(y) \| \leq L_{h_1} \|x-y\|$ for every $x, y \in \HH$ (analogously for $h_2$)
 and $A:\HH \to \mathcal{K}$ and $B:\GG \to \mathcal{K}$
 are linear continuous operators such that $A \neq 0$ and $b \in \mathcal{K}$.

The Lagrangian associated with the optimization problem \eqref{opt:Prox-AMA:primal} is defined by
$L :\HH\times \GG \times \mathcal{K} \to \overline{\R},$
\begin{align*}
L(x,z,y)=f(x)+h_1(x)+g(z)+h_2(z)+\langle y, b-Ax-Bz \rangle.
\end{align*}
We say that $(x^*,z^*,y^*) \in \HH \times \GG \times \mathcal{K}$ is a saddle point of the Lagrangian $L$, if
\[L(x^*,z^*,y) \leq L(x^*,z^*,y^*) \leq L(x,z,y^*) \quad \forall (x,z,y) \in \HH\times \GG \times \mathcal{K}.	\]
It is well-known that $(x^*,z^*,y^*)$ is a saddle point of the Lagrangian $L$ if and only if $(x^*,z^*)$ is an optimal solution of \eqref{opt:Prox-AMA:primal},
$y^*$ is an optimal solution of its Fenchel-Rockafellar dual problem
\begin{equation}\label{Fenchel-dual} \sup_{y\in \mathcal{K}}\{-(f^*\Box h_1^*)(A^*y)-(g^*\Box h_2^*)(B^*y)+\langle y,b\rangle\},
\end{equation}
and the optimal objective values of \eqref{opt:Prox-AMA:primal} and \eqref{Fenchel-dual} coincide.
Note that the (Fenchel) conjugate function $f^*:\mathcal{H} \to \overline{\mathbb{R}}$ of $f:\HH \to \overline{\R}$ is defined as
\begin{equation*}
	f^*(y)=\text{sup}_{x \in \HH}\{\langle y, x \rangle -f(x)  \}  \quad \forall y \in \HH.
\end{equation*}
If $f$ is a proper, convex and lower semicontinuous function, then  $f^{**}=f$, where $f^{**}$ is the conjugate function of $f^*$.
The infimal convolution of two proper functions $f_1,f_2:{\cal H}\rightarrow \overline{\mathbb{R}}$ is the function $f_1\Box f_2:{\cal H}\rightarrow\overline{\mathbb{R}}$,
defined by $(f_1\Box f_2)(x)=\inf_{y\in {\cal H}}\{f_1(y)+f_2(x-y)\}$.

The existence of saddle points for $L$ is guaranteed when \eqref{opt:Prox-AMA:primal} has an optimal solution and, for instance,
the Attouch-Br\'{e}zis-type condition
\begin{equation}\label{reg-cond} b\in\sri (A(\dom f)+B(\dom g))
\end{equation}
holds (see for example \cite[Theorem 3.4]{bot10}). In the finite dimensional setting this asks for the existence of
$x \in \ri(\dom f )$ and $z \in \ri(\dom g)$ satisfying $Ax+Bz=b$. For more on these generalized
interiority notions and their role in optimization we refer the reader to \cite{baco17} and \cite{bot10}.


Let $f$ be a proper, convex and lower semicontinuous function. Then the \emph{Proximal Point Operator} of $f$ with parameter $\gamma >0$ is defined as:
\begin{equation*}
\text{Prox}_{\gamma f}(x)=\argmin_{y \in \mathcal{H}}\left\{\gamma f(y)+\frac{1}{2}\|y-x\|^2\right\}.
\end{equation*}

The system of optimality conditions for the primal-dual pair of optimization problems \eqref{opt:Prox-AMA:primal}-\eqref{Fenchel-dual} reads:
\begin{equation}\label{opt-cond} A^*y^*-\nabla h_1(x^*) \in \partial f(x^*), \
B^*y^*-\nabla h_2(z^*)\in \partial g(z^*) \ \mbox{ and } Ax^*+Bz^*=b.
\end{equation}
More precisely, if \eqref{opt:Prox-AMA:primal} has an optimal solution $(x^*,z^*)$ and a qualification condition, like for instance \eqref{reg-cond}, is fulfilled,
then there exists an optimal solution $y^*$ of \eqref{Fenchel-dual} such that \eqref{opt-cond} holds; consequently, $(x^*,z^*,y^*)$ is a saddle point of the
Lagrangian $L$. Conversely, if $(x^*,z^*,y^*)$ satisfies relation \eqref{opt-cond}, then $(x^*,z^*)$ is an optimal solution of \eqref{opt:Prox-AMA:primal} and
$y^*$ is an optimal solution of \eqref{Fenchel-dual}.
We recall that the convex subdifferential of $f$ is defined as $\partial f(x)=\{u\in \mathcal{H}: f(y)\geq f(x)+\langle u,y-x\rangle \forall y\in \mathcal{H}\}$, if $f(x) \in \R$,
and as $\partial f(x) = \emptyset$, otherwise. Notice that in case $f$ is $\sigma$-strongly convex ($\sigma>0$),  $\partial f$ satisfies the strong monotonicity property: $\langle u-v,x-y\rangle\geq \sigma\|x-y\|^2$ for all
$u\in\partial f(x), v\in\partial f(y)$, see for example \cite{baco17}.

\begin{remark}\label{1remark-sec3} If $(x_1^*,z_1^*,y_1^*)$ and $(x_2^*,z_2^*,y_2^*)$ are two saddle points of the Lagrangian $L$, then $x_1^*=x_2^*$.
	This follows easily from \eqref{opt-cond}, by using the strong monotonicity of $\partial f$ and the monotonicity of $\partial g$.
\end{remark}

Further, we denote by $S_+(\HH)$ the set of operators from $\HH$ to $\HH$ which are linear, continuous, self-adjoint and positive semidefinite.
For $M \in S_+(\HH)$ we define the seminorm $\|\cdot\|_{M} : \HH \rightarrow [0,+\infty)$,  $\|x\|_M= \sqrt{\langle x, Mx \rangle}$.
We consider the Loewner partial ordering on $S_+(\HH)$, defined for $M_1, M_2 \in \mathcal{S}_+(\mathcal{H})$ by
\begin{equation}\label{Loewner}M_1 \succcurlyeq M_2 \Leftrightarrow \|x\|_{M_1} \geq \|x\|_{M_2} \quad \forall x \in \mathcal{H}.
\end{equation}
Furthermore, we define for $\alpha>0$ the set $\mathcal{P}_\alpha(\HH):=\{M\in \mathcal{S}_+(\HH): M \succcurlyeq \alpha \textrm{Id} \}$, where $\textrm{Id} : \HH \rightarrow \HH, \textrm{Id}(x) = x$ for all $x \in \HH$, denotes the identity operator on $\HH$.

Let $A:\HH \to \GG$ be a linear continuous operator. The operator $A^*:\GG \to \HH$, fulfilling $\langle A^*y,x \rangle = \langle y, Ax \rangle $ for all $x \in \HH$ and $y \in \GG$, denotes the adjoint operator of $A$, while
$\|A\|:=\sup\{\|Ax\| : \|x\| \leq 1 \}$ denotes the norm of $A$.

The dynamical system we propose and investigate in this paper is:

\begin{equation}\label{eq:proxAMA-DS}
\begin{cases}
\dot{x}(t)+x(t) \in \left(\partial f + M_1(t)\right)^{-1} \left[M_1(t)x(t)+A^*y(t)-\nabla h_1(x(t))\right]\\[2ex]
\dot{z}(t)+z(t)\in \left(\partial g +  c(t)B^*B+M_2(t)\right)^{-1}\left[M_2(t)z(t)+B^*y(t)-c(t)B^*A(\dot{x}(t)+x(t))\right.\\[2ex]
\left.\quad \quad \quad \quad \quad \quad+c(t)B^*b-\nabla h_2(z(t))\right] \\[2ex]
\dot{y}(t)=c(t)\left(b-A(x(t)+\dot{x}(t))-B(z(t)+\dot{z}(t))\right)\\[2ex]
x(0)=x^0\in \HH, z(0)=z^0 \in \GG, y(0)=y^0 \in \mathcal{K},
\end{cases}
\end{equation}
where $c(t)>0$ for all $t \in [0,+\infty)$, and $M_1:[0,+\infty) \to S_+(\HH)$ and $M_2:[0,+\infty) \to S_+(\GG)$.

In the next section we stress that the dynamical system leads through explicit time discretization to the proximal AMA algorithm \cite{bibo19} and the AMA numerical scheme \cite{tseng91}. Furthermore we underline the role
of the operators $M_1$ and $M_2$, namely for a special choice of the linear maps $M_1$ and $M_2$ we obtain a dynamical system of primal-dual type which is a full splitting scheme. For this we consider a numerical example in order to show how the parameters for these particular linear maps can be chosen and influence the convergence of the trajectories.

We continue with the existence and uniqueness of strong global solutions of the dynamical system proposed above.
The study relies on classical semigroup theory, showing that the system corresponds in fact to a Cauchy-Lipschitz system in a product space. This is far from being trivial and requires several technical prerequisites which are
described in detail.

The last section is devoted to the asymptotic analysis of the trajectories and the connection to the
optimization problems \eqref{opt:Prox-AMA:primal} and \eqref{Fenchel-dual}. The analysis relies
on Lyapunov theory where the derivation of an appropriate energy functional plays a central role. The way
the Lyapunov functional is obtained is quite involved and technical issues have to be investigated in order to
achieve this goal (see the proof of Theorem \ref{PAMADS:th:convergence} and \eqref{PAMADS:eq:sum3a}). Finally, we prove that the trajectories converge weakly to a saddle point of the Lagrangian $L$. We conclude the paper with some open questions and perspective.

The analysis used in this manuscript relies on similar tools considered in \cite{bocs19}. Let us underline some
differences in comparison to \cite{bocs19}. First of all, our optimization problem  \eqref{opt:Prox-AMA:primal} has
a different structure, with two linear operators involved in the constrained set. Second, our dynamical system
is related to the Proximal AMA algorithm \cite{bibo19}, the AMA numerical scheme \cite{tseng91} and primal dual-type
algorithms obtained in \cite{bibo19}. The one in \cite{bocs19} is related to the Proximal ADMM \cite{babo18}, the
classical ADMM and primal-dual type algorithms. Moreover, notice that in our case $f$ is strongly convex which has an influence in the investigations performed here (and in particular the inclusion corresponding to $f$ has a more
tractable form). Moreover, in our analysis we have an additional parameter $c$, which is time varying, and this makes the investigation more involved (taking variable $c$ is motivated by \cite{tseng91}, where the numerical scheme AMA also involves a variable parameter). For more on the AMA algorithm introduced by Tseng and motivation for considering this setting we refer the reader to \cite{tseng91, bibo19, g2014}.

\section{Solution concept, discretizations, example}

We need the following definition before we specify what do we mean by a solution of \eqref{eq:proxAMA-DS}.

\begin{definition}
A function $x:[0,+\infty) \to \HH$ is said to be locally absolutely continuous, if it is absolutely continuous on every interval $[0,T], T>0$; that is, for every $T>0$ there exists an integrable function $y:[0,T] \to \HH$ such that
\[
x(t)=x(0)+\int_{0}^{t}y(s)ds \quad \forall t \in [0,T].
\]
\end{definition}

Notice that every locally absolutely continuous function is differentiable almost everywhere. Moreover,
 $x:[0,T]\to \HH$ is absolutely continuous if and only if (see \cite{att-sv2011, abat14}): for every $\varepsilon > 0$ there exists $\eta >0$ such that for any finite family of intervals $I_k=(a_k,b_k)\subseteq [0,T]$ the following property holds:
$$ \mbox{for any subfamily of disjoint intervals} \ I_j \ \mbox{with} \ \sum_j|b_j-a_j| < \eta \ \mbox{it holds} \ \sum_j\|x(b_j)-x(a_j)\| < \varepsilon.$$

We are now ready to consider the following solution concept.

\begin{definition}
	Let $(x^0,z^0,y^0) \in \HH \times \GG \times \mathcal{K}$, and $M_1:[0,+\infty) \to S_+(\HH)$ and $M_2:[0,+\infty) \to S_+(\GG)$. The function $(x,z,y):[0,+\infty) \to  \HH \times \GG \times \mathcal{K}$ is called a strong global solution of \eqref{eq:proxAMA-DS}, if the following properties are satisfied:
	\begin{enumerate}
		\item the functions $x,z,y$ are locally absolutely continuous,
		\item for almost every $t \in [0,+\infty)$
		\begin{align*}
		\dot{x}(t)+x(t) \in &\left(\partial f + M_1(t)\right)^{-1} \left[M_1(t)x(t)+A^*y(t)-\nabla h_1(x(t))\right]\\
		\dot{z}(t)+z(t)\in &\left(\partial g +  c(t)B^*B+M_2(t)\right)^{-1}\left[M_2(t)z(t)+B^*y(t)-c(t)B^*A(\dot{x}(t)+x(t))\right.\\
		&\left.+c(t)B^*b-\nabla h_2(z(t))\right] \\
		\dot{y}(t)=&c(t)\left(b-A(x(t)+\dot{x}(t))-B(z(t)+\dot{z}(t))\right),
		\end{align*}
		\item $x(0)=x^0, z(0)=z^0$, and $y(0)=y^0$.
	\end{enumerate}
\end{definition}

\begin{remark} Let us consider a discretization of the considered dynamical system.
	The first two inclusions in \eqref{eq:proxAMA-DS} can be written in an equivalent way as
	\begin{align}
	0 \in &~\partial f(\dot{x}(t)+x(t))+M_1(t)\dot{x}(t)-(A^*y(t)-\nabla h_1(x(t))),\label{PAMADS:OC_DS_1}\\
	0 \in &~\partial g(\dot{z}(t)+z(t))+c(t)B^*B(\dot{z}(t)+z(t))+M_2(t)\dot{z}(t)\nonumber\\
	&-(B^*y(t)-c(t)B^*A(\dot{x}(t)+x(t))+c(t)B^*b-\nabla h_2(z(t))),\label{PAMADS:OC_DS_2}
	\end{align}
	$  \forall t \in [0, +\infty)$. Through explicit discretization with respect to the time variable $t$ and constant step size $h_k \equiv 1$ (i.e. $x(t)\approx x_k$ and $\dot x(t)\approx x^{k+1}-x^k$) we obtain for all $k \geq 0$ the inclusions:
	\begin{align*}
	0 \in &~\partial f(x^{k+1})+M_1^k(x^{k+1}-x^k)-A^*y^k+\nabla h_1(x^k),\\
	0 \in &~\partial g(z^{k+1})+c_kB^*B(z^{k+1})+M_2^k(z^{k+1}-z^k)
	-B^*y^k+c_kB^*A(x^{k+1})-c_kB^*b+\nabla h_2(z^k).
	\end{align*}
	Furthermore, using convex subdifferential calculus this can be written equivalently for all $ k \geq 0$ as
	\begin{align*}
	0 \in &~\partial \left( f+ \langle \cdot-x^k, \nabla h_1(x^k)\rangle
	     -\langle y^k,A\cdot \rangle + \frac{1}{2}\|\cdot-x^k\|^2_{M_1^k}\right)(x^{k+1})\\
	0 \in &~\partial \left( g+ \langle \cdot-z^k, \nabla h_2(z^k)\rangle
	     -\langle y^k,B\cdot \rangle +\frac{c_k}{2}\|Ax^{k+1}+B\cdot-b\|^2+ \frac{1}{2}\|\cdot-z^k\|^2_{M_2^k}\right)(z^{k+1})\\
	\end{align*}
	Hence the dynamical system \eqref{eq:proxAMA-DS} provides through explicit time discretization the following numerical algorithm:
	
	Let $M_1^k \in \mathcal{S}_+(\HH)$ and $M_2^k \in \mathcal{S}_+(\GG)$. Choose $(x^0,z^0,y^0) \in \HH \times \GG \times \mathcal{K}$ and $(c_k)_{k \geq 0}>0$. For all $k\geq 0$ generate the sequence $(x^k,z^k,y^k)_{k\geq0}$ as follows:
	\[
	\begin{cases}\label{alg:prox-AMA-DS}
			x^{k+1}=\argmin_{x \in \HH}\left\{f(x)-\langle y^k,Ax\rangle
			+\langle x-x^{k}, \nabla h_1(x^{k}) \rangle
			+\frac{1}{2}\|x-x^{k}\|^2_{M_1^{k}}\right\}\\[2ex]
			z^{k+1} \in  \argmin_{z \in \GG}\left\{g(z)-\langle y^k,Bz\rangle
			+\frac{1}{2}c_k\|Ax^{k+1}+Bz-b\|^2  \right. \\[2ex]
			\qquad \qquad \qquad \quad\left. +\langle z-z^k, \nabla h_2(z^k) \rangle
			+\frac{1}{2}\|z-z^k\|^2_{M_2^{k}}\right\},   \\[2ex]
			y^{k+1}=y^k+c_k(b-Ax^{k+1}-Bz^{k+1}).
	\end{cases}
	\]
	The algorithm above is the proximal AMA algorithm from \cite{bibo19}. Moreover, in the particular case
$M_1^k=M_2^k=0$ and $h_1=h_2=0$, the numerical scheme is the AMA algorithm introduced by Tseng in \cite{tseng91}.
\end{remark}

\begin{remark}\label{PAMADS:re:M_2(t)} Let us show now that an appropriate choice of $M_2$ leads (both in continuous and discrete case) to
an implementable proximal step in the second inclusion. This is crucial for numerical results in applications,
see also \cite{bibo19} and \cite{babo17}. 
	For every $t\in [0,+\infty)$, we define
	\[M_2(t)=\frac{1}{\tau(t)}\text{Id}-c(t)B^*B,\]
	where $\tau(t)>0$ and $\tau(t)c(t)\|B\|^2 \leq 1$.
	
	Let $t\in [0,+\infty)$ be fixed. Then $M_2(t)$ is positively semidefinite, and the second relation in the dynamical system \eqref{eq:proxAMA-DS} becomes a proximal step. Indeed, under the given conditions, one can see that \eqref{PAMADS:OC_DS_2} is equivalent to
	\begin{align*}
		\left(\frac{1}{\tau(t)}\text{Id}-c(t)B^*B\right)z(t)+B^*y(t)-c(t)B^*A(\dot{x}(t)+x(t))+c(t)B^*b-\nabla h_2(z(t)) \in \\
		 \frac{1}{\tau(t)}\dot{z}(t)+\frac{1}{\tau(t)}z(t)+\partial g(\dot{z}(t)+z(t)).
	\end{align*}
	It follows that
	\begin{align*}
	\dot{z}(t)+z(t)=(\text{Id}+\tau(t) \partial g)^{-1}& ((\text{Id}-\tau(t)c(t)B^*B)z(t) + \tau(t)B^*y(t) -c(t)\tau(t)B^*A(\dot{x}(t)+x(t))\\
	& +c(t)\tau(t)B^*b -\tau(t)\nabla h_2(z(t))),
	\end{align*}
	which is the same as
	\begin{align*}
	\dot{z}(t)+z(t)=\text{Prox}_{\tau(t)g}& ((\text{Id}-\tau(t)c(t)B^*B)z(t) + \tau(t)B^*y(t) -c(t)\tau(t)B^*A(\dot{x}(t)+x(t))\\
	& +c(t)\tau(t)B^*b -\tau(t)\nabla h_2(z(t))).
	\end{align*}
	If we choose furthermore $M_1(t)=0$, our dynamical system \eqref{eq:proxAMA-DS} can be written in this particular setting equivalently as
	\begin{equation}\label{eq:proxAMA-DS_special_case}
	\begin{cases}
	\dot{x}(t)+x(t) \in \left(\partial f\right)^{-1} \left[A^*y(t)-\nabla h_1(x(t))\right]\\[2ex]
	\dot{z}(t)+z(t)=\text{Prox}_{\tau(t)g} ((\text{Id}-\tau(t)c(t)B^*B)z(t) + \tau(t)B^*y(t) -c(t)\tau(t)B^*A(\dot{x}(t)+x(t))\\
	\quad \quad  \quad\quad \quad \quad +c(t)\tau(t)B^*b -\tau(t)\nabla h_2(z(t))). \\[2ex]
	\dot{y}(t)=c(t)\left(b-A(x(t)+\dot{x}(t))-B(z(t)+\dot{z}(t))\right)\\[2ex]
	x(0)=x^0\in \HH, z(0)=z^0 \in \GG, y(0)=y^0 \in \mathcal{K},
	\end{cases}
	\end{equation}
	where $c(t)>0$ for all $t \in [0,+\infty)$. This can be seen as the continuous counterpart with proximal step
of the AMA scheme \cite{tseng91}.
\end{remark}	

\begin{remark}	\label{re:proxAMA-DS_uvw}
	In this paper we will often use the following equivalent formulation of the dynamical system \eqref{eq:proxAMA-DS}. For $U(t)=(x(t),z(t),y(t))$, \eqref{eq:proxAMA-DS} can be written as
		\[
		\begin{cases}
		\dot{U}(t)=\Gamma(t,U(t))\\
		U(0)=(x^0,z^0,y^0)
		\end{cases}
		\]
	where \[\Gamma:[0,+\infty)\times\HH\times\GG\times\mathcal{K}\longrightarrow\HH\times\GG\times\mathcal{K}, \quad \Gamma(t,x,z,y)=(u,v,w),\]
	is defined as
	
	\begin{equation}\label{eq:proxAMA-DS_uvw}\begin{cases}\begin{aligned}
	u=u(t,x,z,y)= &\argmin_{p \in \HH}\left\{F(t,p)
	+\frac{1}{2}\|p-(M_1(t)x+A^*y-\nabla h_1(x))\|^2\right\}-x\\[2ex]
	v=v(t,x,z,y) \in &\argmin_{q \in \GG}\left\{G(t,q)
	+\frac{c(t)}{2}\left\|q-\left(\frac{1}{c(t)}M_2(t)z+\frac{1}{c(t)}B^*y\right.\right.\right.\\[2ex]
	&\left.\left.\left.-B^*A(u+x)+B^*b-\frac{1}{c(t)}\nabla h_2(z))\right)\right\|^2\right\}-z\\[2ex]
	w=w(t,x,z,y)=&c(t)(b-A(x+u)-B(z+v))
	\end{aligned}\end{cases}
	\end{equation}
	with
	\[F:[0,+\infty)\times \HH \to \overline{\R}, \quad F(t,p)=f(p)-\frac{1}{2}\|p\|^2+\frac{1}{2}\|p\|^2_{M_1(t)}\]
	and
	\[G:[0,+\infty)\times \GG \to \overline{\R}, \quad G(t,q)=g(q)+\frac{c(t)}{2}\left(\|Bq\|^2-\|q\|^2\right)+\frac{1}{2}\|q\|^2_{M_2(t)}.\]

	Let $t \in [0,+\infty)$ be fixed. The functions $F(t,\cdot)$ and $G(t,\cdot)$ are proper and lower semicontinuous. When we assume that there exists an $\beta(t)>0$ such that $c(t)B^*B+M_2(t)\in \mathcal{P}_{\beta(t)}(\GG)$, then $q \mapsto G(t,q)+\frac{1}{2}\|q-v\|^2$ is proper, strongly convex and lower semicontinuous  for every $v \in \GG$. Since $M_1(t) \in S_+(\HH)$ and $f$ is strongly convex, the function $p\mapsto F(t,p)+\frac{1}{2}\|p-u\|^2$ is proper, strongly convex and lower semicontinuous for every $u \in \HH$.
	
	Therefore, if the assumption
	\begin{equation*}
	\text{(\textit{Cweak})}\quad \text{for every }t\in [0,+\infty) \text{ there exists an } \beta(t)>0 \text{ such that } c(t)B^*B+M_2(t)\in \mathcal{P}_{\beta(t)}(\GG)
	\end{equation*}
	holds, then in \eqref{eq:proxAMA-DS_uvw} $u$ and $v$ are uniquely defined. 	
	
	A stronger variant of condition (\textit{Cweak}) is
	\begin{equation*}
	\text{(\textit{Cstrong})}\quad \text{ there exists an } \beta>0 \text{ such that } c(t)B^*B+M_2(t)\in \mathcal{P}_{\beta}(\GG) \quad \forall t \in [0,+\infty).
	\end{equation*}
	Note that if (\textit{Cstrong}) holds, then (\textit{Cweak}) holds with $\beta(t):=\beta>0$ for every $t \in [0,+\infty)$.	
\end{remark}	

\begin{example} \label{PAMADS:example}
	We consider the following optimization problem
	\begin{align}
	\inf_{x\in \R^2, z \in \R^2}\frac{1}{2}\left\|x-d\right\|^2+\|z\|_1,\label{PAMADS_example1_primal}\\
	\text{s.t.}  \quad Ax+Bz=0 \nonumber
	\end{align}
	with
	\[
	A=\frac{1}{\sqrt{8}}\begin{pmatrix*}[r]
	2 & 1 \\
	-2 & 1
	\end{pmatrix*} \quad \text{,} \quad
	B=\frac{1}{5}\begin{pmatrix*}[r]
	-3 & 0 \\
	4 & 0
	\end{pmatrix*}\quad \text{and} \quad
	d=\begin{pmatrix*}[r]
	1\\
	0
	\end{pmatrix*},
	\]
	which is problem \eqref{opt:Prox-AMA:primal} with $\HH=\GG=\R^2$, $f,g,h_1,h_2:\R^2\to \R, f(x)=\frac{1}{2}\|x-d\|^2, g(z)=\|z\|_1, h_1(x)=h_2(z)=0,$ for every $x\in\R^2$ and $z\in \R^2$. One can verify that \eqref{PAMADS_example1_primal} has a unique optimal solution, which is $x^*=(0,0)$ and $z^*=(0,0)$. The Fenchel-Rockafellar dual problem of \eqref{PAMADS_example1_primal} is
	\begin{equation*}
		\sup_{y \in \R^2}\{-f^*(A^*y)-g^*(B^*y)\}
	\end{equation*}
	which is equivalent to
	\begin{equation*}
		-\inf_{y \in \R^2}\{f^*(A^*y)+g^*(B^*y)\}.
	\end{equation*}
	and
	\begin{equation}\label{PAMADS_example1_dual}
	-\inf_{\|B^*y\|_{\infty}\leq 1} \{\frac{1}{2}\|A^*y\|^2+\langle A^*y,d \rangle\},
	\end{equation}
	where the unique optimal solution is  $y^*=(0.7071,-0.7071)$.
	
	For $U(t)=(x(t),z(t),y(t))$, $M_1(t)=0$ and $M_2(t)=\frac{1}{\tau(t)}\text{Id}-c(t)B^*B$ we can write the dynamical system for this problem \eqref{eq:proxAMA-DS} similarly as in Remark \ref{re:proxAMA-DS_uvw} (see also
\eqref{eq:proxAMA-DS_special_case})
	\[
	\begin{cases}
	\dot{U}(t)=\Gamma(U(t))\\
	U(0)=(x^0,z^0,y^0)
	\end{cases}
	\]
	where \[\Gamma:[0,+\infty)\times\HH\times\GG\times\mathcal{K}\longrightarrow\HH\times\GG\times\mathcal{K}, \quad \Gamma(t,u_1,u_2,u_3)=(u_4,u_5,u_6),\]
	is defined as
	\begin{equation*}\begin{cases}\begin{aligned}
	u_4= &\argmin_{p \in \HH}\left\{f(p)-\frac{1}{2}\|p\|^2 +\frac{1}{2}\|p-A^*u_3\|^2\right\}-u_1\\
	   = &A^*u_3+d-u_1\\
	u_5= &\text{Prox}_{\tau(t)g}\left((Id-\tau(t)c(t)B^*B)u_2+\tau(t)B^*u_3-c(t)\tau(t)B^*A(u_1+u_4)\right)-u_2\\
	u_6=&c(t)(-A(u_1+u_4)-B(u_2+u_5)).
	\end{aligned}\end{cases}
	\end{equation*}
	We solved the dynamical system with the starting points $x^0=(-10,10), z^0=(-10,10)$ and $y^0=(-10,10)$ in the case when $c(t)>0$ and $\tau(t)>0$ and used the Matlab function \texttt{ode15s}.
	Notice that
	\begin{equation*}
	\prox\nolimits_{\tau(t) g}(x)=x-\tau(t)\text{proj}_{[-1,1]^2}\left(\frac{1}{\tau(t)}x\right)
	\end{equation*}
	where $\text{proj}_{Q}$ is the projection operator on a convex and closed set $Q \subseteq \HH$.
	To assure the convergence of the algorithm we will prove later in Theorem \ref{PAMADS:th:convergence} that it has to be fulfilled for an $\epsilon>0$ that $c(t)<\frac{\sigma}{\|A\|^2}-\epsilon$ for all $t$, where $\sigma$ is the strong convexity parameter of $f(x)$ (here $\sigma=1$), and that $c(t)$ is monotonically decreasing and Lipschitz continuous. For $c(t)=c$ constant, we can choose $c$ such that $c<\frac{2\sigma}{\|A\|^2}-\epsilon$. Besides, it has to be fulfilled that $M_2(t)$ is monotonically decreasing, locally absolutely continuous, positive definite and $\sup_{t\geq 0}\|\dot{M}_2(t)\|<+\infty$ (we are in setting 1. of Theorem \ref{PAMADS:th:convergence}, see also Corollary \ref{PAMADS:co:convergence_special_case} and Remark \ref{rem-ex}).
	
	To guarantee that $M_2(t)$ is positive definite we have to choose $\tau(t)$ such that $\tau(t) c(t)\|B\|^2 < 1$.
	Since $\|A\|^2=1$ and $\|B\|^2=1$ we can choose $c(t) \in  (\epsilon,1-\epsilon)$ and for $c$ constant $c \in (\epsilon,2-\epsilon)$ and $\tau(t) c(t)<1$.
	We considered for $c(t)$ two constant choices, namely, $c(t)=0.25$ and $1.99$ and two variable choices $c_1(t)=\frac{1}{t^2+1.1}+0.01$ 
	and $c_2(t)=\frac{1}{\sqrt{t+1.1}}+0.01$. Furthermore, we chose $\tau(t)c(t)=0.25$ and $0.99$. These parameters fulfill the conditions above. In Figure 1 and 2 below one can see that independent of the choice of $c(t)$  all three trajectories converge faster for a greater value of  $\tau(t)c(t)$. Furthermore one can see that for smaller values of $c(t)$ the trajectories converge faster.\\

\begin{figure}\label{fig1}
	\subfloat{\includegraphics[width=0.33\linewidth]{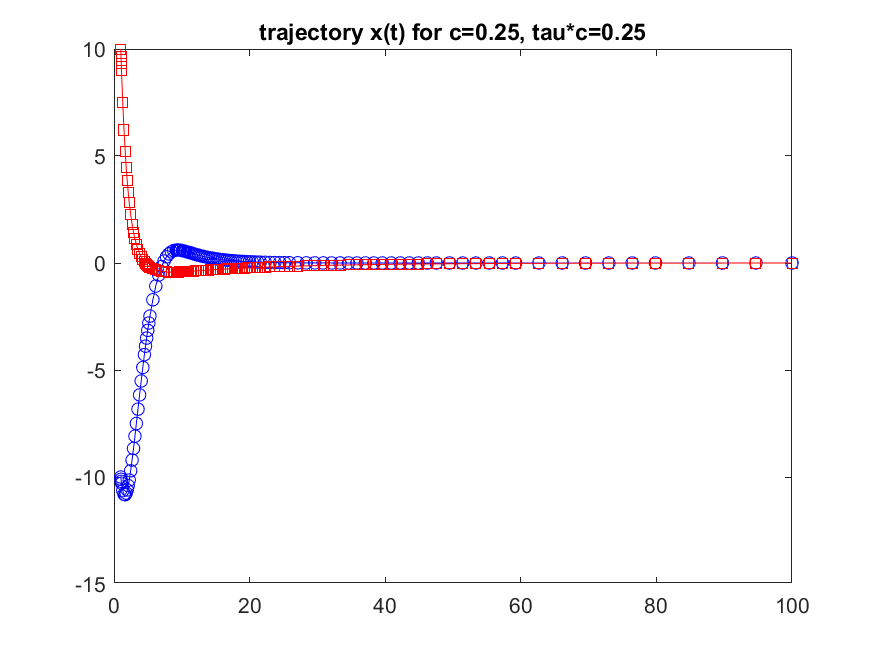}}
	\subfloat{\includegraphics[width=0.33\linewidth]{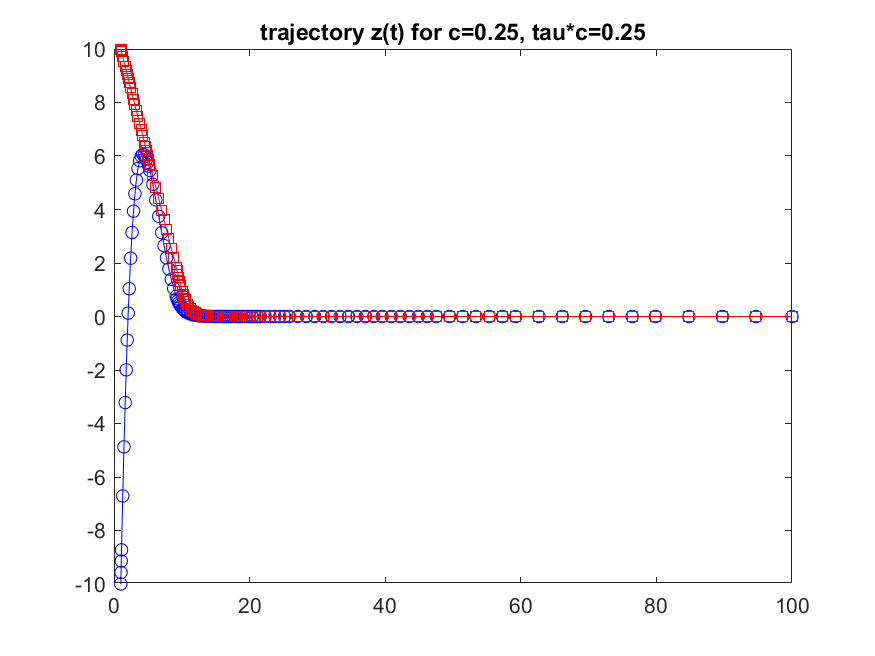}}
	\subfloat{\includegraphics[width=0.33\linewidth]{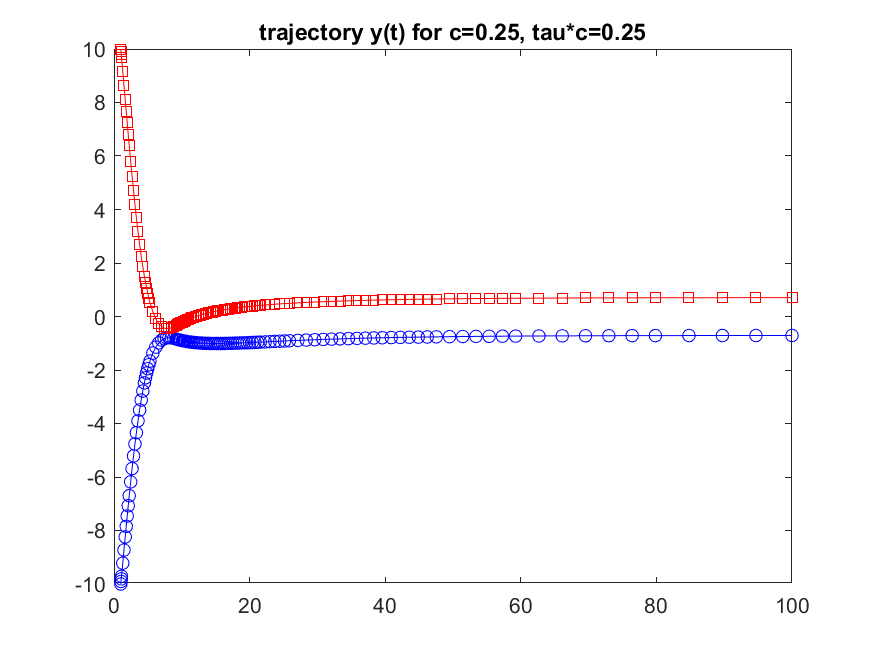}}\\
	\subfloat{\includegraphics[width=0.33\linewidth]{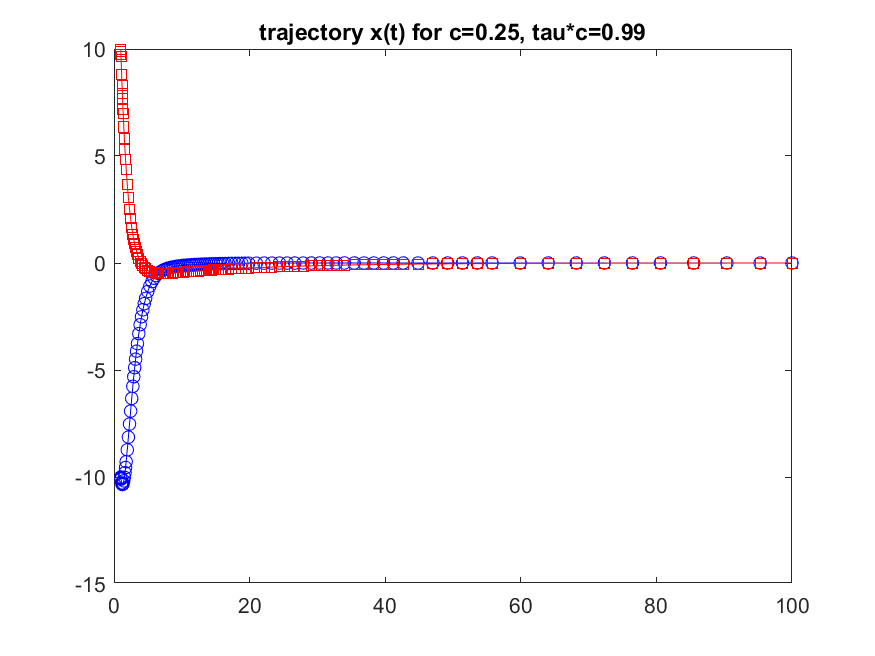}}
	\subfloat{\includegraphics[width=0.33\linewidth]{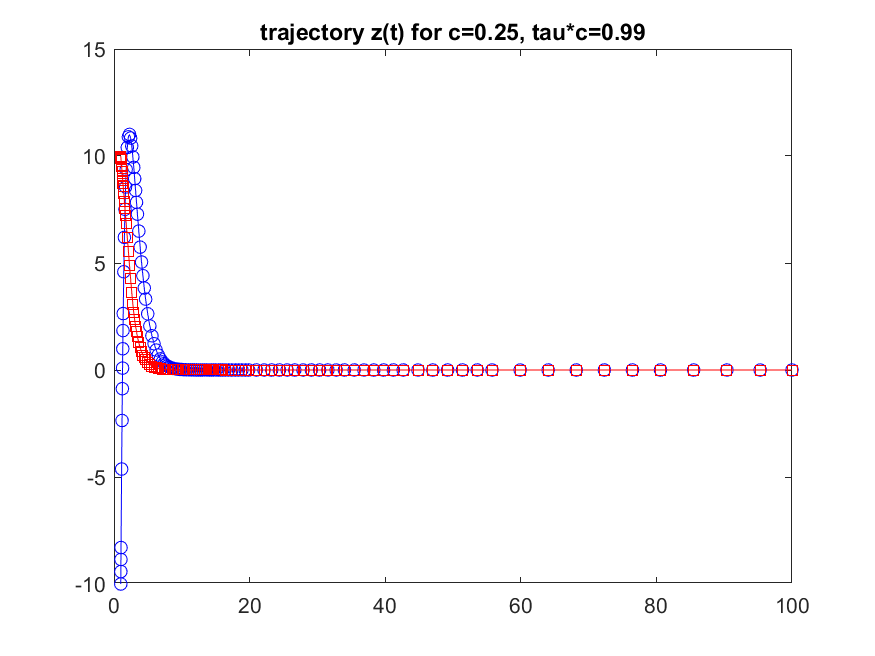}}
	\subfloat{\includegraphics[width=0.33\linewidth]{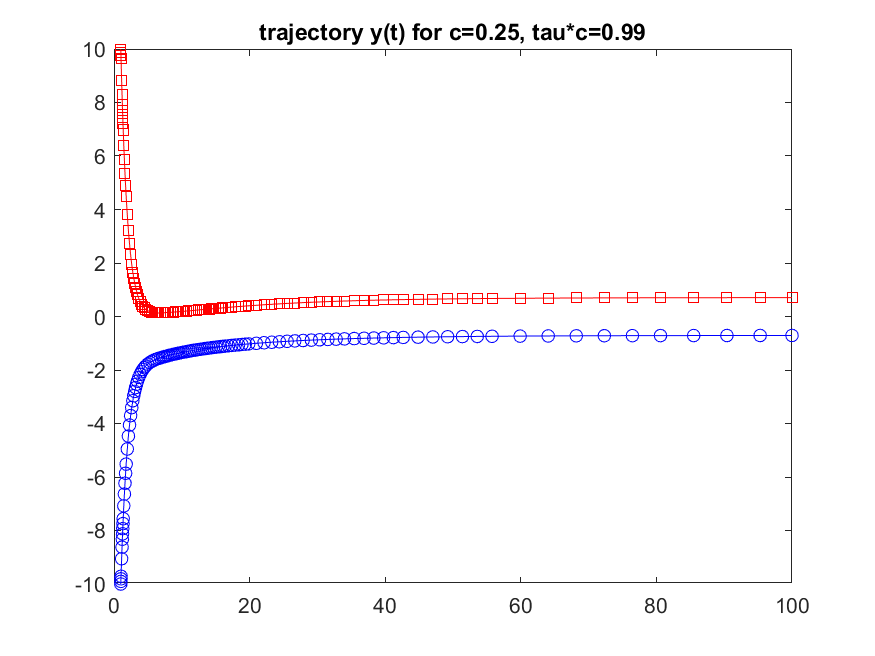}}\\
	\subfloat{\includegraphics[width=0.33\linewidth]{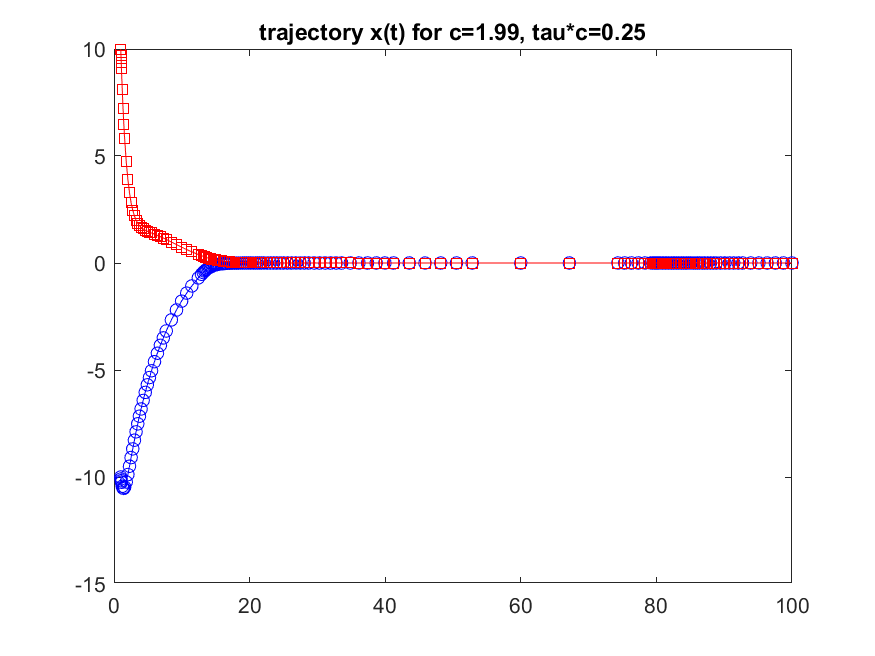}}
	\subfloat{\includegraphics[width=0.33\linewidth]{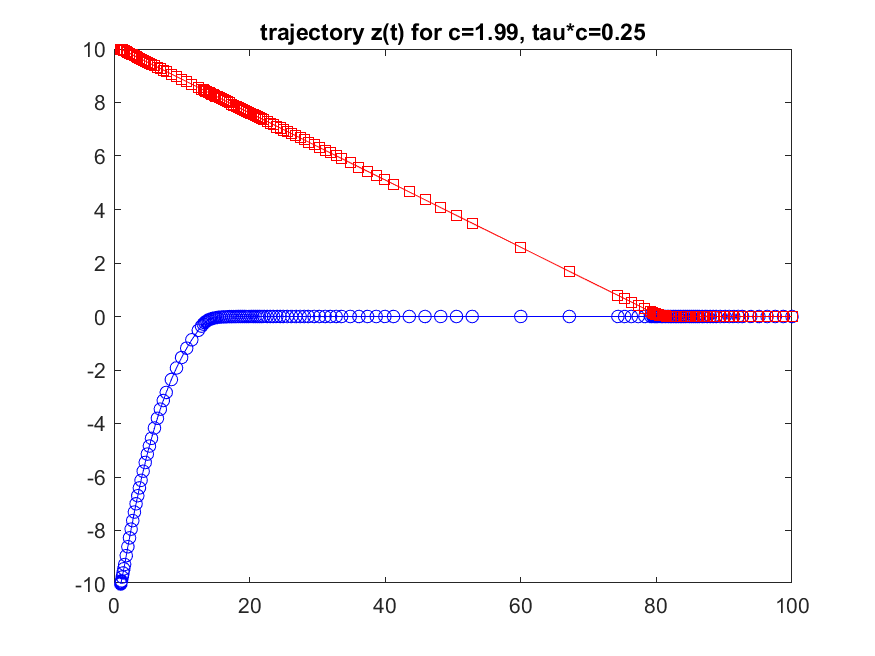}}
	\subfloat{\includegraphics[width=0.33\linewidth]{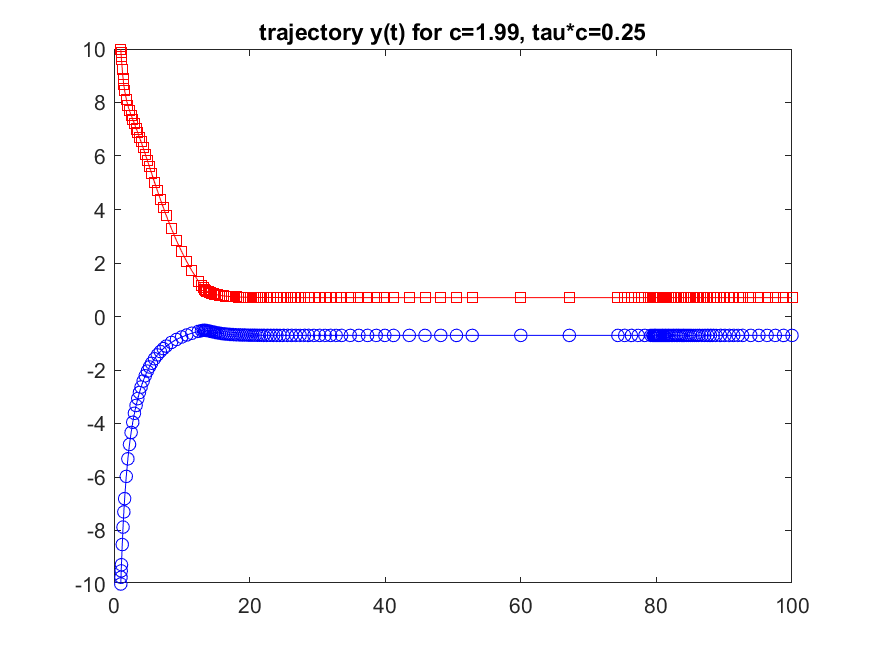}}\\
	\subfloat{\includegraphics[width=0.33\linewidth]{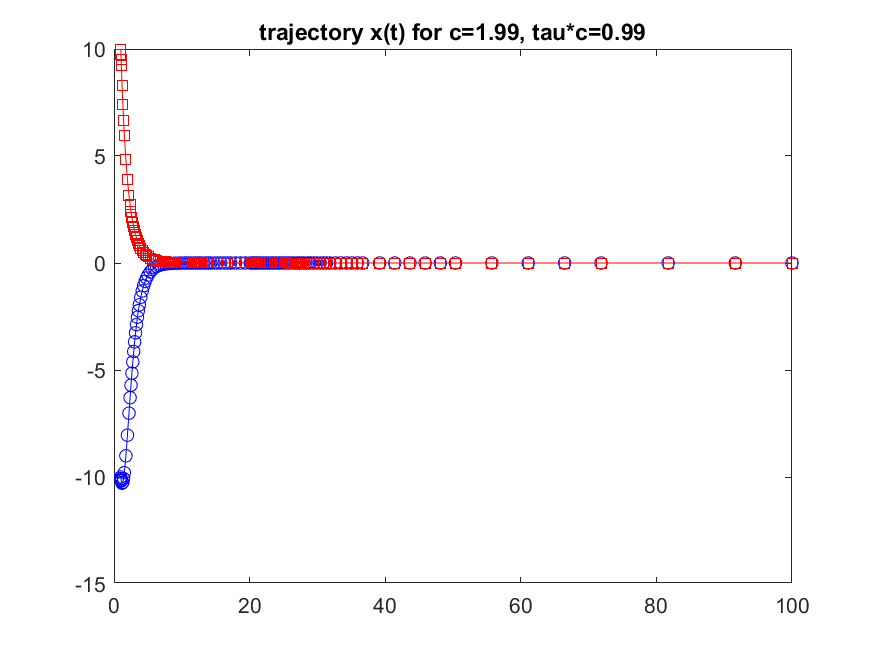}}
	\subfloat{\includegraphics[width=0.33\linewidth]{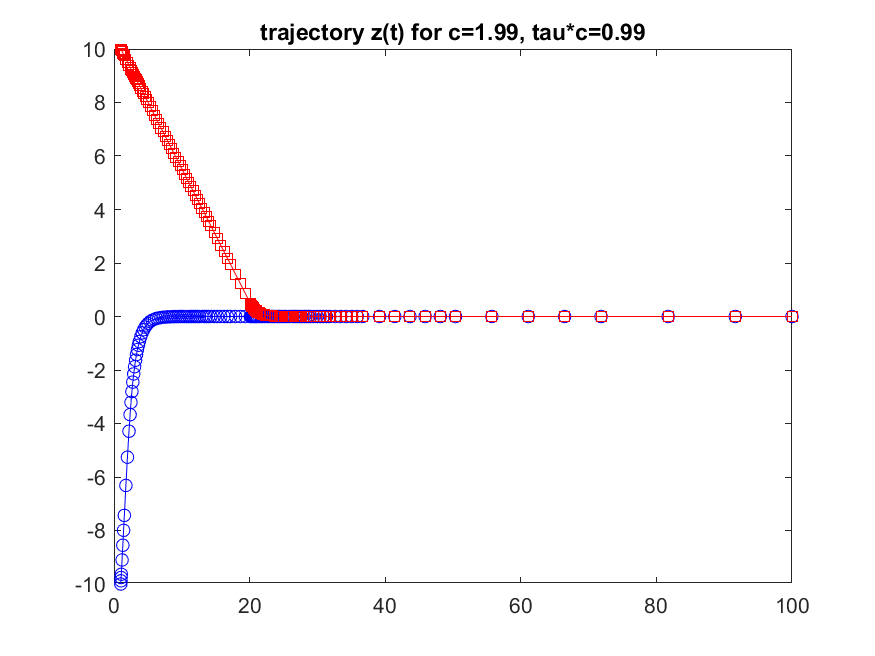}}
	\subfloat{\includegraphics[width=0.33\linewidth]{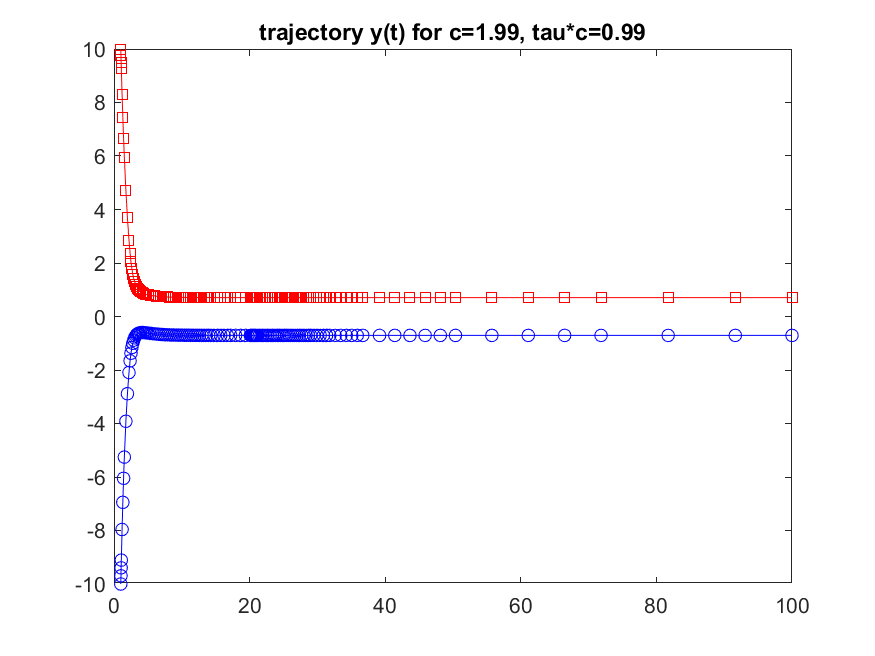}}
	\caption{First and second column: the primal trajectories $x(t)$ and $z(t)$ converge to the primal optimal solution $(0,0)$ for constant $c(t)$ and starting point $(-10,10)$. Third column: the dual trajectory $y(t)$ converges to the dual optimal solution $(0.7071,-0.7071)$ for constant $c(t)$ and starting point $(-10,10)$. }
\end{figure}

\begin{figure}\label{fig2}
	\subfloat{\includegraphics[width=0.33\linewidth]{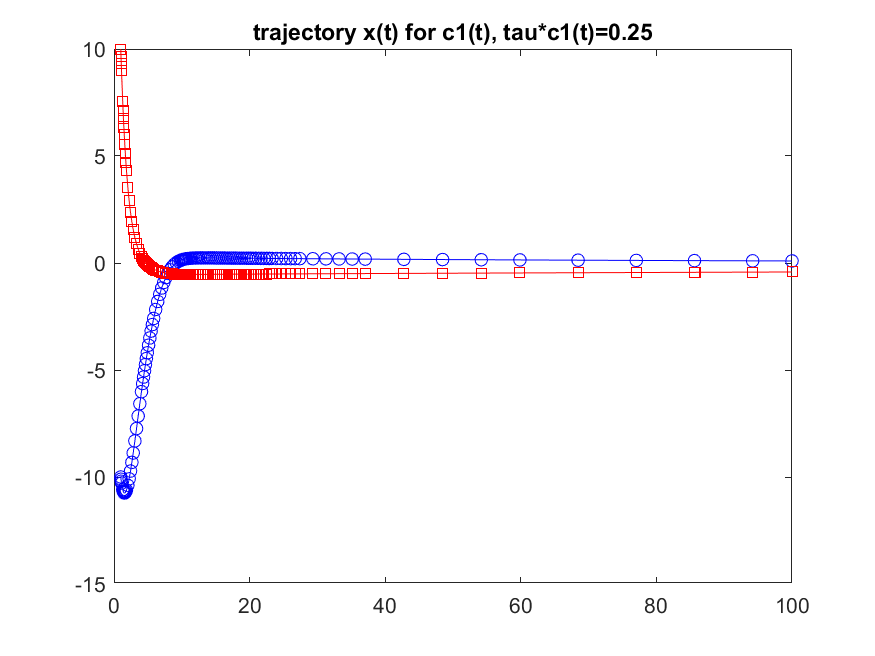}}
	\subfloat{\includegraphics[width=0.33\linewidth]{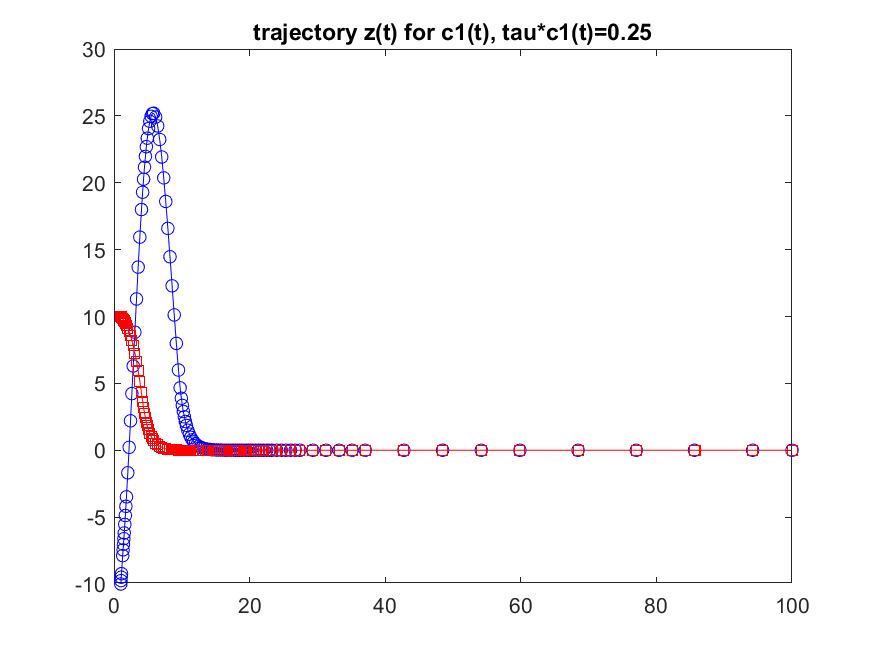}}
	\subfloat{\includegraphics[width=0.33\linewidth]{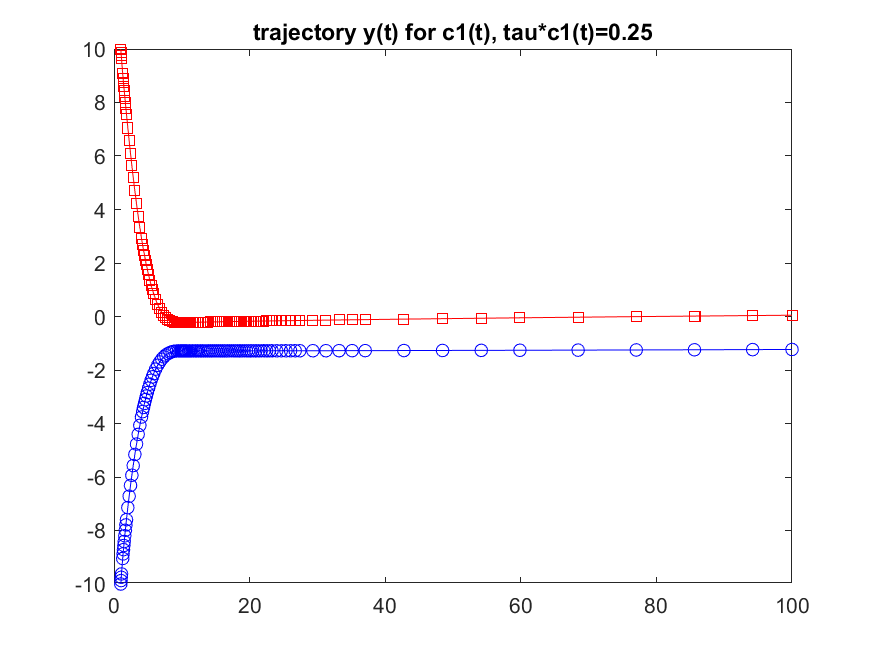}}\\
	\subfloat{\includegraphics[width=0.33\linewidth]{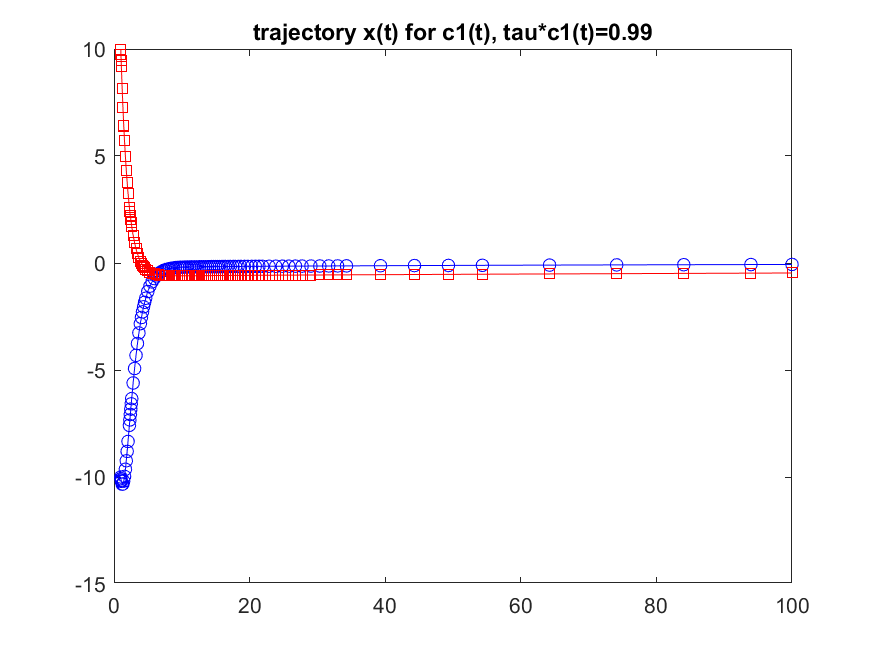}}
	\subfloat{\includegraphics[width=0.33\linewidth]{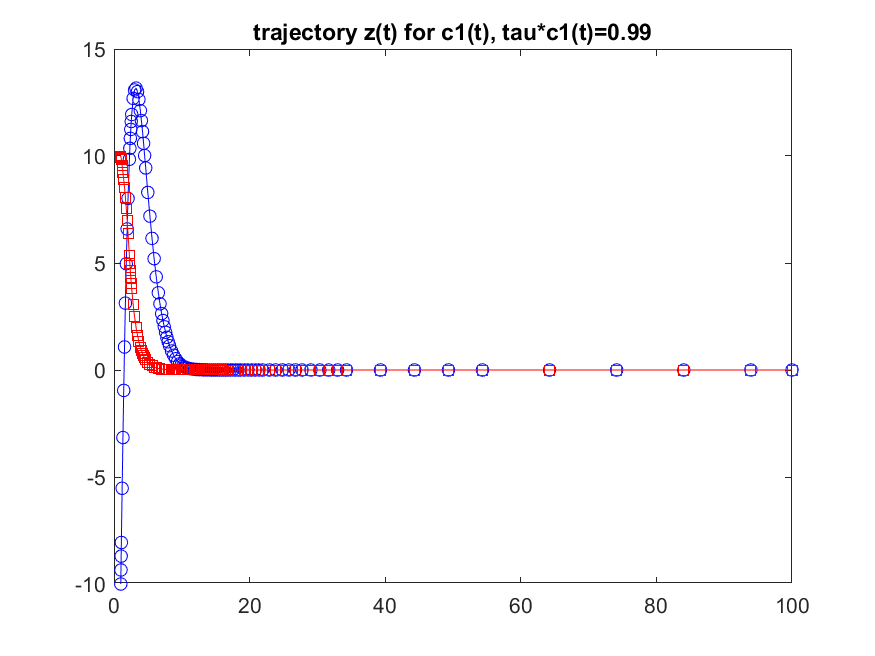}}
	\subfloat{\includegraphics[width=0.33\linewidth]{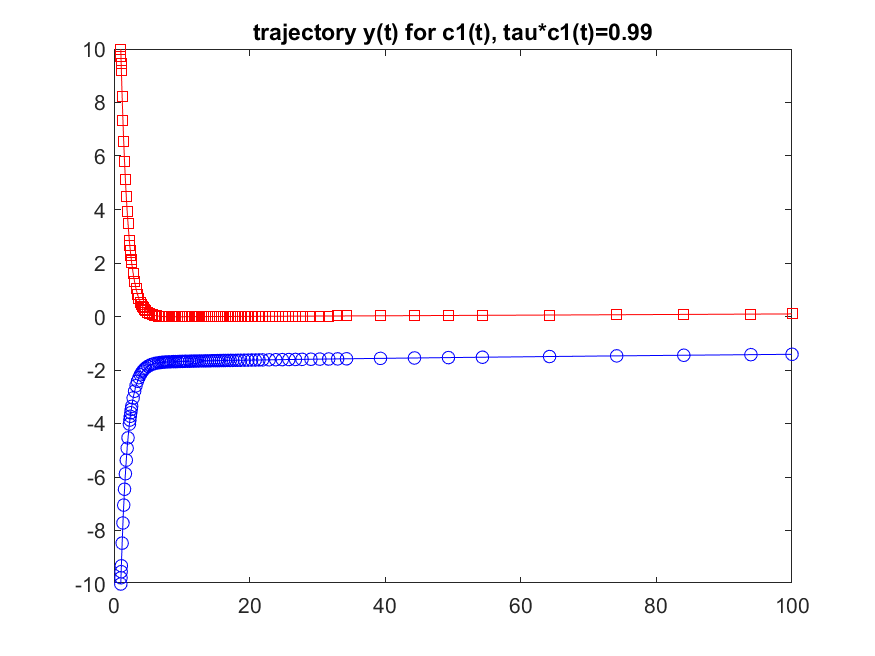}}\\
	\subfloat{\includegraphics[width=0.33\linewidth]{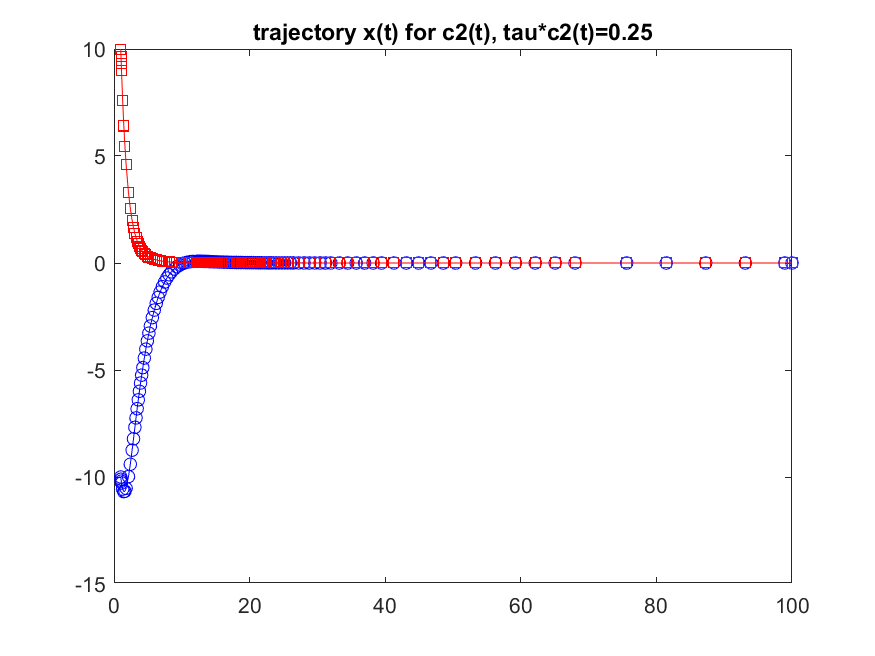}}
	\subfloat{\includegraphics[width=0.33\linewidth]{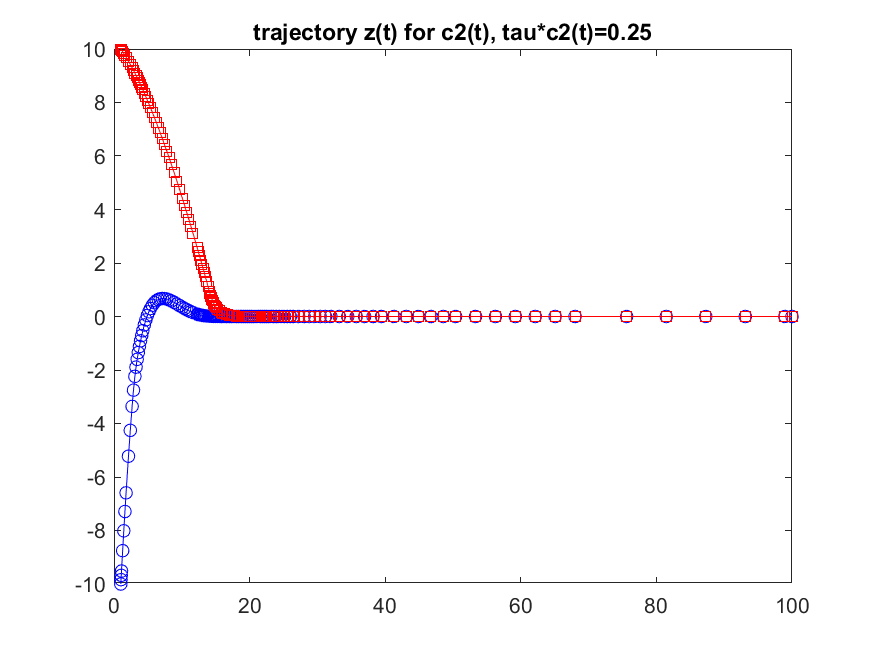}}
	\subfloat{\includegraphics[width=0.33\linewidth]{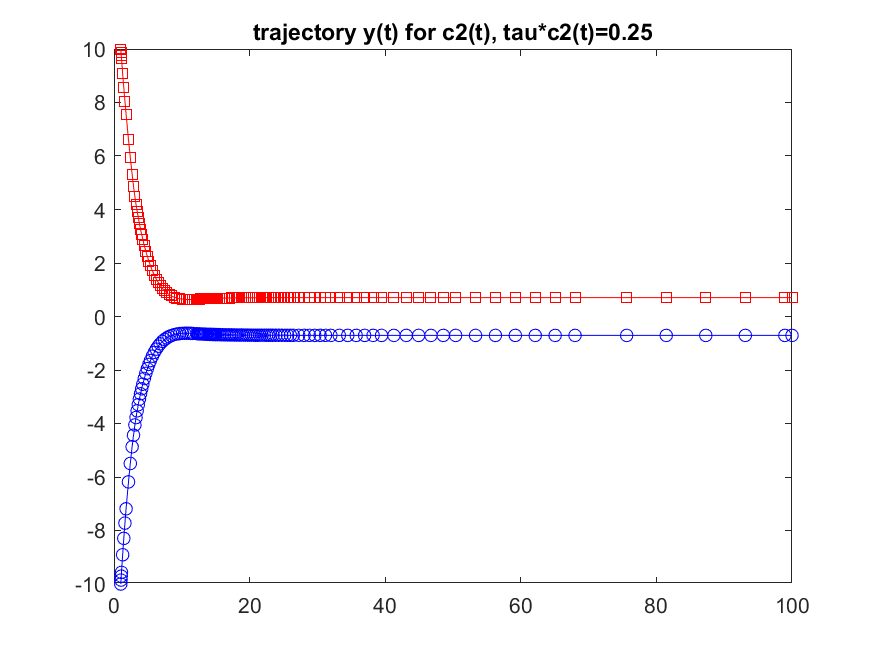}}\\
	\subfloat{\includegraphics[width=0.33\linewidth]{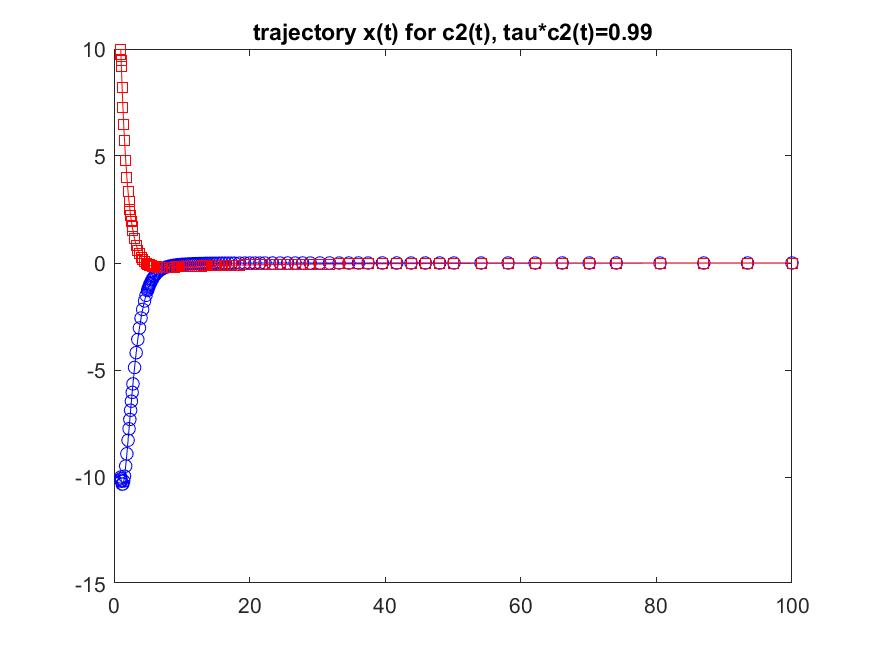}}
	\subfloat{\includegraphics[width=0.33\linewidth]{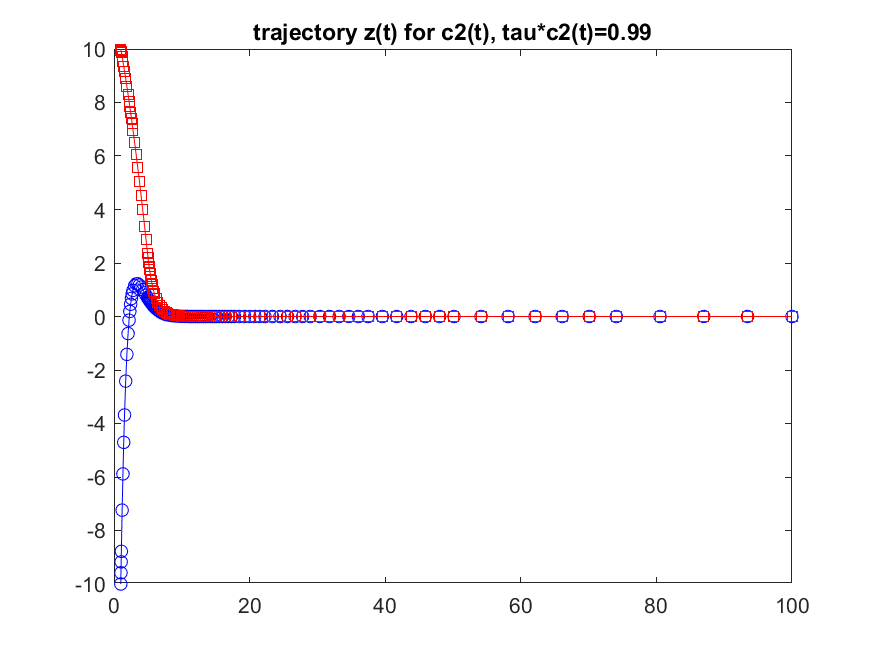}}
	\subfloat{\includegraphics[width=0.33\linewidth]{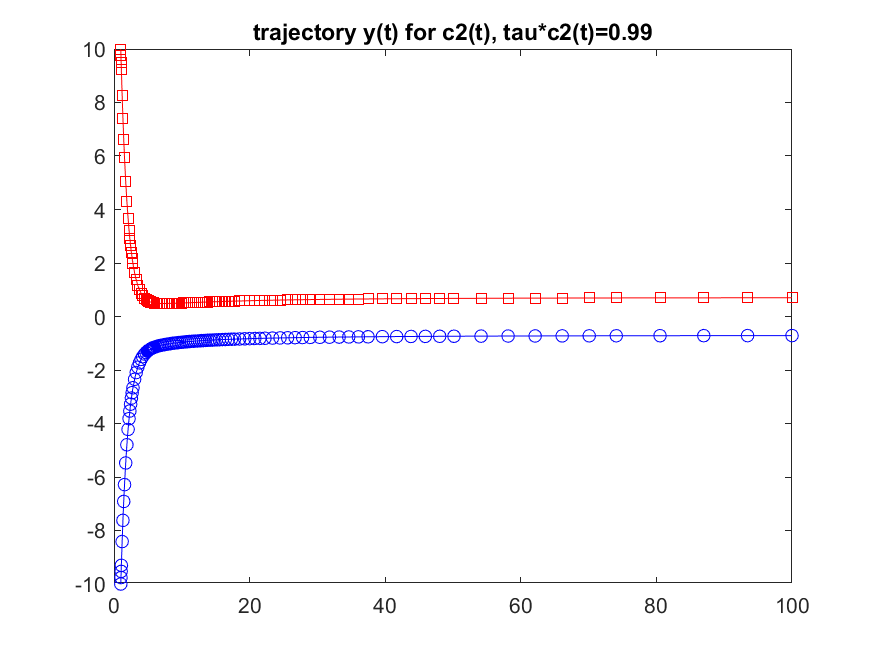}}
	\caption{First and second column: the primal trajectories $x(t)$ and $z(t)$ converge to the primal optimal solution $(0,0)$ for variable $c(t)$ and starting point $(-10,10)$. Third column: the dual trajectory $y(t)$ converges to the dual optimal solution $(0.7071,-0.7071)$ for variable $c(t)$ and starting point $(-10,10)$. }
\end{figure}
	
\end{example}

\section{Existence and uniqueness of the trajectories}
In this section we will investigate the existence and uniqueness of the trajectories generated by the dynamical system \eqref{eq:proxAMA-DS}. We need several preparatory results in order to show that we are in the setting
of the Cauchy-Lipschitz-Picard Theorem.

\begin{lemma}\label{PAMADS_lemma:K_tJ_t}
	 Assume that (\textit{Cweak}) holds. Let $t \in [0, +\infty)$. Then the operator
	\begin{equation*}
	K_t: \HH \to \HH, \quad K_t(u)=\argmin_{x \in \HH}\left(F(t,x)+\frac{1}{2}\|x-u\|^2\right)
	\end{equation*}
	is $\frac{1}{\sigma}$-Lipschitz continuous and the operator
	\begin{equation*}
	J_t: \GG \to \GG, \quad J_t(v)=\argmin_{z \in \GG}\left(G(t,z)+\frac{c(t)}{2}\|z-v\|^2\right)
	\end{equation*}
	is $\frac{c(t)}{\beta(t)}$-Lipschitz continuous.
\end{lemma}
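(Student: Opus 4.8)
The plan is to identify each of the two argmin operators with the inverse of a strongly monotone operator, and then invoke the standard fact that the inverse of an $\alpha$-strongly monotone operator is single-valued and $\tfrac1\alpha$-Lipschitz. First I would rewrite the objective defining $K_t$: substituting $F(t,x)=f(x)-\tfrac12\|x\|^2+\tfrac12\|x\|^2_{M_1(t)}$ and expanding $\tfrac12\|x-u\|^2=\tfrac12\|x\|^2-\langle x,u\rangle+\tfrac12\|u\|^2$, the terms $\mp\tfrac12\|x\|^2$ cancel, leaving
\[
F(t,x)+\tfrac12\|x-u\|^2 = f(x)+\tfrac12\|x\|^2_{M_1(t)}-\langle x,u\rangle+\tfrac12\|u\|^2.
\]
Since $f$ is $\sigma$-strongly convex and $x\mapsto\tfrac12\|x\|^2_{M_1(t)}$ is convex and Fr\'echet differentiable with gradient $M_1(t)x$, this functional is proper, lower semicontinuous and strongly convex, so its minimizer is unique (as already noted in Remark \ref{re:proxAMA-DS_uvw}). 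Fermat's rule together with the subdifferential sum rule (no qualification condition is needed, the quadratic and linear parts being everywhere finite and differentiable) then yields $0\in\partial f(K_t(u))+M_1(t)K_t(u)-u$, that is, $K_t(u)=(\partial f+M_1(t))^{-1}(u)$.

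Next, because $f$ is $\sigma$-strongly convex, $\partial f$ is $\sigma$-strongly monotone, while $M_1(t)\in S_+(\HH)$ is monotone; hence $\partial f+M_1(t)$ is $\sigma$-strongly monotone. For any $\sigma$-strongly monotone $T$, if $u_i\in T(x_i)$ then $\langle u_1-u_2,x_1-x_2\rangle\ge\sigma\|x_1-x_2\|^2$, and Cauchy--Schwarz gives $\|x_1-x_2\|\le\tfrac1\sigma\|u_1-u_2\|$; in particular $T^{-1}$ is single-valued and $\tfrac1\sigma$-Lipschitz. Applied to $T=\partial f+M_1(t)$ this gives the claim for $K_t$.

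The argument for $J_t$ runs in parallel. Expanding $G(t,z)+\tfrac{c(t)}2\|z-v\|^2$ with $G(t,z)=g(z)+\tfrac{c(t)}2(\|Bz\|^2-\|z\|^2)+\tfrac12\|z\|^2_{M_2(t)}$, the $-\tfrac{c(t)}2\|z\|^2$ cancels against the quadratic part of $\tfrac{c(t)}2\|z-v\|^2$, leaving
\[
g(z)+\tfrac12\big\langle z,(c(t)B^*B+M_2(t))z\big\rangle-c(t)\langle z,v\rangle+\tfrac{c(t)}2\|v\|^2.
\]
Fermat's rule gives $c(t)v\in(\partial g+c(t)B^*B+M_2(t))(J_t(v))$, i.e. $J_t(v)=T^{-1}(c(t)v)$ with $T:=\partial g+c(t)B^*B+M_2(t)$. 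Under (\textit{Cweak}) we have $c(t)B^*B+M_2(t)\succcurlyeq\beta(t)\text{Id}$, so this bounded linear part is $\beta(t)$-strongly monotone, and adding the monotone operator $\partial g$ keeps $T$ $\beta(t)$-strongly monotone; hence $T^{-1}$ is $\tfrac1{\beta(t)}$-Lipschitz. Composing with the scaling $v\mapsto c(t)v$ then multiplies the constant by $c(t)$:
\[
\|J_t(v_1)-J_t(v_2)\|=\|T^{-1}(c(t)v_1)-T^{-1}(c(t)v_2)\|\le\tfrac{1}{\beta(t)}\,c(t)\|v_1-v_2\|,
\]
which is the asserted $\tfrac{c(t)}{\beta(t)}$-Lipschitz continuity.

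The only genuinely delicate points are bookkeeping rather than conceptual: checking that the quadratic terms cancel exactly as above so that the correct resolvent emerges, and confirming that the sum rule applies without a constraint qualification (it does, since $\tfrac12\|\cdot\|^2_{M_1(t)}$, the linear terms and $\tfrac{c(t)}2\|B\cdot\|^2$ are real-valued and continuous, so their domain is the whole space). One should also record that the single-valuedness of $K_t$ and $J_t$ -- needed to speak of Lipschitz continuity rather than mere set-valued nonexpansiveness -- follows from strong monotonicity, exactly as the uniqueness of $u$ and $v$ was argued in Remark \ref{re:proxAMA-DS_uvw}.
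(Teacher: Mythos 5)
Your proof is correct and takes essentially the same route as the paper: both derive the optimality inclusion $u\in(\partial f+M_1(t))(K_t(u))$, respectively $c(t)v\in(\partial g+c(t)B^*B+M_2(t))(J_t(v))$, and then obtain the Lipschitz constants from $\sigma$- (resp.\ $\beta(t)$-) strong monotonicity combined with the Cauchy--Schwarz inequality. The only difference is presentational: you quote the single-valuedness and $\tfrac{1}{\alpha}$-Lipschitz continuity of the inverse of an $\alpha$-strongly monotone operator as a standard fact (handling the factor $c(t)$ by composing with the scaling $v\mapsto c(t)v$), whereas the paper runs exactly that argument inline on pairs of points.
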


\begin{proof}
Let $t \in [0, +\infty)$ be fixed. Then we have
\[0 \in \partial\left(f(\cdot)+\frac{1}{2}(\|\cdot\|^2_{M_1(t)}-\|\cdot\|^2)+\frac{1}{2}\|\cdot-u\|^2\right)(K_t(u)).\]
For all $u,v \in \HH$ we obtain
	\[u \in \partial f (K_t(u))+M_1(t)(K_t(u))\]
and \[v \in \partial f (K_t(v))+M_1(t)(K_t(v)).\]
Due to the $\sigma$-strong convexity of $f$ and $M_1(t) \in S_+(\HH)$, it follows that $\partial f+M_1(t)$ is $\sigma$-strongly monotone and we get
\[\sigma\|K_tu-K_tv\|^2\leq \langle u-v, K_t(u)-K_t(v) \rangle.\]
Using the Cauchy-Schwarz inequality it follows
\[\|K_tu-K_tv\| \leq \frac{1}{\sigma}\|u-v\|,\]
which means that $K_t$ is $\frac{1}{\sigma}$-Lipschitz continuous.

For $t \in [0, +\infty)$ fixed we have
\[0 \in \partial\left(g(\cdot)+\frac{c(t)}{2}(\|B\cdot\|^2-\|\cdot\|^2)+\frac{1}{2}\|\cdot\|^2_{M_2(t)}+\frac{c(t)}{2}\|\cdot-u\|^2\right)(J_t(u)).\]	
For all $u,v \in \GG$ we obtain
\[c(t)u \in \partial g (J_t(u))+(c(t)B^*B+M_2(t))(J_t(u))\]
and \[c(t)v \in \partial g (J_t(v))+(c(t)B^*B+M_2(t))(J_t(v)).\]
Because of (\textit{Cweak}), we have that $\partial g+c(t)B^*B+M_2(t)$ is $\beta(t)$-strongly monotone and we get
\[\beta(t)\|J_tu-J_tv\|^2\leq c(t)\langle u-v, J_t(u)-J_t(v) \rangle.\]
Using the Cauchy-Schwarz inequality it follows
\[\|J_tu-J_tv\| \leq \frac{c(t)}{\beta(t)}\|u-v\|,\]
which means that $J_t$ is $\frac{c(t)}{\beta(t)}$-Lipschitz continuous.
\end{proof}	

\begin{lemma}\label{PAMADS_lemma:RQ}
	Assume that (\textit{Cweak}) holds. Let be $(x,z,y) \in \HH \times \GG \times \mathcal{K}$ and the maps\\ $R_{(x,z,y)}:[0, +\infty) \to \HH$,
	\[R_{(x,z,y)}(t)=\argmin_{u \in \HH}\left\{F(t,u)
	+\frac{1}{2}\|u-(M_1(t)x+A^*y-\nabla h_1(x))\|^2\right\}-x,\]
	$Q_{(x,z,y)}:[0, +\infty) \to \GG$,
	\begin{align*}
	Q_{(x,z,y)}(t)=\argmin_{v \in \GG}&\left\{G(t,v)
	+\frac{c(t)}{2}\left\|v-\left(\frac{1}{c(t)}M_2(t)z+\frac{1}{c(t)}B^*y
	-B^*A(R_{(x,z,y)}(t)+x)\right.\right.\right.\\
	&\left.\left.\left.+B^*b-\frac{1}{c(t)}\nabla h_2(z))\right)\right\|^2\right\}-z,
	\end{align*}
	and $P_{(x,z,y)}:[0, +\infty) \to \mathcal{K}$,
	\begin{equation*}
	P_{(x,z,y)}(t)=c(t)(b-A(R_{(x,z,y)}(t)+x)-B(Q_{(x,z,y)}(t)+z)).
	\end{equation*}
	Then the following holds for every $t,r \in [0,+\infty):$
	\begin{align*}
		\text{(i)}&&\|R_{(x,z,y)}(t)-R_{(x,z,y)}(r)\| \leq & \frac{\|R_{(x,z,y)}(r)\|}{\sigma}\|M_1(t)-M_1(r)\|\\
		\text{(ii)}&&\|Q_{(x,z,y)}(t)-Q_{(x,z,y)}(r)\| \leq & \frac{c(t)\|A\|\|B\|\|R_{(x,z,y)}(r)\|}{\sigma\beta(t)}\|M_1(t)-M_1(r)\|\\
		&&&+\frac{\|Q_{(x,z,y)}(r)\|}{\beta(t)}\|M_2(t)-M_2(r)\|+\frac{\|P_{(x,z,y)}(r)\|\cdot\|B\|}{\beta(t)c(r)}|c(t)-c(r)|.
	\end{align*}
\end{lemma}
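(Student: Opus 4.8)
The plan is to exploit the strong monotonicity of the operators $\partial f + M_1(t)$ (with constant $\sigma$) and $\partial g + c(t)B^*B + M_2(t)$ (with constant $\beta(t)$) that already underlies Lemma \ref{PAMADS_lemma:K_tJ_t}. The point is that $w(t):=R_{(x,z,y)}(t)+x$ and $s(t):=Q_{(x,z,y)}(t)+z$ each solve a strongly monotone inclusion whose defining data depend on $t$ only through $M_1(t)$, $M_2(t)$ and $c(t)$; writing the inclusions at two times $t$ and $r$ and subtracting reduces each Lipschitz estimate to controlling how this data changes. Throughout I abbreviate the three maps as $R(t)$, $Q(t)$, $P(t)$.

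For (i), the optimality condition for $w(t)=K_t(M_1(t)x+A^*y-\nabla h_1(x))$ reads, by the definition of $F$, $M_1(t)x+A^*y-\nabla h_1(x)\in\partial f(w(t))+M_1(t)w(t)$, and analogously at $r$. Subtracting and isolating the difference $\xi-\xi'$ of subgradients (using $w(t)-x=R(t)$) gives $\xi-\xi'=M_1(r)R(r)-M_1(t)R(t)=-M_1(t)(R(t)-R(r))-(M_1(t)-M_1(r))R(r)$. Pairing with $R(t)-R(r)$, invoking $\sigma$-strong monotonicity of $\partial f$ on the left, discarding the nonnegative term $\langle M_1(t)(R(t)-R(r)),R(t)-R(r)\rangle$ coming from $M_1(t)\in S_+(\HH)$, and finishing with Cauchy-Schwarz together with $\|(M_1(t)-M_1(r))R(r)\|\le\|M_1(t)-M_1(r)\|\,\|R(r)\|$ yields the claim after dividing by $\|R(t)-R(r)\|$.

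For (ii) I argue similarly with $s(t)$, whose optimality condition (matching \eqref{PAMADS:OC_DS_2}) is $\rho(t)\in\partial g(s(t))+(c(t)B^*B+M_2(t))s(t)$, where $\rho(t):=M_2(t)z+B^*y-c(t)B^*A(R(t)+x)+c(t)B^*b-\nabla h_2(z)$. The key step is to recast the inclusion at time $r$ over the operator at time $t$: moving $[(c(t)B^*B+M_2(t))-(c(r)B^*B+M_2(r))]s(r)$ to the left converts $\rho(r)\in\partial g(s(r))+(c(r)B^*B+M_2(r))s(r)$ into $\tilde\rho(r)\in\partial g(s(r))+(c(t)B^*B+M_2(t))s(r)$, so both inclusions live over the same $\beta(t)$-strongly monotone operator. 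Strong monotonicity and Cauchy-Schwarz then give $\beta(t)\|Q(t)-Q(r)\|\le\|\rho(t)-\tilde\rho(r)\|$, and it remains to estimate the right-hand side.

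The remaining work is algebraic bookkeeping on $\rho(t)-\tilde\rho(r)$. The $M_2$-contributions combine as $(M_2(t)-M_2(r))(z-s(r))=-(M_2(t)-M_2(r))Q(r)$, producing the middle term. For the $c$-dependent terms I collect everything under $B^*$ and recognize the combinations $b-A(R(\cdot)+x)-B(Q(r)+z)$; setting $D_r:=b-A(R(r)+x)-B(Q(r)+z)=P(r)/c(r)$ and using $D_t=D_r-A(R(t)-R(r))$, these terms collapse to $\frac{c(t)-c(r)}{c(r)}B^*P(r)-c(t)B^*A(R(t)-R(r))$. Taking norms gives the third term directly, while the residual $c(t)\|A\|\,\|B\|\,\|R(t)-R(r)\|$ is bounded by feeding in part (i); dividing by $\beta(t)$ reproduces the stated inequality. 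I expect the main obstacle to be precisely this regrouping: one must see that the operator-difference correction $\tilde\rho(r)-\rho(r)$ merges with the $c$-weighted residual to reconstruct $P(r)$ exactly, and that the awkward $t$-dependence of $R(t)$ is absorbed at the very end through (i).
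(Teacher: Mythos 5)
Your proof is correct and follows essentially the same route as the paper's: both recast the inclusion at time $r$ over the strongly monotone operator at time $t$, apply strong monotonicity plus Cauchy--Schwarz, and regroup the $c$-dependent terms via $P_{(x,z,y)}(r)/c(r)$ before feeding in part (i). The only cosmetic difference is that in (i) you keep $\partial f$ as the strongly monotone part and discard the positive-semidefinite $M_1(t)$ quadratic term, whereas the paper works directly with the $\sigma$-strongly monotone operator $\partial f + M_1(t)$ --- the algebra is identical.
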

\begin{proof}
	Let $t,r \in [0,+\infty)$ be fixed.
	
		(i) From the definition of $R_{(x,z,y)}$ we have
		\begin{equation}\label{PAMADS:eq_lemma_RQ_partial1}
		M_1(t)x+A^*y-\nabla h_1(x) \in
		 \partial f(R_{(x,z,y)}(t)+x)+M_1(t)(R_{(x,z,y)}(t)+x)
		\end{equation}
		and
		\[M_1(r)x+A^*y-\nabla h_1(x) \in \partial f(R_{(x,z,y)}(r)+x)+M_1(r)(R_{(x,z,y)}(r)+x).\]
		If we add $M_1(t)(R_{(x,z,y)}(r)+x)$ on both sides of the relation above, we obtain
		\begin{align}
		M_1(t)(R_{(x,z,y)}(r)+x)-M_1(r)(R_{(x,z,y)}(r))+A^*y-\nabla h_1(x) \in \nonumber\\
		\partial f(R_{(x,z,y)}(r)+x)+M_1(t)(R_{(x,z,y)}(r)+x).\label{PAMADS:eq_lemma_RQ_partial2}
		\end{align}
		From \eqref{PAMADS:eq_lemma_RQ_partial1} and \eqref{PAMADS:eq_lemma_RQ_partial2} and using that $\partial f + M_1(t)$ is $\sigma$-strongly monotone, we have
		\[
		\langle M_1(t)R_{(x,z,y)}(r)-M_1(r)R_{(x,z,y)}(r),R_{(x,z,y)}(r)-R_{(x,z,y)}(t)\rangle \geq \sigma\|R_{(x,z,y)}(r)-R_{(x,z,y)}(t)\|^2.
		\]
		The result follows from the Cauchy-Schwarz inequality.
	
		(ii) From the definition of $Q_{(x,z,y)}$ we have
		\begin{align}
		M_2(t)z+B^*y-c(t)B^*A(R_{(x,z,y)}(t)+x)+c(t)B^*b-\nabla h_2(z) \in \nonumber\\
		\partial g(Q_{(x,z,y)}(t)+z)+(c(t)B^*B+M_2(t))(Q_{(x,z,y)}(t)+z)\label{PAMADS:eq_lemma_RQ_partial3}
		\end{align}
		and
		\begin{align*}
		M_2(r)z+B^*y-c(r)B^*A(R_{(x,z,y)}(r)+x)+c(r)B^*b-\nabla h_2(z) \in\\
		 \partial g(Q_{(x,z,y)}(r)+z)+(c(r)B^*B+M_2(r))(Q_{(x,z,y)}(r)+z).
		\end{align*}
		If we add $(c(t)-c(r))B^*B(Q_{(x,z,y)}(r)+z)+M_2(t)(Q_{(x,z,y)}(r)+z)$ on both sides of the relation above, we obtain
		\begin{align}
		(c(t)-c(r))B^*B(Q_{(x,z,y)}(r)+z)-M_2(r)Q_{(x,z,y)}(r)+M_2(t)(Q_{(x,z,y)}(r)+z)+B^*y&\nonumber\\
		-c(r)B^*A(R_{(x,z,y)}(r)+x)+c(r)B^*b-\nabla h_2(z) &\in \nonumber\\
		\partial g(Q_{(x,z,y)}(r)+z)+(c(t)B^*B+M_2(t))(Q_{(x,z,y)}(r)+z).&\label{PAMADS:eq_lemma_RQ_partial4}
		\end{align}
		From \eqref{PAMADS:eq_lemma_RQ_partial3} and \eqref{PAMADS:eq_lemma_RQ_partial4} and using that $\partial g + c(t)B^*B + M_2(t)$ is $\beta(t)$-strongly monotone, we have
		\begin{align*}
		\langle (c(t)-c(r))B^*B(Q_{(x,z,y)}(r)+z)+(M_2(t)-M_2(r))Q_{(x,z,y)}(r)+c(t)B^*AR_{(x,z,y)}(t)\\
		-c(r)B^*AR_{(x,z,y)}(r)+(c(t)-c(r))B^*Ax-(c(t)-c(r))B^*b,Q_{(x,z,y)}(r)-Q_{(x,z,y)}(t)\rangle \\
		\geq \beta(t)\|Q_{(x,z,y)}(r)-Q_{(x,z,y)}(t)\|^2.
		\end{align*}
		From the Cauchy-Schwarz inequality, the definition of $P_{(x,z,y)}$ and (i) it follows
		\begin{align*}
		\|Q_{(x,z,y)}(r)&-Q_{(x,z,y)}(t)\|\\&\leq \frac{1}{\beta(t)}\|(c(t)-c(r))B^*B(Q_{(x,z,y)}(r)+z)+(M_2(t)-M_2(r))Q_{(x,z,y)}(r)\\
		&\quad+c(t)B^*AR_{(x,z,y)}(t)-c(r)B^*AR_{(x,z,y)}(r)+(c(t)-c(r))B^*Ax-(c(t)-c(r))B^*b\|\\
		&=\frac{1}{\beta(t)}\|(c(t)-c(r))B^*BQ_{(x,z,y)}(r)+c(t)B^*AR_{(x,z,y)}(t)-c(r)B^*AR_{(x,z,y)}(r)\\
		&\quad+(c(t)-c(r))B^*(Ax+Bz-b)+(M_2(t)-M_2(r))Q_{(x,z,y)}(r)\|\\
		&=\frac{1}{\beta(t)}\|(c(t)-c(r))B^*BQ_{(x,z,y)}(r)+c(t)B^*AR_{(x,z,y)}(t)-c(r)B^*AR_{(x,z,y)}(r)\\
		&\quad+(c(t)-c(r))B^*(-BQ_{(x,z,y)}(r)-AR_{(x,z,y)}(r)-\frac{1}{c(r)}P_{(x,z,y)}(r))\\
		&\quad+(M_2(t)-M_2(r))Q_{(x,z,y)}(r)\|\\
		&=\frac{1}{\beta(t)}\left\|c(t)B^*AR_{(x,z,y)}(t)-c(t)B^*AR_{(x,z,y)}(r)-\frac{(c(t)-c(r))}{c(r)}B^*P_{(x,z,y)}(r)\right.\\
		&\quad\left.+(M_2(t)-M_2(r))Q_{(x,z,y)}(r)\right\|\\
		&\leq\frac{1}{\beta(t)}\left( c(t)\|A\|\|B\|\|R_{(x,z,y)}(t)-R_{(x,z,y)}(r)\|+\frac{|c(t)-c(r)|}{c(r)}\|B\|\|P_{(x,z,y)}(r)\|\right.\\
		&\left.\quad+\|M_2(t)-M_2(r)\|\|Q_{(x,z,y)}(r)\|\right)\\
		&\leq\frac{1}{\beta(t)}\left( \frac{c(t)\|A\|\|B\|\|R_{(x,z,y)}(r)\|}{\sigma}\|M_1(t)-M_1(r)\|+\frac{|c(t)-c(r)|}{c(r)}\|B\|\|P_{(x,z,y)}(r)\|\right.\\
		&\left.\quad+\|M_2(t)-M_2(r)\|\|Q_{(x,z,y)}(r)\|\right).\\
		\end{align*}
\end{proof}

Having now all these estimations at our disposal, we are now ready to prove the existence and
uniqueness of the trajectories.

\begin{theorem}
	Assume that (\textit{Cstrong}) holds, $M_1 \in L_{loc}^1([0,+\infty),\HH)$ and $M_2 \in L_{loc}^1([0,+\infty),\GG)$. Furthermore we assume that $0<\inf_{t\geq 0} c(t)\leq\sup_{t\geq 0}c(t)<+\infty$. Then, for every starting points $(x^0,z^0,y^0) \to \HH \times \GG \times \mathcal{K}$, the dynamical system \eqref{eq:proxAMA-DS} has a unique strong global solution $(x,z,y):[0,+\infty) \to\HH \times \GG \times \mathcal{K}$.
\end{theorem}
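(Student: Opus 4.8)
The plan is to recast the system in the reduced form $\dot U(t)=\Gamma(t,U(t))$, $U(0)=(x^0,z^0,y^0)$, with $\Gamma$ the map introduced in Remark \ref{re:proxAMA-DS_uvw}, and then to invoke the Cauchy--Lipschitz--Picard theorem in its Carathéodory form, which permits right-hand sides that are merely measurable in $t$. Since (\textit{Cstrong}) implies (\textit{Cweak}) with $\beta(t)\equiv\beta$, the three components $u,v,w$ of $\Gamma(t,\cdot)$ in \eqref{eq:proxAMA-DS_uvw} are well defined and single-valued for every $t$. It then suffices to verify two hypotheses: \textbf{(i)} for a.e.\ $t$ the map $U\mapsto\Gamma(t,U)$ is Lipschitz continuous with a constant $L(t)$ that is locally integrable; and \textbf{(ii)} for each fixed $U$ the map $t\mapsto\Gamma(t,U)$ is measurable, with $t\mapsto\Gamma(t,U^0)$ locally integrable for the initial datum $U^0=(x^0,z^0,y^0)$.

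For \textbf{(i)} I would exploit Lemma \ref{PAMADS_lemma:K_tJ_t}. Writing the first component as $u=K_t\big(M_1(t)x+A^*y-\nabla h_1(x)\big)-x$, the resolvent $K_t$ is $\tfrac{1}{\sigma}$-Lipschitz uniformly in $t$, while the inner affine-plus-gradient map has Lipschitz constant controlled by $\|M_1(t)\|$, $\|A\|$ and $L_{h_1}$; hence the $u$-component is Lipschitz in $(x,z,y)$ with a constant that is affine in $\|M_1(t)\|$. An analogous estimate, now using that $J_t$ is $\tfrac{c(t)}{\beta}$-Lipschitz and that $v$ depends on $(x,z,y)$ also through $u$, bounds the Lipschitz constant of the $v$-component by an expression polynomial in $\|M_2(t)\|$, $c(t)$, $\tfrac{1}{c(t)}$, $\|A\|$, $\|B\|$, $L_{h_2}$ and the bound already obtained for $u$; the $w$-component is plainly Lipschitz with constant controlled by $c(t)$, $\|A\|$, $\|B\|$. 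Composing these yields a global constant $L(t)$ that is polynomial in $\|M_1(t)\|,\|M_2(t)\|,c(t),\tfrac{1}{c(t)}$ with coefficients built from $\|A\|,\|B\|,L_{h_1},L_{h_2},\sigma,\beta$, and since $M_1,M_2\in L^1_{loc}$ while $0<\inf_{t\ge 0}c(t)\le\sup_{t\ge 0}c(t)<+\infty$ bounds both $c(t)$ and $\tfrac{1}{c(t)}$, this $L$ lies in $L^1_{loc}$.

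The main obstacle, and precisely the reason Lemma \ref{PAMADS_lemma:RQ} was proved, is hypothesis \textbf{(ii)}: because the right-hand side is defined implicitly through the time-dependent resolvents $(\partial f+M_1(t))^{-1}$ and $(\partial g+c(t)B^*B+M_2(t))^{-1}$, measurability of $t\mapsto\Gamma(t,U)$ is not transparent. For fixed $U=(x,z,y)$ the components of $\Gamma(t,U)$ are exactly $R_{(x,z,y)}(t)$, $Q_{(x,z,y)}(t)$ and $P_{(x,z,y)}(t)$, and Lemma \ref{PAMADS_lemma:RQ} bounds their $t$-increments by $\|M_1(t)-M_1(r)\|$, $\|M_2(t)-M_2(r)\|$ and $|c(t)-c(r)|$. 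Hence these maps are continuous for the topology on $t$ induced by $M_1,M_2,c$, so their measurability follows from that of $M_1,M_2,c$; taking the estimates with a fixed reference $r$ bounds $\|R_{(x,z,y)}(t)\|,\|Q_{(x,z,y)}(t)\|,\|P_{(x,z,y)}(t)\|$ by locally integrable functions, giving $\Gamma(\cdot,U^0)\in L^1_{loc}$. With \textbf{(i)} and \textbf{(ii)} established, the Cauchy--Lipschitz--Picard theorem yields a unique locally absolutely continuous $U=(x,z,y):[0,+\infty)\to\HH\times\GG\times\mathcal{K}$ solving $\dot U=\Gamma(t,U)$ with $U(0)=U^0$; unwinding the reduction of Remark \ref{re:proxAMA-DS_uvw} shows this is exactly the unique strong global solution of \eqref{eq:proxAMA-DS}.
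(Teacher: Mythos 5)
Your proposal is correct and follows essentially the same route as the paper's proof: the same reduction to $\dot U(t)=\Gamma(t,U(t))$ via Remark \ref{re:proxAMA-DS_uvw}, the same appeal to the Cauchy--Lipschitz--Picard theorem in its Carath\'eodory form, with Lemma \ref{PAMADS_lemma:K_tJ_t} giving the $L^1_{loc}$ state-Lipschitz constant and Lemma \ref{PAMADS_lemma:RQ} handling the time dependence $\Gamma(\cdot,x,z,y)\in L^1_{loc}$. Your only refinement is to make the measurability of $t\mapsto\Gamma(t,U)$ explicit (via continuity of $\Gamma(t,U)$ as a function of $(M_1(t),M_2(t),c(t))$), a point the paper leaves implicit; note also that the Lipschitz constant $L(t)$ is a square root of expressions quadratic in $\|M_1(t)\|,\|M_2(t)\|$, hence of affine growth in these norms, which is exactly why $M_1,M_2\in L^1_{loc}$ suffices.
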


\begin{proof}
	In the following we use the equivalent formulation of the dynamical system described in Remark \ref{re:proxAMA-DS_uvw}. We show the existence and uniqueness of a strong global solution using the Cauchy-Lipschitz-Picard Theorem. To this end, we rely on \cite[Proposition 6.2.1]{haraux} and \cite{att-sv2011} (see Theorem 2.4, ODE (37) and the conditions (42), (44) and (45)). 
	
	In the first part we have to show, that $\Gamma(t,\cdot,\cdot,\cdot)$ is $L(t)$-Lipschitz continuous for every $t \in [0,+\infty)$ and that the Lipschitz constant as a function of time fulfills $L(\cdot)\in L_{loc}^1([0,+\infty),\R)$. In the second part we will prove that $\Gamma(\cdot,x,z,y)\in L_{loc}^1([0,+\infty),\HH \times \GG \times \mathcal{K})$ for every $(x,z,y) \in \HH \times \GG \times \mathcal{K}$.
	
	(1) Let $t\in[0,+\infty)$ be fixed and let $(x,z,y),(\overline{x},\overline{z},\overline{y})\in\HH \times \GG \times \mathcal{K})$. We have
	\[
	\|\Gamma(t,x,z,y)-\Gamma(t,\overline{x},\overline{z},\overline{y})\|=\sqrt{\|u-\overline{u}\|^2+\|v-\overline{v}\|^2+\|w-\overline{w}\|^2}
	\]
	where (taking into account Lemma \ref{PAMADS_lemma:K_tJ_t})
	\begin{align*}
	u-\overline{u}&=\argmin_{p \in \HH}\left\{F(t,p)
	+\frac{1}{2}\|p-(M_1(t)x+A^*y-\nabla h_1(x))\|^2\right\}\\
	&\quad-\argmin_{p \in \HH}\left\{F(t,p)
	+\frac{1}{2}\|p-(M_1(t)\overline{x}+A^*\overline{y}-\nabla h_1(\overline{x}))\|^2\right\}+\overline{x}-x\\
	&=K_t(M_1(t)x+A^*y-\nabla h_1(x))-K_t(M_1(t)\overline{x}+A^*\overline{y}-\nabla h_1(\overline{x}))+\overline{x}-x.
	\end{align*}
	Therefore,
	\begin{align*}
	\|u-\overline{u}\|^2 \leq 2 \|K_t(M_1(t)x+A^*y-\nabla h_1(x))-K_t(M_1(t)\overline{x}+A^*\overline{y}-\nabla h_1(\overline{x}))\|^2+2\|\overline{x}-x\|^2.
	\end{align*}
	From Lemma \ref{PAMADS_lemma:K_tJ_t}~ we know, that $K_t$ is $\frac{1}{\sigma}$-Lipschitz-continuous. Thus:
	\begin{align*}
	\|u-\overline{u}\|^2 &\leq \frac{2}{\sigma^2}\|M_1(t)(x-\overline{x})+A^*(y-\overline{y})-(\nabla h_1(x)-\nabla h_1(\overline{x}))\|^2+2\|\overline{x}-x\|^2\\
	&\leq\frac{2}{\sigma^2}(2\|M_1(t)(x-\overline{x})+A^*(y-\overline{y})\|^2+2\|\nabla h_1(x)-\nabla h_1(\overline{x})\|^2)+2\|\overline{x}-x\|^2\\
	&\leq\frac{2}{\sigma^2}(4\|M_1(t)\|^2\|x-\overline{x}\|^2+4\|A\|^2\|y-\overline{y}\|^2+2\|\nabla h_1(x)-\nabla h_1(\overline{x})\|^2)+2\|\overline{x}-x\|^2\\
	&\leq 2\left(\frac{4\|M_1(t)\|^2+2L_{h_1}^2}{\sigma^2}+1\right)\|x-\overline{x}\|^2+\frac{8\|A\|^2}{\sigma^2}\|y-\overline{y}\|^2.
	\end{align*}
	Furthermore by taking into account Lemma \ref{PAMADS_lemma:K_tJ_t}~ we have
	\begin{align*}
	v-\overline{v}&=\argmin_{q \in \GG}\left\{G(t,q)
	+\frac{c(t)}{2}\left\|q-\left(\frac{1}{c(t)}M_2(t)z+\frac{1}{c(t)}B^*y
	-B^*A(u+x)+B^*b-\frac{1}{c(t)}\nabla h_2(z))\right)\right\|^2\right\}\\
	&\quad -\argmin_{q \in \GG}\left\{G(t,q)+
	\frac{c(t)}{2}\left\|q-\left(\frac{1}{c(t)}M_2(t)\overline{z}+\frac{1}{c(t)}B^*\overline{y}
	-B^*A(\overline{u}+\overline{x})+B^*b-\frac{1}{c(t)}\nabla h_2(\overline{z}))\right)\right\|^2\right\}\\
	&\quad+\overline{z}-z\\
	&=J_t\left(\frac{1}{c(t)}M_2(t)z+\frac{1}{c(t)}B^*y
	-B^*A(u+x)+B^*b-\frac{1}{c(t)}\nabla h_2(z))\right)\\
	&\quad -J_t\left(\frac{1}{c(t)}M_2(t)\overline{z}+\frac{1}{c(t)}B^*\overline{y}
	-B^*A(\overline{u}+\overline{x})+B^*b-\frac{1}{c(t)}\nabla h_2(\overline{z}))\right)+\overline{z}-z.
	\end{align*}
	According to Lemma \ref{PAMADS_lemma:K_tJ_t}~ and (\textit{Cstrong}) we have that $J_t$ is $\frac{c(t)}{\beta}$-Lipschitz-continuous. We derive:
	\begin{align*}
	\|v-\overline{v}\|^2&\leq 2\left\|J_t\left(\frac{1}{c(t)}M_2(t)z+\frac{1}{c(t)}B^*y
	-B^*A(u+x)+B^*b-\frac{1}{c(t)}\nabla h_2(z))\right)-\right
	.\\
	&\quad \left. J_t\left(\frac{1}{c(t)}M_2(t)\overline{z}+\frac{1}{c(t)}B^*\overline{y}
	-B^*A(\overline{u}+\overline{x})+B^*b-\frac{1}{c(t)}\nabla h_2(\overline{z}))\right)\right\|^2+2\|\overline{z}-z\|^2\\
	&\leq \frac{2c^2(t)}{\beta^2}\left\|\frac{1}{c(t)}M_2(t)(z-\overline{z})+\frac{1}{c(t)}B^*(y-\overline{y})
	-B^*A(u-\overline{u}+x-\overline{x})-\frac{1}{c(t)}(\nabla h_2(z)-\nabla h_2(\overline{z}))\right\|^2\\
	&\quad+2\|z-\overline{z}\|^2\\
	&\leq \frac{2c^2(t)}{\beta^2}\left(\frac{4}{c^2(t)}\|M_2(t)\|^2\|z-\overline{z}\|^2+\frac{4}{c^2(t)}\|B\|^2\|y-\overline{y}\|^2
	+4\|B\|^2\|A\|^2\|u-\overline{u}+x-\overline{x}\|^2\right.\\
	&\left.\quad+\frac{4}{c^2(t)}\|\nabla h_2(z)-\nabla h_2(\overline{z})\|^2\right)+2\|z-\overline{z}\|^2\\
	&\leq \frac{8}{\beta^2}\|M_2(t)\|^2\|z-\overline{z}\|^2+\frac{8}{\beta^2}\|B\|^2\|y-\overline{y}\|^2
	+\frac{16c^2(t)}{\beta^2}\|B\|^2\|A\|^2\|u-\overline{u}\|^2\\
	&\quad+\frac{16c^2(t)}{\beta^2}\|B\|^2\|A\|^2\|x-\overline{x}\|^2+\frac{8}{\beta^2}\|\nabla h_2(z)-\nabla h_2(\overline{z})\|^2+2\|z-\overline{z}\|^2\\
	&\leq \left(\frac{8\|M_2(t)\|^2+8L_{h_2}^2}{\beta^2}+2\right)\|z-\overline{z}\|^2+\frac{8}{\beta^2}\|B\|^2\|y-\overline{y}\|^2
	+\frac{16c^2(t)}{\beta^2}\|B\|^2\|A\|^2\|u-\overline{u}\|^2\\
	&\quad+\frac{16c^2(t)}{\beta^2}\|B\|^2\|A\|^2\|x-\overline{x}\|^2\\
	&\leq \left(\frac{8\|M_2(t)\|^2+8L_{h_2}^2}{\beta^2}+2\right)\|z-\overline{z}\|^2+\frac{8}{\beta^2}\|B\|^2\|y-\overline{y}\|^2\\
	&\quad+\frac{16c^2(t)}{\beta^2}\|B\|^2\|A\|^2\left(\left(\frac{8\|M_1(t)\|^2+4L_{h_1}^2}{\sigma^2}+3\right)\|x-\overline{x}\|^2+\frac{8\|A\|^2}{\sigma^2}\|y-\overline{y}\|^2\right)\\
	&=\frac{16c^2(t)}{\beta^2}\|A\|^2\|B\|^2\left(\frac{8\|M_1(t)\|^2+4L_{h_1}^2}{\sigma^2}+3\right)\|x-\overline{x}\|^2+\frac{8}{\beta^2}\|B\|^2\left(1+\frac{16c^2(t)}{\sigma^2}\|A\|^4\right)\|y-\overline{y}\|^2\\
	&\quad + \left(\frac{8\|M_2(t)\|^2+8L_{h_2}^2}{\beta^2}+2\right)\|z-\overline{z}\|^2.
	\end{align*}
	Finally
	\begin{align*}
	\|w-\overline{w}\|^2&=\|-c(t)(A(u-\overline{u}+x-\overline{x})+B(v-\overline{v}+z-\overline{z}))\|^2\\
	&\leq 4c^2(t)\|A\|^2\|u-\overline{u}\|^2+4c^2(t)\|A\|^2\|x-\overline{x}\|^2+4c^2(t)\|B\|^2\|v-\overline{v}\|^2+4c^2(t)\|B\|^2\|z-\overline{z}\|^2\\
	&\leq 4c^2(t)\|A\|^2\left(3+\frac{8\|M_1(t)\|^2+4L_{h_1}^2}{\sigma^2}+\frac{16c^2(t)}{\beta^2}\|B\|^4\left(\frac{8\|M_1(t)\|^2+4L_{h_1}^2}{\sigma^2}+3\right)\right)\|x-\overline{x}\|^2\\
	&\quad+4c^2(t)\|B\|^2 \left(\frac{8\|M_2(t)\|^2+8L_{h_2}^2}{\beta^2}+3\right)\|z-\overline{z}\|^2\\
	&\quad+32c^2(t)\left(\frac{\|B\|^4}{\beta^2}+\frac{16c^2(t)}{\sigma^2\beta^2}\|A\|^4\|B\|^4+\frac{\|A\|^4}{\sigma^2}\right)\|y-\overline{y}\|^2.
	\end{align*}
	Then we have,
	\begin{align*}
	\|\Gamma(t,x,z,y)-\Gamma(t,\overline{x},\overline{z},\overline{y})\|&\leq\sqrt{L_1(t)\|x-\overline{x}\|^2+L_2(t)\|z-\overline{z}\|^2+L_3(t)\|w-\overline{w}\|^2}\\
	&\leq\sqrt{L_1(t)+L_2(t)+L_3(t)}\sqrt{\|x-\overline{x}\|^2+\|z-\overline{z}\|^2+\|w-\overline{w}\|^2}\\
	&=L(t)\|(x,z,y)-(\overline{x},\overline{z},\overline{y})\|,
	\end{align*}
	where
	\[L(t)=\sqrt{L_1(t)+L_2(t)+L_3(t)}\]
	and
	\begin{align*}
	L_1(t)=&\left(2+\frac{32}{\beta^2}\|A\|^2\|B\|^2+8c^2(t)\|A\|^2\left(1+\frac{16c^2(t)}{\beta^2}\|B\|^4\right)\right)\left(\frac{4\|M_1(t)\|^2+2L_{h_1}^2}{\sigma^2}+1\right)\\
	L_2(t)=&2+\frac{8\|M_2(t)\|^2+8L_{h_2}^2}{\beta^2}+4c^2(t)\|B\|^2 \left(\frac{8\|M_2(t)\|^2+8L_{h_2}^2}{\beta^2}+3\right)\\
	L_3(t)=&\frac{8\|A\|^2}{\sigma^2}+\frac{8}{\beta^2}\|B\|^2\left(1+\frac{16c^2(t)}{\sigma^2}\|A\|^4\right)+32c^2(t)\left(\frac{\|B\|^4}{\beta^2}+\frac{16c^2(t)}{\sigma^2\beta^2}\|A\|^4\|B\|^4+\frac{\|A\|^4}{\sigma^2}\right),
	\end{align*}
	which means that $\Gamma(t,\cdot,\cdot,\cdot)$ is $L(t)$-Lipschitz continuous. Since $M_1 \in L_{loc}^1([0,+\infty),\HH)$, $M_2 \in L_{loc}^1([0,+\infty),\GG)$ and $c(t)$ is bounded, it follows that $L(\cdot)\in L_{loc}^1([0,+\infty),\R)$.
	
	(2)~Now we will show that $\Gamma(\cdot,x,z,y) \in  L_{loc}^1([0,+\infty),\HH\times \GG \times \mathcal{K})$ for every $(x,z,y) \in \HH\times \GG \times \mathcal{K}$.
	Let $(x,z,y) \in \HH\times \GG \times \mathcal{K}$ be fixed and $T>0$. We have
	\[\int_{0}^{T}\|\Gamma(t,x,z,y)\|dt=\int_{0}^{T}\sqrt{\|u(t,x,z,y)\|^2+\|v(t,x,z,y)\|^2+\|w(t,x,z,y)\|^2}dt.\]
	From Lemma \ref{PAMADS_lemma:RQ}~ and the fact that $\sigma>0$ and $\beta(t)=\beta>0$, for all $t\in0,+\infty)$, we have
	\begin{align*}
	\|u(t,x,z,y)\|^2 & \leq 2\|u(t,x,z,y)-u(0,x,z,y)\|^2+2\|u(0,x,z,y)\|^2\\
	& \leq \frac{2\|u(0,x,z,y)\|^2}{\sigma^2}\|M_1(t)-M_1(0)\|^2+2\|u(0,x,z,y)
	\|^2,
	\end{align*}
	\begin{align*}
	\|v(t,x,z,y)\|^2 & \leq 2\|v(t,x,z,y)-v(0,x,z,y)\|^2+2\|v(0,x,z,y)\|^2\\
	& \leq \frac{6c^2(t)\|A\|^2\|B\|^2\|u(0,x,z,y)\|^2}{\sigma^2\beta^2}\|M_1(t)-M_1(0)\|^2\\
	& \quad+\frac{6\|v(0,x,z,y)\|^2}{\beta^2}\|M_2(t)-M_2(0)\|^2+\frac{6\|w(0,x,z,y)\|^2\cdot\|B\|^2}{c^2_0\beta^2}|c(t)-c_0|^2+2\|v(0,x,z,y)
	\|^2,
	\end{align*}
	(where $c_0=c(0)$) and
	\begin{align*}
	\|w(t,x,z,y)\|^2 & \leq c^2(t)\|b-A(u(t,x,z,y)+x)-B(v(t,x,z,y)+z))\|^2\\
	& \leq 4c^2(t)(\|b\|^2+\|A\|^2\|u(t,x,z,y)\|^2+\|B\|^2\|v(t,x,z,y)\|^2+\|Ax-Bz\|^2)\\
	& \leq 4c^2(t)\left(\|b\|^2+\frac{2\|A\|^2\|u(0,x,z,y)\|^2}{\sigma^2}\|M_1(t)-M_1(0)\|^2\right.\\
	&\left.\quad+2\|A\|^2\|u(0,x,z,y)\|^2+\frac{6c^2(t)\|A\|^2\|B\|^4\|u(0,x,z,y)\|^2}{\sigma^2\beta^2}\|M_1(t)-M_1(0)\|^2\right.\\
	&\left.\quad+\frac{6\|B\|^2\|v(0,x,z,y)\|^2}{\beta^2}\|M_2(t)-M_2(0)\|^2+\frac{6\|B\|^4\|w(0,x,z,y)\|^2}{c^2_0\beta^2}|c(t)-c_0|^2\right.\\
	&\left. \quad+2\|B\|^2\|v(0,x,z,y)\|^2+\|Ax-Bz\|^2\right).
	\end{align*}
	Since $M_1 \in L_{loc}^1([0,+\infty),\HH)$, $M_2 \in L_{loc}^1([0,+\infty),\GG)$ and $c(t)$ is bounded, it follows that the integral
	\[\int_{0}^{T}\|\Gamma(t,x,z,y)\|dt\]
	exists and it is finite. So we have that $\Gamma(\cdot,x,z,y) \in  L_{loc}^1([0,+\infty),\HH\times \GG \times \mathcal{K})$.
	The conclusion follows.
\end{proof}

\section{Convergence of the trajectories}

In the beginning of this section we will give some results, which we will use then to prove the convergence of the trajectories of the dynamical system \eqref{eq:proxAMA-DS}. In the following the real vector space $\mathcal{L}(\HH):=\{A:\HH \to \HH: A \text{ is linear and continuous}\}$ is endowed 
with the norm $\|A\|=\sup_{\|x\|\leq 1}\|Ax\|.$

\begin{definition}\label{def-mdot}
	The map $M:[0,+\infty) \to \mathcal{L}(\HH) $ is said to be derivable at $t_0\in [0,+\infty)$, if the limit
	\begin{equation}\label{mdot}
	\lim\limits_{h\to 0}\frac{M(t_0+h)-M(t_0)}{h}
	\end{equation}
	taken with respect to the norm topology of $\mathcal{L}(\HH)$ exists. When this is the case, we denote by $\dot{M}(t_0)\in \mathcal{L}(\HH)$ the value of the limit.
\end{definition}

In case $M:[0,+\infty)\to\mathcal{L}(\HH)$ is derivable at $t_0 \in [0,+\infty)$ and $x,y:[0,+\infty)\to \HH$ are also derivable at $t_0$, we will use the following formula (see \cite[Lemma 4]{bocs19}): 
	\begin{equation}\label{diff}\frac{d}{dt}\langle M(t)x(t),y(t)\rangle |_{t=t_0}=\langle \dot{M}(t_0)x(t_0),y(t_0)\rangle+\langle M(t_0)\dot{x}(t_0),y(t_0)\rangle+\langle M(t_0)x(t_0),\dot{y}(t_0)\rangle.\end{equation}

We start with a result where we show that under appropriate conditions the second derivatives of the trajectories exist
almost everywhere and give also an upper bound on their norms. This will be used in the proof of the main result
Theorem \ref{PAMADS:th:convergence}.

\begin{lemma}\label{PAMADS_lemma:xz_dot}
	Assume that (\textit{Cstrong}) holds and that the maps $M_1:[0,+\infty)\to S_+(\HH)$ and $M_2:[0,+\infty)\to S_+(\GG)$ are locally absolutely continuous. Furthermore we assume that $c$ is locally absolutely continuous and
$0<\inf_{t\geq 0} c(t)\leq\sup_{t\geq 0}c(t)<+\infty$. For a given starting point $(x^0,z^0,y^0) \in \HH \times \GG \times \mathcal{K}$ let $(x,z,y):[0,+\infty)\to\HH \times \GG \times \mathcal{K}$ be the unique strong  global solution of the dynamical system \eqref{eq:proxAMA-DS}. Then
	\[t\to(\dot{x}(t),\dot{z}(t),\dot{y}(t))\]
	is locally absolutely continuous, hence $(\ddot{x}(t),\ddot{z}(t),\ddot{y}(t))$ exists for almost every $t \in [0,+\infty)$.
	In addition, if\\ $\sup_{t\geq 0}\|M_1(t)\|<+\infty$ and $\sup_{t\geq 0}\|M_2(t)\|<+\infty$, then there exists $L>0$ such that
	\[
	\|\ddot{x}(t)\|+\|\ddot{z}(t)\|\leq L(\|\dot{x}(t)\|+\|\dot{z}(t)\|+\|\dot{y}(t)\|+\|\dot{M_1}(t)\|\|\dot{x}(t)\|+\|\dot{M_2}(t)\|\|\dot{z}(t)\|+|\dot{c}(t)|\|\dot{y}(t)\|)
	\]
	for almost every $t\in [0,+\infty)$.
\end{lemma}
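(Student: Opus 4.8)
The plan is to establish the two assertions separately. For the local absolute continuity I would work with the equivalent formulation $\dot U(t)=\Gamma(t,U(t))$ from Remark~\ref{re:proxAMA-DS_uvw} and show that $t\mapsto\Gamma(t,U(t))$ is locally absolutely continuous. Fixing $T>0$, the trajectory stays in a bounded set on $[0,T]$, and evaluating the pointwise estimates behind part~(2) of the existence theorem along the trajectory (which is bounded there) shows that $\dot U$ is bounded on $[0,T]$, i.e.\ $U$ is locally Lipschitz. For $s,t\in[0,T]$ I would split
\[\|\Gamma(t,U(t))-\Gamma(s,U(s))\|\le\|\Gamma(t,U(t))-\Gamma(t,U(s))\|+\|\Gamma(t,U(s))-\Gamma(s,U(s))\|.\]
The first term is bounded by $L(t)\|U(t)-U(s)\|$, where the Lipschitz function $L(\cdot)$ from the existence theorem is locally \emph{bounded} here (since $M_1,M_2$ are continuous and $c$ is bounded), while the second is the temporal variation at the fixed point $U(s)$, controlled by Lemma~\ref{PAMADS_lemma:RQ} through $\|M_1(t)-M_1(s)\|$, $\|M_2(t)-M_2(s)\|$ and $|c(t)-c(s)|$ with coefficients that stay bounded on $[0,T]$ because $\dot U$ does. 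Local absolute continuity of $M_1,M_2,c$ together with local boundedness of $L$ then yields, via the $\varepsilon$--$\eta$ characterization of absolute continuity, that $\dot U$ is locally absolutely continuous; in particular $(\ddot x,\ddot z,\ddot y)$ exists almost everywhere.

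For the quantitative bound I would differentiate the optimality inclusions \eqref{PAMADS:OC_DS_1}--\eqref{PAMADS:OC_DS_2}. Writing $\xi:=\dot x+x$ and $\psi:=M_1(\cdot)x+A^*y-\nabla h_1(x)$, the first inclusion reads $\psi(t)\in(\partial f+M_1(t))(\xi(t))$. I would apply the $\sigma$-strong monotonicity of $\partial f+M_1(t)$ to two nearby times (rewriting the time-$s$ inclusion with the operator at time $t$ by adding $(M_1(t)-M_1(s))\xi(s)$), divide by $(t-s)^2$ and let $s\to t$ at an a.e.\ point of differentiability, obtaining $\sigma\|\dot\xi\|\le\|\dot\psi-\dot M_1\xi\|$. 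The key simplification is the cancellation $\dot M_1 x-\dot M_1\xi=-\dot M_1\dot x$, which converts the dangerous term $\dot M_1\xi$ (involving the \emph{value} $\xi$) into the velocity term $\dot M_1\dot x$; together with the standard estimate $\|\tfrac{d}{dt}\nabla h_1(x)\|\le L_{h_1}\|\dot x\|$ this gives $\sigma\|\dot\xi\|\le(\|M_1\|+\|\dot M_1\|+L_{h_1})\|\dot x\|+\|A\|\|\dot y\|$. Since $\sup_t\|M_1(t)\|<\infty$ and $\|\ddot x\|\le\|\dot\xi\|+\|\dot x\|$, this already bounds $\|\ddot x\|$ by a constant times $\|\dot x\|+\|\dot y\|+\|\dot M_1\|\|\dot x\|$.

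For $\ddot z$ I would proceed analogously with $\eta:=\dot z+z$ and $\theta:=B^*y-cB^*A\xi+cB^*b-\nabla h_2(z)+M_2 z$, so that the second inclusion becomes $\theta(t)\in(\partial g+c(t)B^*B+M_2(t))(\eta(t))$. Here the operator is $\beta$-strongly monotone by \textit{(Cstrong)}, and the same finite-difference/limit argument yields $\beta\|\dot\eta\|\le\|\dot\theta-\dot c\,B^*B\eta-\dot M_2\eta\|$. Two cancellations are essential: the $\dot M_2$-terms combine as $\dot M_2 z-\dot M_2\eta=-\dot M_2\dot z$, and the $\dot c$-terms combine, \emph{using the third line of} \eqref{eq:proxAMA-DS}, namely $\dot y=c(b-A\xi-B\eta)$, into $\tfrac{\dot c}{c}B^*\dot y$. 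What remains is $B^*\dot y+\tfrac{\dot c}{c}B^*\dot y-cB^*A\dot\xi-\tfrac{d}{dt}\nabla h_2(z)+M_2\dot z-\dot M_2\dot z$, whose norm is bounded — using $0<\inf_t c(t)\le\sup_t c(t)<\infty$, $\sup_t\|M_2(t)\|<\infty$ and $\|\tfrac{d}{dt}\nabla h_2(z)\|\le L_{h_2}\|\dot z\|$ — by a constant times $\|\dot y\|+|\dot c|\|\dot y\|+\|\dot\xi\|+\|\dot z\|+\|\dot M_2\|\|\dot z\|$. Substituting the bound for $\|\dot\xi\|$ from the previous step and using $\|\ddot z\|\le\|\dot\eta\|+\|\dot z\|$ gives the estimate for $\|\ddot z\|$; adding the two produces the claimed inequality.

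I expect the main obstacle to be precisely these cancellations. A naive use of strong monotonicity produces terms in which $\dot M_1,\dot M_2,\dot c$ multiply the merely bounded \emph{values} $\xi,\eta,x,z$ rather than the \emph{velocities} $\dot x,\dot z,\dot y$, and such terms do not fit the asserted right-hand side. The crux is to differentiate with the \emph{full} strongly monotone operators $\partial f+M_1(t)$ and $\partial g+c(t)B^*B+M_2(t)$ (so that the $\dot M_i$ contributions telescope into velocity terms) and to invoke the dual equation to rewrite the $\dot c$ contribution as $\tfrac{\dot c}{c}B^*\dot y$. A secondary technical point is justifying $\dot U\in L^\infty_{\mathrm{loc}}$ in the first part, for which the pointwise estimates behind part~(2) of the existence theorem, evaluated along the bounded trajectory, are exactly what is needed; the boundedness hypotheses $\sup_t\|M_i(t)\|<\infty$ then enter only to keep the coefficients $\|M_1\|,\|M_2\|$ appearing in the second part under control.
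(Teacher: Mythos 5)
Your argument is correct, and the two halves deserve separate verdicts. The first half is essentially the paper's own proof: the same decomposition of $\dot U(t)-\dot U(r)$ into a spatial increment, bounded via the Lipschitz estimates of Lemma~\ref{PAMADS_lemma:K_tJ_t}, and a temporal increment, bounded via Lemma~\ref{PAMADS_lemma:RQ} with coefficients bounded along the trajectory on $[0,T]$ (your boundedness-of-$\dot U$ step corresponds to the paper's observation that $r\mapsto R_{(x(r),z(r),y(r))}(r)$, $Q_{(x(r),z(r),y(r))}(r)$, $P_{(x(r),z(r),y(r))}(r)$ are bounded on $[0,T]$), followed by the $\varepsilon$--$\eta$ characterization of absolute continuity. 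For the quantitative bound you genuinely diverge: the paper never differentiates the inclusions. Instead it notes that under $\sup_{t\geq 0}\|M_i(t)\|<+\infty$ the part-one estimates hold with \emph{global} constants, that $R_{(x(r),z(r),y(r))}(r)=\dot x(r)$, $Q_{(x(r),z(r),y(r))}(r)=\dot z(r)$ and $P_{(x(r),z(r),y(r))}(r)=\dot y(r)$, and then divides the resulting inequality by $h$ with $t=r+h$ and lets $h\to 0$. You re-run the strong-monotonicity comparison in differentiated form directly on \eqref{PAMADS:OC_DS_1}--\eqref{PAMADS:OC_DS_2}; this is the same mechanism in different packaging, since your two cancellations --- $\dot M_1x-\dot M_1\xi=-\dot M_1\dot x$, and the dual-equation substitution turning the $\dot c$ contribution into $(\dot c/c)B^*\dot y$ --- are exactly what the proof of Lemma~\ref{PAMADS_lemma:RQ} does when it rewrites $Ax+Bz-b$ through $P_{(x,z,y)}(r)$, which is why its coefficients come out as velocities rather than positions. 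Your route buys a self-contained computation with explicit constants that makes transparent why no position terms survive on the right-hand side; its cost is the (routine) justification of a.e.\ differentiability of $t\mapsto M_1(t)x(t)$ and of $\nabla h_i$ along the trajectory, for which \eqref{diff} and the Lipschitz continuity of $\nabla h_i$ suffice. The paper's route is more economical: once part one is established, the bound is a one-line difference quotient of estimates already proved. Both routes rely on part one to know that $\ddot x,\ddot z$ exist a.e.\ (a dependency your write-up respects), and both share with the statement itself the implicit assumption that $\dot M_1,\dot M_2,\dot c$ exist a.e.
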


\begin{proof}
	Let $T>0$ be fixed. In the following we use the notation \eqref{eq:proxAMA-DS_uvw} again. Let $t,r \in [0,T]$ be fixed. We have
	\begin{align*}
	\|\dot{U}(t)-&\dot{U}(r)\|=\|\Gamma(t,U(t))-\Gamma(r,U(r))\|\leq\|\Gamma(t,U(t))-\Gamma(t,U(r))\|+\|\Gamma(t,U(r))+\Gamma(r,U(r))\|\\
	\leq&\|u(t,x(t),z(t),y(t))-u(t,x(r),z(r),y(r))\|+\|v(t,x(t),z(t),y(t))-v(t,x(r),z(r),y(r))\|\\
	&+\|w(t,x(t),z(t),y(t))-w(t,x(r),z(r),y(r))\|+\|u(t,x(r),z(r),y(r)-u(r,x(r),z(r),y(r))\|\\
	&+\|v(t,x(r),z(r),y(r))-v(r,x(r),z(r),y(r))\|+\|w(t,x(r),z(r),y(r))-w(r,x(r),z(r),y(r))\|.
	\end{align*}
	Since
	\begin{align*}
	u(t,x(t),z(t),y(t))-u(t,x(r),z(r),y(r))=&K_t(M_1(t)x(t)+A^*y(t)-\nabla h_1(x(t)))\\
	& -K_t(M_1(t)x(r)+A^*y(r)-\nabla h_1(x(r)))-x(t)+x(r)
	\end{align*}
	due to Lemma \ref{PAMADS_lemma:K_tJ_t}~ we get
	\begin{align*}
	\|u&(t,x(t),z(t),y(t))-u(t,x(r),z(r),y(r))\|\\
	&\leq\frac{1}{\sigma}\|M_1(t)(x(t)-x(r))+A^*(y(t)-y(r))
	-(\nabla h_1(x(t))-\nabla h_1(x(r)))\|+\|x(t)-x(r)\|\\
	&\leq\left(\frac{\|M_1(t)\|}{\sigma}+\frac{L_{h_1}}{\sigma}+1\right)\|x(t)-x(r)\|+\frac{\|A\|}{\sigma}\|y(t)-y(r)\|.
	\end{align*}
	Since $t \to \|M_1(t)\|$ is bounded on $[0,T]$, there exists $L_1:=L_1(T)>0$ such that
	\begin{equation}\label{PAMADS_eq:norm_u1}
	\|u(t,x(t),z(t),y(t))-u(t,x(r),z(r),y(r))\|\leq L_1(\|x(t)-x(r)\|+\|y(t)-y(r)\|).
	\end{equation}
	Similarly we have
	\begin{align*}
	v(t,&x(t),z(t),y(t))-v(t,x(r),z(r),y(r))\\
	=&J_t\left(\frac{1}{c(t)}M_2(t)z(t)+\frac{1}{c(t)}B^*y(t)-B^*A(u(t,x(t),z(t),y(t))+x(t))+B^*b-\frac{1}{c(t)}\nabla h_2(z(t))\right)\\
	&-J_t\left(\frac{1}{c(t)}M_2(t)z(r)+\frac{1}{c(t)}B^*y(r)-B^*A(u(t,x(r),z(r),y(r))+x(r))+B^*b-\frac{1}{c(t)}\nabla h_2(z(r))\right)\\
	&-z(t)+z(r),
	\end{align*}
	and again according to Lemma \ref{PAMADS_lemma:K_tJ_t}~, we get
	\begin{align*}
	\|v&(t,x(t),z(t),y(t))-v(t,x(r),z(r),y(r))\|\\
	&\leq \frac{c(t)}{\beta}\left\|\frac{1}{c(t)}M_2(t)(z(t)-z(r))+\frac{1}{c(t)}B^*(y(t)-y(r))-B^*A(u(t,x(t),z(t),y(t))\right.\\
	&\quad\left.-u(t,x(r),z(r),y(r))+x(t)-x(r))-\frac{1}{c(t)}(\nabla h_2(z(t))-\nabla h_2(z(r)))\right\|+\|z(t)-z(r)\|\\
	&\leq\frac{c(t)}{\beta}\|A\|\|B\|(L_1+1)\|x(t)-x(r)\|+\left(\frac{\|M_2(t)\|}{\beta}+\frac{L_{h_2}}{\beta}+1\right)\|z(t)-z(r)\|\\
	&\quad+\frac{\|B\|}{\beta}(1+c(t)\|A\|L_1)\|y(t)-y(r)\|.
	\end{align*}
	Since $t \to \|M_2(t)\|$ is bounded on $[0,T]$, there exists $L_2:=L_2(T)>0$ such that
	\begin{equation}\label{PAMADS_eq:norm_v1}
	\|v(t,x(t),z(t),y(t))-v(t,x(r),z(r),y(r))\|\leq L_2(\|x(t)-x(r)\|+\|z(t)-z(r)\|+\|y(t)-y(r)\|).
	\end{equation}
	Using \eqref{PAMADS_eq:norm_u1} and \eqref{PAMADS_eq:norm_v1} we obtain
	\begin{align*}
	\|w&(t,x(t),z(t),y(t))-w(t,x(r),z(r),y(r))\|\\
	&\leq c(t)\|A(u(t,x(t),z(t),y(t))-u(t,x(r),z(r),y(r))+x(t)-x(r))\|\\
	&\quad+c(t)\|B\|\|v(t,x(t),z(t),y(t))-v(t,x(r),z(r),y(r))+z(t)-z(r)\|\\
	&\leq c(t)(\|A\|L_1+\|B\|L_2+\|A\|)\|x(t)-x(r)\|+c(t)(\|B\|L_2+\|B\|)\|z(t)-z(r)\|\\
	&\quad+c(t)(\|A\|L_1+\|B\|L_2)\|y(t)-y(r)\|.
	\end{align*}
	So, there exists $L_3(T):=L_3:=\sup_{t \in [0,T]}c(t)(\|A\|L_1+\|B\|L_2+\|A\|+\|B\|)>0$ such that
	\begin{equation}\label{PAMADS_eq:norm_w1}
	\|w(t,x(t),z(t),y(t))-w(t,x(r),z(r),y(r))\|\leq L_3(\|x(t)-x(r)\|+\|z(t)-z(r)\|+\|y(t)-y(r)\|).
	\end{equation}
	Using Lemma \ref{PAMADS_lemma:RQ}(i), we obtain
	\begin{align}
	\|u(t,x(r),z(r),y(r))-u(r,x(r),z(r),y(r))\|&=\|R_{(x(r),z(r),y(r))}(t)-R_{(x(r),z(r),y(r))}(r)\|\nonumber\\
	&\leq\frac{\|R_{(x(r),z(r),y(r))}(r)\|}{\sigma}\|M_1(t)-M_1(r)\|.\label{PAMADS_eq:norm_u2}
	\end{align}
	Since $K_r$ and $\nabla h_1$ are Lipschitz continuous (see Lemma \ref{PAMADS_lemma:K_tJ_t}) and $x,z,y$ and $M_1$ are absolutely continuous on $[0,T]$, the map
	\[
	r \mapsto R_{(x(r),z(r),y(r))}(r) =K_r(M_1(r)x(r)+A^*y(r)-\nabla h_1(x(r)))-x(r)
	\]
	is bounded in $[0,T]$. Therefore, there exists $L_4:=L_4(T)>0$ such that
	\begin{equation*}
	\|u(t,x(r),z(r),y(r))-u(r,x(r),z(r),y(r))\|\leq L_4\|M_1(t)-M_1(r)\|.
	\end{equation*}
	In an analog way, using Lemma \ref{PAMADS_lemma:RQ}(ii), we get
	\begin{align}
	\|v(t&,x(r),z(r),y(r))-v(r,x(r),z(r),y(r))\|=\|Q_{(x(r),z(r),y(r))}(t)-Q_{(x(r),z(r),y(r))}(r)\|\nonumber\\
	&\leq\frac{c(t)\|A\|\|B\|\|R_{(x(r),z(r),y(r))}(r)\|}{\sigma\beta}\|M_1(t)-M_1(r)\|+\frac{\|Q_{(x(r),z(r),y(r))}(r)\|}{\beta}\|M_2(t)-M_2(r)\|\nonumber\\
	&\quad +\frac{\|P_{(x(r),z(r),y(r))}(r)\|\cdot\|B\|}{c(r)\beta}|c(t)-c(r)|.\label{PAMADS_eq:norm_v2}
	\end{align}
	Since $J_r$ and $\nabla h_2$ are Lipschitz continuous (see Lemma \ref{PAMADS_lemma:K_tJ_t}), $x,z,y$ and $M_2$ are absolutely continuous
on $[0,T]$ and $c$ is bounded, the maps
	\begin{align*}
	r \mapsto Q_{(x(r),z(r),y(r))}(r) =&J_r\left(\frac{M_2(r)}{c(r)}z(r)+\frac{B^*}{c(r)}y(r)-B^*A(u(r,x(r),z(r),y(r)+x(r))\right.\\
	&\left.+B^*b-\frac{1}{c(r)}\nabla h_2(z(r))\right)-z(r)
	\end{align*}
	and
	\[
	r \mapsto P_{(x(r),z(r),y(r))}(r)=c(r)\left(b-A(R_{(x(r),z(r),y(r))}(r)+x(r))-B(Q_{(x(r),z(r),y(r))}(r)+z(r)\right)
	\]
	are bounded in $[0,T]$. Therefore, there exists $L_5:=L_5(T)>0$ such that
	\begin{align*}
	\|v(t,x(r),z(r),y(r))-v(r,x(r),z(r),y(r))\|\leq L_5(&\|M_1(t)-M_1(r)\|+\|M_2(t)-M_2(r)\|\\
	&+|c(t)-c(r)|).
	\end{align*}
	Further by using \eqref{PAMADS_eq:norm_u2} and \eqref{PAMADS_eq:norm_v2}, we obtain
	\begin{align*}
	\|w&(t,x(r),z(r),y(r))-w(r,x(r),z(r),y(r))\|\\
	&\leq \|(c(t)-c(r))b-(c(t)-c(r))Ax(r)-(c(t)-c(r))Bz(r)-A(c(t)(u(t,x(r),z(r),y(r))\\
	&\quad-c(r)(u(r,x(r),z(r),y(r))))-B(c(t)(v(t,x(r),z(r),y(r))-c(r)(v(r,x(r),z(r),y(r))))\|\\	
	&\leq \|b\||c(t)-c(r)|+\|A\|\|x(r)\||c(t)-c(r)|+\|B\|\|z(r)\||c(t)-c(r)|\\
	&\quad+\|A(c(t)u(t,x(r),z(r),y(r)-c(r)u(r,x(r),z(r),y(r)))\|\\
	&\quad+\|B(c(t)v(t,x(r),z(r),y(r)-c(r)v(r,x(r),z(r),y(r)))\|\\
	&=(\|b\|+\|A\|\|x(r)\|+\|B\|\|z(r)\|)|c(t)-c(r)|\\
	&\quad+\|A(c(t)u(t,x(r),z(r),y(r)-c(r)u(t,x(r),z(r),y(r)))\|\\
	&\quad+\|A(c(r)u(t,x(r),z(r),y(r)-c(r)u(r,x(r),z(r),y(r)))\|\\
	&\quad+\|B(c(t)v(t,x(r),z(r),y(r)-c(r)v(t,x(r),z(r),y(r)))\|\\
	&\quad+\|B(c(r)v(t,x(r),z(r),y(r)-c(r)v(r,x(r),z(r),y(r)))\|\\
	&\leq(\|b\|+\|A\|\|x(r)\|+\|B\|\|z(r)\|+\|A\|\|(u(t,x(r),z(r),y(r))\|\\
	&\quad+\|B\|\|v(t,x(r),z(r),y(r))\|+\|B\|L_5)|c(t)-c(r)|+c(r)(\|A\|L_4+\|B\|L_5)\|M_1(t)-M_1(r)\|\\
	&\quad+c(r)\|B\|L_5\|M_2(t)-M_2(r)\|.
	\end{align*}
	So, there exists $L_6:=L_6(T)=\sup_{r \in [0,T]}(\|b\|+\|A\|\|x(r)\|+\|B\|\|z(r)\|+\|A\|\|(u(t,x(r),z(r),y(r))\|+\|B\|\|v(t,x(r),z(r),y(r))\|+c(r)\|A\|L_4+c(r)\|B\|L_5+\|B\|L_5)>0$ such that
	\begin{align}
	\|w(t,x(r)&,z(r),y(r))-w(r,x(r),z(r),y(r))\|\nonumber\\
	&\leq L_6(\|M_1(t)-M_1(r)\|+\|M_2(t)-M_2(r)\|+|c(t)-c(r)|).\label{PAMADS_eq:norm_w2}
	\end{align}
	
	Summing the relations \eqref{PAMADS_eq:norm_u1}-\eqref{PAMADS_eq:norm_w2} we get that there exists $L_7:=L_7(T)>0$ such that
	\begin{align*}
	\|\dot{U}(t)-\dot{U}(r)\| \leq L_7(&\|x(t)-x(r)\|+\|z(t)-z(r)\|+\|y(t)-y(r)\|\\
	&+\|M_1(t)-M_1(r)\|+\|M_2(t)-M_2(r)\|+|c(t)-c(r)|).
	\end{align*}
	Let $\epsilon>0$. Since the maps $x,z,y, M_1$, $M_2$ and $c$ are absolutely continuous on $[0,T]$, there exists $\eta>0$ such that for any finite family of intervals $I_k=(a_k,b_k) \subseteq [0,T]$ such that for any subfamily of disjoint intervals $I_j$ with $\sum_{j}|b_j-a_j|<\eta$ holds
	\[
	\sum_{j}\|x(b_j)-x(a_j)\|<\frac{\epsilon}{6L_7},\quad \sum_{j}\|z(b_j)-z(a_j)\|<\frac{\epsilon}{6L_7},\quad \sum_{j}\|y(b_j)-y(a_j)\|<\frac{\epsilon}{6L_7},
	\]
	\[
	\sum_{j}\|M_1(b_j)-M_1(a_j)\|<\frac{\epsilon}{6L_7},\quad \sum_{j}\|M_2(b_j)-M_2(a_j)\|<\frac{\epsilon}{6L_7}\text{ and }
	\sum_{j}|c(b_j)-c(a_j)|<\frac{\epsilon}{6L_7}.
	\]
	So we have
	\[
	\sum_{j}\|\dot{U}(b_j)-\dot{U}(a_j)\|<\epsilon,
	\]
	and therefore $\dot{U}(\cdot)=(\dot{x}(\cdot),\dot{z}(\cdot),\dot{y}(\cdot))$ is absolutely continuous on $[0,T]$. This proves that the second order derivatives $\ddot{x}, \ddot{z}, \ddot{y}$ exists almost everywhere on $[0,+\infty)$.
	
	To prove the second statement we assume that  $\sup_{t\geq 0}\|M_1(t)\|<+\infty$ and $\sup_{t\geq 0}\|M_2(t)\|<+\infty$. Note that $c(t)$ is bounded for all $t \in [0,+\infty)$. Then $L_1,L_2$ and $L_3$ can be taken as being global constants, so that \eqref{PAMADS_eq:norm_u1}, \eqref{PAMADS_eq:norm_v1} and \eqref{PAMADS_eq:norm_w1} hold for every $t,r \in [0,+\infty)$.
	
	Since $R_{(x(r),z(r),y(r)}(r)=\dot{x}(r)$, $Q_{(x(r),z(r),y(r)}(r)=\dot{z}(r)$ and $P_{(x(r),z(r),y(r)}(r)=\dot{y}(r)$ for every $r\in [0,+\infty)$ and taking into account \eqref{PAMADS_eq:norm_u2} and \eqref{PAMADS_eq:norm_v2} we get
	\begin{equation}\label{PAMADS_eq:norm_u2_global}
	\|u(t,x(r),z(r),y(r))-u(r,x(r),z(r),y(r))\|\leq\frac{\|\dot{x}(r)\|}{\sigma}\|M_1(t)-M_1(r)\|.
	\end{equation}
	and, respectively,
	\begin{align}
	\|v(t,x(r),z(r),y(r))-v(r,x(r),z(r),y(r))\|
	\leq&\frac{c(t)\|A\|\|B\|\|\dot{x}(r)\|}{\sigma\beta}\|M_1(t)-M_1(r)\|\nonumber\\
	&+\frac{\|\dot{z}(r)\|}{\beta}\|M_2(t)-M_2(r)\|+\frac{\|\dot{y}(r)\|\cdot\|B\|}{\beta c(r)}|c(t)-c(r)|\label{PAMADS_eq:norm_v2_global}
	\end{align}
	for every $t,r \in [0,+\infty)$. It holds
	\begin{align*}
	\|\dot{x}(t)&-\dot{x}(r)\|+\|\dot{z}(t)-\dot{z}(r)\|\\
	&= \|u(t,x(t),z(t),y(t))-u(r,x(r),z(r),y(r))\|
	+\|v(t,x(t),z(t),y(t))-v(r,x(r),z(r),y(r))\|\\
	&\leq\|u(t,x(t),z(t),y(t))-u(t,x(r),z(r),y(r))\|+\|u(t,x(r),z(r),y(r))-u(r,x(r),z(r),y(r))\|\\
	&\quad+\|v(t,x(t),z(t),y(t))-v(t,x(r),z(r),y(r))\|+\|v(t,x(r),z(r),y(r))-v(r,x(r),z(r),y(r))\|.
	\end{align*}
	So, it follows  from \eqref{PAMADS_eq:norm_u1}, \eqref{PAMADS_eq:norm_v1}, \eqref{PAMADS_eq:norm_u2_global} and \eqref{PAMADS_eq:norm_v2_global} that there exists $L>0$ such that
	\begin{align*}
	\|\dot{x}(t)-\dot{x}(r)\|+\|\dot{z}(t)-\dot{z}(r)\|\leq &L(\|x(t)-x(r)\|+\|z(t)-z(r)\|+\|y(t)-y(r)\|\\
	&+\|\dot{x}(r)\|\|M_1(t)-M_1(r)\|+\|\dot{z}(r)\|\|M_2(t)-M_2(r)\|\\
	&+\|\dot{y}(r)\||c(t)-c(r)|)
	\end{align*}
	for every $t,r \in [0,+\infty)$. Now we fix $r \in [0,+\infty)$ at which the second derivative of the trajectories exists and take in the above inequality $t=r+h$ for some $h>0$. Then
	\begin{align*}
	\|\dot{x}(r&+h)-\dot{x}(r)\|+\|\dot{z}(r+h)-\dot{z}(r)\|\\
	&\leq L(\|x(r+h)-x(r)\|+\|z(r+h)-z(r)\|+\|y(r+h)-y(r)\|)\\
	&\quad+L(\|\dot{x}(r)\|\|M_1(r+h)-M_1(r)\|+\|\dot{z}(r)\|\|M_2(r+h)-M_2(r)\|\\
	&\quad +\|\dot{y}(r)\||c(r+h)-c(r)|).
	\end{align*}
	After dividing in the above inequality by $h$ and letting $h\to0$ we obtain
	\[
	\|\ddot{x}(r)\|+\|\ddot{z}(r)\|\leq L(\|\dot{x}(r)\|+\|\dot{z}(r)\|+\|\dot{y}(r)\|+\|\dot{x}(r)\|\|\dot{M_1}(r)\|+\|\dot{z}(r)\|\|\dot{M_2}(r)\|+\|\dot{y}(r)\||\dot{c}(r)|)
	\]
	and the proof is complete.
\end{proof}

In the following we recall two results which we need for the asymptotic analysis (see \cite[Lemma 5.1]{abat14} and \cite[Lemma 5.2]{abat14}).

\begin{lemma}\label{PAMADS_lemma:Lp_lim}
	Assume that $u:[0,+\infty)\to \R$ is locally absolutely continuous and bounded from below and that there exists $v \in L^1([0,+\infty),\R)$ with the property that for almost every $t \in [0,+\infty)$
	\[\frac{d}{dt}u(t)\leq v(t).\]
	Then there exists $\lim\limits_{t \to +\infty}u(t) \in \R$.
\end{lemma}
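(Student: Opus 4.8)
The plan is to reduce the statement to the elementary fact that a monotone function which is bounded on the appropriate side has a limit at infinity. The key device is to subtract off the cumulative effect of $v$ and thereby produce a monotone auxiliary function.

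First I would introduce
\[
V(t)=\int_0^t v(s)\,ds \quad \text{and} \quad w(t)=u(t)-V(t), \qquad t\in[0,+\infty).
\]
Since $v\in L^1([0,+\infty),\R)$, the map $V$ is absolutely continuous on $[0,+\infty)$ with $\dot V(t)=v(t)$ for almost every $t$, and moreover the improper integral $\int_0^{+\infty}v(s)\,ds$ exists and is finite; in particular $V$ converges to a finite limit $V(+\infty)$ as $t\to+\infty$ and is bounded. As $u$ is locally absolutely continuous by hypothesis, $w$ is locally absolutely continuous as well, and for almost every $t$ we have
\[
\dot w(t)=\dot u(t)-v(t)\leq 0,
\]
by the assumed differential inequality. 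Using the fundamental theorem of calculus for absolutely continuous functions (i.e. $w(t_2)-w(t_1)=\int_{t_1}^{t_2}\dot w(s)\,ds\leq 0$ for $t_1\leq t_2$), this shows that $w$ is nonincreasing on $[0,+\infty)$.

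Next I would establish that $w$ is bounded from below. By hypothesis there is a constant $m\in\R$ with $u(t)\geq m$ for all $t$, and since $V$ is bounded, say $|V(t)|\leq C$ for all $t$, we obtain $w(t)=u(t)-V(t)\geq m-C$ for every $t\in[0,+\infty)$. A nonincreasing function that is bounded from below necessarily converges to a finite limit as $t\to+\infty$; call it $\ell:=\lim_{t\to+\infty}w(t)\in\R$. Finally, writing $u(t)=w(t)+V(t)$ and letting $t\to+\infty$, both summands converge to finite limits, so
\[
\lim_{t\to+\infty}u(t)=\ell+V(+\infty)\in\R,
\]
which is the desired conclusion.

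There is no serious obstacle here; the argument is entirely standard. The only points that merit a line of justification are that $v\in L^1$ guarantees the convergence (and hence boundedness) of $V$, and that a locally absolutely continuous function with almost-everywhere nonpositive derivative is genuinely nonincreasing, which rests on the absolute-continuity form of the fundamental theorem of calculus rather than on classical differentiability.
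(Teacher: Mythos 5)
Your proof is correct and complete: the subtraction trick $w(t)=u(t)-\int_0^t v(s)\,ds$ yields a nonincreasing, bounded-below, locally absolutely continuous function, and all the auxiliary facts you invoke (boundedness and convergence of $V$ from $v\in L^1$, monotonicity from the a.e.\ differential inequality via the fundamental theorem of calculus for absolutely continuous functions) are justified. Note that the paper itself gives no proof of this lemma---it is recalled verbatim from the reference \cite{abat14} (Lemma 5.1 there)---and your argument is precisely the standard one underlying that cited result, so there is nothing in the paper to diverge from.
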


\begin{lemma}\label{PAMADS_lemma:Lp_lim0}
	Assume that $1 \leq p < \infty, \  1\leq r \leq \infty,  \ u:[0,+\infty)\to[0,+\infty)$ is locally absolutely continuous, $u \in L^p([0,+\infty),\R), \ v:[0,+\infty)\to \R, \ v \in L^r([0,+\infty),\R)$ and for almost every $t \in [0,+\infty)$
	\[\frac{d}{dt}u(t)\leq v(t).\]
	Then $\lim\limits_{t \to +\infty}u(t)=0$.
\end{lemma}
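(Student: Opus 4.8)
The plan is to argue by contradiction, exploiting the integrability $u\in L^p$ together with the one-sided differential inequality, which bounds how fast $u$ can \emph{increase}. First I would record the elementary consequence of $u\in L^p$ with $p<\infty$: for every $\varepsilon>0$ the super-level set $\{t\ge 0: u(t)>\varepsilon\}$ has finite Lebesgue measure, since $\int_0^{+\infty}u^p\ge \varepsilon^p\,|\{u>\varepsilon\}|$. Thus $u$ cannot stay bounded away from $0$ on a set of infinite measure. Because $u\ge 0$, it suffices to show $\limsup_{t\to+\infty}u(t)=0$.

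So suppose, for contradiction, that $\limsup_{t\to+\infty}u(t)>0$. Then there are $\varepsilon_0>0$ and a sequence $t_n\to+\infty$ with $u(t_n)\ge 2\varepsilon_0$. Since $u$ is locally absolutely continuous, for $s\le t_n$ the fundamental theorem of calculus together with $\dot u\le v\le v_+$ (where $v_+:=\max\{v,0\}$) gives
\[
u(s)\ \ge\ u(t_n)-\int_s^{t_n}v_+(\tau)\,d\tau\ \ge\ 2\varepsilon_0-\int_s^{t_n}v_+(\tau)\,d\tau .
\]
The crux is therefore to produce one length $\delta>0$, independent of $n$, with $\int_{t_n-\delta}^{t_n}v_+\le\varepsilon_0$ for all large $n$; this forces $u\ge\varepsilon_0$ on the whole interval $[t_n-\delta,t_n]$.

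This uniform short-interval estimate is where the range of $r$ enters, and I expect it to be the main technical point. For $r=\infty$ one has $v_+\le\|v\|_{L^\infty}$ a.e., so $\int_{t_n-\delta}^{t_n}v_+\le\delta\|v\|_{L^\infty}$ and it is enough to take $\delta=\varepsilon_0/(\|v\|_{L^\infty}+1)$. For $1<r<\infty$, H\"older's inequality on the short interval yields $\int_{t_n-\delta}^{t_n}v_+\le\delta^{\,1-1/r}\,\|v_+\|_{L^r}$, so any $\delta$ with $\delta^{\,1-1/r}\|v_+\|_{L^r}\le\varepsilon_0$ works. For $r=1$ one uses instead that $v_+\in L^1$ forces the tail $\int_{s}^{+\infty}v_+\to 0$ as $s\to+\infty$, so for any fixed $\delta$ (say $\delta=1$) and all $n$ large enough the bound holds. (If $v\le 0$ a.e.\ the statement is immediate, since then $u$ is nonincreasing and nonnegative.)

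With such a $\delta>0$ fixed, I would pass to a subsequence so that the intervals $[t_n-\delta,t_n]$ are pairwise disjoint (possible as $t_n\to+\infty$), noting that on each of them $u\ge\varepsilon_0$. Then
\[
\int_0^{+\infty}u^p\ \ge\ \sum_n\int_{t_n-\delta}^{t_n}u^p\ \ge\ \sum_n\varepsilon_0^{\,p}\,\delta\ =\ +\infty,
\]
contradicting $u\in L^p$. Hence $\limsup_{t\to+\infty}u(t)=0$, and since $u\ge 0$ this yields $\lim_{t\to+\infty}u(t)=0$, as claimed. Every step except the uniform control of $\int v_+$ over short intervals is routine bookkeeping.
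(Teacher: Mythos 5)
Your proof is correct. One important point of comparison: the paper does not actually prove this lemma at all --- it is recalled verbatim from the reference \cite{abat14} (Lemma 5.2 there), so there is no in-paper argument to match; your self-contained proof supplies something the paper deliberately omits. Your contradiction scheme is sound: from $u(t_n)\ge 2\varepsilon_0$ and $\dot u\le v\le v_+$ you correctly get $u(s)\ge 2\varepsilon_0-\int_s^{t_n}v_+(\tau)\,d\tau$ for $s\le t_n$, and your three-way case analysis ($r=\infty$ via the essential-sup bound, $1<r<\infty$ via H\"older on a short interval, $r=1$ via the vanishing tail of an $L^1$ function) does produce a width $\delta>0$ independent of $n$ (for $r=1$ only for $n$ large, which suffices); disjointness of the intervals $[t_n-\delta,t_n]$ along a subsequence then contradicts $u\in L^p$. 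For comparison, the argument usually given in the literature for this lemma is direct rather than by contradiction: one averages the inequality $u(t)\le u(s)+\int_{t-\delta}^{t}v_+(\tau)\,d\tau$ over $s\in[t-\delta,t]$ to obtain
\begin{equation*}
u(t)\;\le\;\frac{1}{\delta}\int_{t-\delta}^{t}u(s)\,ds\;+\;\int_{t-\delta}^{t}v_+(\tau)\,d\tau,
\end{equation*}
after which the first term tends to $0$ as $t\to+\infty$ because $u\in L^p$, and the second is handled by exactly the same $r$-dependent estimates you use (letting $\delta\to 0$ at the end when $r=\infty$). Both routes rest on the identical quantitative ingredient --- uniform smallness of $\int v_+$ over short intervals --- but the direct version avoids extracting subsequences and disjoint intervals, while yours avoids the averaging trick; either is a complete proof.
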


In the following we have the result which states the asymptotic convergence of the trajectories generated by the dynamical system \eqref{eq:proxAMA-DS} to a saddle point of the Lagrangian of the problem \eqref{opt:Prox-AMA:primal}. The derivation of the result via Lyapunov analysis is involved.

\begin{theorem}\label{PAMADS:th:convergence}
	In the setting of the optimization problem \eqref{opt:Prox-AMA:primal}, assume that the set of saddle points of the Lagrangian $L$ is nonempty, the maps
	\[
	[0,+\infty)\to S_+(\HH), t \mapsto M_1(t), \quad\text{and} ~[0,+\infty)\to S_+(\GG), t \mapsto M_2(t)
	\]
	are locally absolutely continuous and monotonically decreasing in the sense of the Loewner partial ordering defined in \eqref{Loewner},
	\[
	M_1(t)-\frac{L_{h_1}}{4}I\in S_+(\HH),
	\]
	\[
	M_2(t)-\frac{L_{h_2}}{4}I\in S_+(\GG) \quad \forall t \in [0,+\infty),
	\]
	and
	\[
	\sup_{t\geq 0}\|\dot{M}_1(t)\|<+\infty ~\text{and}~ \sup_{t\geq 0}\|\dot{M}_2(t)\|<+\infty.
	\]
	Furthermore we assume that for $0<\epsilon<\frac{\sigma}{2\|A\|^2}$ the function $c:[0,+\infty)\to \left[\epsilon, \frac{\sigma}{\|A\|^2}-\epsilon\right]$ is  monotonically decreasing and Lipschitz continuous. If $c(t)$ is a constant function, namely $c(t)=c$ for all $t \in [0,+\infty)$, then it is enough to assume that $\epsilon \leq c \leq\frac{2\sigma}{\|A\|^2}-\epsilon$.
	For an arbitrary starting point $(x^0,z^0,y^0)\in \HH \times \GG \times \mathcal{K}$, let $(x,z,y): [0,+\infty)\to\HH \times \GG \times \mathcal{K}$ be the unique strong global solution of the dynamical system \eqref{eq:proxAMA-DS}. If one of the following conditions holds:
	\begin{enumerate}
		\item there exists $\alpha>0$ such that $M_2(t)-\frac{L_{h_2}}{4}I\in P_\alpha(\GG)$ for every $t \in [0,+\infty)$
		\item there exists $\beta>0$ such that $B^*B \in P_\beta(\GG)$;
	\end{enumerate}
	then the trajectory $(x(t),z(t),y(t))$ converges weakly to a saddle point of $L$ as $t \to + \infty$.
\end{theorem}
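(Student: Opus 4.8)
The plan is to run a Lyapunov analysis anchored at an arbitrary fixed saddle point $(x^*,z^*,y^*)$ of $L$, which by the optimality system \eqref{opt-cond} satisfies $A^*y^*-\nabla h_1(x^*)\in\partial f(x^*)$, $B^*y^*-\nabla h_2(z^*)\in\partial g(z^*)$ and $Ax^*+Bz^*=b$. First I would rewrite the inclusions of \eqref{eq:proxAMA-DS} in residual form, extracting the monotone operators: the first gives $A^*y(t)-\nabla h_1(x(t))-M_1(t)\dot{x}(t)\in\partial f(x(t)+\dot{x}(t))$, and, after substituting $cB^*b=cB^*Ax^*+cB^*Bz^*$ into the affine part, the second gives $B^*y(t)-\nabla h_2(z(t))-c(t)B^*A(x(t)+\dot{x}(t)-x^*)-c(t)B^*B(z(t)+\dot{z}(t)-z^*)-M_2(t)\dot{z}(t)\in\partial g(z(t)+\dot{z}(t))$, while the dual law reads $\tfrac{1}{c(t)}\dot{y}(t)=A(x^*-x(t)-\dot{x}(t))+B(z^*-z(t)-\dot{z}(t))$. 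Pairing the first two against $(x+\dot{x})-x^*$ and $(z+\dot{z})-z^*$ respectively and invoking the $\sigma$-strong monotonicity of $\partial f$ and the monotonicity of $\partial g$ produces the two cornerstone inequalities, one carrying the coercive term $\sigma\|x(t)+\dot{x}(t)-x^*\|^2$.

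I would then study the energy functional
\[
E(t)=\tfrac{1}{2}\|x(t)-x^*\|^2+\tfrac{1}{2}\|z(t)-z^*\|^2_{M_2(t)+c(t)B^*B}+\tfrac{1}{2c(t)}\|y(t)-y^*\|^2,
\]
differentiating it with the product rule \eqref{diff} (mind the $t$-dependence of $M_2(t)$, $c(t)$ and $1/c(t)$) and inserting the cornerstone inequalities. The heart of the argument is the cancellation and sign bookkeeping: the dual law is precisely weighted by $1/c(t)$ so that all bilinear couplings $\langle y-y^*,A(\cdot)+B(\cdot)\rangle$ disappear; the hypotheses $M_i(t)-\tfrac{L_{h_i}}{4}I\in S_+$ absorb the gradient increments $\nabla h_i(\cdot)-\nabla h_i(\cdot)$ through their $L_{h_i}$-Lipschitz continuity and Young's inequality; the decrease of $M_1,M_2,c$ in the Loewner order \eqref{Loewner} makes the terms carrying $\dot{M}_2(t)$ and $\dot{c}(t)$ in the $z$-metric nonpositive; and the bound $c(t)\le\tfrac{\sigma}{\|A\|^2}-\epsilon$ lets the $\sigma\|x+\dot{x}-x^*\|^2$ dissipation dominate the cross term $c\,B^*A(x+\dot{x}-x^*)$ coupling the two blocks. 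The only genuinely adverse term is $-\tfrac{\dot{c}(t)}{2c(t)^2}\|y(t)-y^*\|^2\ge 0$ produced by the weight $1/c(t)$; since $c$ is monotone and bounded below, $\dot{c}\in L^1([0,+\infty))$, so a preliminary Gronwall step bounds $E$ and afterwards this term is a bona fide $L^1$ remainder. I expect this estimate—the exact constants and the grouping that makes every term land with the correct sign, including the sharper constant-$c$ bound $2\sigma/\|A\|^2$ available only when $\dot{c}\equiv 0$—to be the main obstacle, as the authors themselves flag in the introduction.

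The outcome of the previous step is a differential inequality
\[
\dot{E}(t)\le -\delta\big(\|\dot{x}(t)\|^2+\|\dot{z}(t)\|^2+\|\dot{y}(t)\|^2\big)+v(t),\qquad v\in L^1([0,+\infty)),\ \delta>0.
\]
Lemma \ref{PAMADS_lemma:Lp_lim} then yields that $\lim_{t\to+\infty}E(t)$ exists in $\R$; in particular the trajectory is bounded and $\dot{x},\dot{z},\dot{y}\in L^2([0,+\infty))$. To pass from square-integrability to $\dot{x}(t),\dot{z}(t),\dot{y}(t)\to 0$, I would use Lemma \ref{PAMADS_lemma:xz_dot}, which bounds $\|\ddot{x}\|+\|\ddot{z}\|$ by the velocities and the uniformly bounded $\|\dot{M}_i\|,|\dot{c}|$, so that $\tfrac{d}{dt}\|\dot{x}\|^2$ and $\tfrac{d}{dt}\|\dot{z}\|^2$ are majorized by $L^1$ functions, and then apply Lemma \ref{PAMADS_lemma:Lp_lim0}; the statement for $\dot{y}$ follows from the dual law. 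Integrating the dissipation also gives $x+\dot{x}-x^*\in L^2$, whence $x(t)+\dot{x}(t)\to x^*$ and, since $\dot{x}(t)\to 0$, the $x$-component converges strongly, $x(t)\to x^*$ (consistent with the uniqueness in Remark \ref{1remark-sec3}). The two alternatives pin down the $z$-block: under (1), coercivity of $M_2(t)-\tfrac{L_{h_2}}{4}I$ injects an extra dissipative term $\alpha\|z+\dot{z}-z^*\|^2$; under (2), coercivity of $B^*B$ together with $\tfrac1c\dot{y}=A(x^*-x-\dot{x})+B(z^*-z-\dot{z})\to 0$ and $x+\dot{x}\to x^*$ forces $z(t)+\dot{z}(t)\to z^*$.

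Finally I would conclude by a continuous Opial argument. Since $M_2$ and $c$ are monotone and bounded they admit limits $M_2^\infty$ and $c^\infty>0$, so $E(t)$ is asymptotically comparable to the fixed-metric functional and $\lim_t E(t)$ exists for the functional anchored at \emph{every} saddle point. Given any weak sequential cluster point $(\bar{x},\bar{z},\bar{y})$ of the bounded trajectory along some $t_n\to+\infty$, I would pass to the limit in the three residual inclusions using $\dot{x}(t_n),\dot{z}(t_n),\dot{y}(t_n)\to 0$, the weak--strong sequential closedness of the graphs of the maximally monotone operators $\partial f$ and $\partial g$, the continuity of $\nabla h_1,\nabla h_2$, and the convergence $M_i(t_n)\to M_i^\infty$, $c(t_n)\to c^\infty$; this shows $(\bar{x},\bar{z},\bar{y})$ satisfies \eqref{opt-cond} and is hence a saddle point of $L$. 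As the limit of $E$ anchored at each saddle point exists and every cluster point is a saddle point, the Opial lemma delivers weak convergence of the whole trajectory $(x(t),z(t),y(t))$ to a single saddle point of $L$, which completes the argument.
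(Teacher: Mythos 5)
Your plan follows the same architecture as the paper's proof: anchor at a saddle point, pair the two inclusions against $(x+\dot x)-x^*$ and $(z+\dot z)-z^*$ via strong monotonicity of $\partial f$ and monotonicity of $\partial g$, build a Lyapunov functional containing $\|x-x^*\|^2$, $\|z-z^*\|^2_{M_2(t)+c(t)B^*B}$ and a $y$-term, integrate, invoke Lemma \ref{PAMADS_lemma:xz_dot} together with Lemmas \ref{PAMADS_lemma:Lp_lim} and \ref{PAMADS_lemma:Lp_lim0}, and finish with an Opial-type argument. One genuine (and legitimate) deviation: you keep the weight $\tfrac{1}{2c(t)}$ on the $y$-term and treat the resulting adverse term $-\tfrac{\dot c}{2c^2}\|y-y^*\|^2\geq 0$ by Gronwall plus $|\dot c|/c\in L^1$, whereas the paper multiplies the whole estimate by $c(t)$ and pulls the time-dependent weights \emph{inside} the derivative (see \eqref{PAMADS:eq:sum3a}), so that every $\dot c$-term is nonpositive thanks to $\dot c\le 0$ and $c\le\sigma/\|A\|^2$; your variant would work and is arguably more flexible. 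Two smaller inaccuracies: your $E(t)$ omits the $\|x-x^*\|^2_{M_1(t)}$ term, yet the pairing $\langle -M_1(t)\dot x,\dot x+x-x^*\rangle$ produces $-\tfrac12\tfrac{d}{dt}\|x-x^*\|^2_{M_1(t)}$, which is not sign-definite and must live inside the functional (easy fix); and ``Lipschitz continuity and Young's inequality'' is not the right tool for the smooth terms --- for the $z$-block there is \emph{no} $\|z-z^*\|^2$ dissipation anywhere in the estimate, so the positive $\|z-z^*\|^2$ leftover that Young's inequality creates cannot be absorbed. What the paper uses, and what your constants $L_{h_i}/4$ implicitly require, is cocoercivity of $\nabla h_i$ via the Baillon--Haddad theorem, which yields exactly $-\tfrac{1}{L_{h_i}}\|\nabla h_i(\cdot)-\nabla h_i(\cdot^*)+\tfrac{L_{h_i}}{2}\dot{(\cdot)}\|^2+\tfrac{L_{h_i}}{4}\|\dot{(\cdot)}\|^2$ with no residue.

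The genuine gap is in the identification of weak cluster points. For the $f$-inclusion your argument is fine, because $\dot x(s_n)+x(s_n)\to x^*$ \emph{strongly} while the subgradient converges weakly, so strong$\times$weak closedness of the graph applies (this is exactly the paper's use of Proposition 20.33 of \cite{baco17}). But for the $g$-inclusion, along the sequence $s_n$ \emph{both} the point $\dot z(s_n)+z(s_n)\rightharpoonup\overline z$ and the subgradient $B^*y(s_n)+\dots$ converge only weakly (since $y(s_n)$ converges only weakly), and graphs of maximally monotone operators are in general \emph{not} sequentially weak$\times$weak closed in infinite dimensions; the ``weak--strong sequential closedness'' you invoke is simply not available here. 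This is precisely why the paper's proof does not pass to the limit in $\partial g$ directly: it rewrites the inclusion as $v_n\in\partial(g+h_2)^*(B^*u_n+w_n)$ with $v_n=\dot z(s_n)+z(s_n)$, $u_n=\dot y(s_n)+y(s_n)$, $w_n\to 0$, and exploits the \emph{strong} convergence $Bv_n\to Bz^*$ (available from \eqref{PAMADS:eq_conv_x^*_Bz^*} and $\dot z\to 0$) so that in the monotonicity inequality $\langle Bv_n-Bv,u_n\rangle+\langle v_n-v,w_n-u\rangle\geq 0$ every inner product pairs a strongly convergent with a weakly convergent sequence; maximal monotonicity of $\partial(g+h_2)^*$ then gives $B^*\overline y-\nabla h_2(\overline z)\in\partial g(\overline z)$. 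Without this (or an equivalent argument verifying the $\limsup$ condition for weak$\times$weak limits), your limit passage for the $z$-block does not go through, so this step needs to be repaired before the Opial argument can be applied.
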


\begin{proof}
We need an appropriate energy functional in order to conclude. This will be accomplished in \eqref{PAMADS:eq:sum3a} below.
	Let $(x^*,z^*,y^*) \in \HH \times \GG \times \mathcal{K}$ be a saddle point of the Lagrangian $L$. Then it fulfills the system of the optimality conditions
	\[
	\begin{cases}
	A^*y^*-\nabla h_1(x^*) \in \partial f(x^*)\\
	B^*y^*-\nabla h_2(z^*) \in \partial g(z^*)\\
	Ax^*+Bz^*=b.
	\end{cases}
	\]
	From \eqref{PAMADS:OC_DS_1} we have for almost every $t \in [0,+\infty)$
	\begin{equation*}
	-M_1(t)\dot{x}(t)+A^*y(t)-\nabla h_1 (x(t)) \in \partial f(\dot{x}(t)+x(t)),
	\end{equation*}
	and by taking into account the strong monotonicity of $\partial f$ we have
	\begin{equation}
	\langle -M_1(t)\dot{x}(t)+A^*(y(t)-y^*)-(\nabla h_1 (x(t))-\nabla h_1(x^*)),\dot{x}(t)+x(t)-x^*\rangle \geq \sigma \|\dot{x}(t)+x(t)-x^*\|^2.\label{PAMADS:monoton1}
	\end{equation}
	In an analog way, according to \eqref{PAMADS:OC_DS_2} we have for almost every $t \in [0,+\infty)$
	\begin{align*}
	&-c(t)B^*B(\dot{z}(t)+z(t))-M_2(t)(\dot{z}(t))+B^*y(t)-c(t)B^*A(\dot{x}(t)+x(t))+c(t)B^*b-\nabla h_2(z(t))\\
	& \in \partial g(\dot{z}(t)+z(t)),
	\end{align*}
	and by taking into account the monotonicity of $\partial g$ we have
	\begin{align}
	&\langle-c(t)B^*B(\dot{z}(t)+z(t))-M_2(t)(\dot{z}(t))+B^*(y(t)-y^*)-c(t)B^*A(\dot{x}(t)+x(t))+c(t)B^*b\nonumber\\
	&-(\nabla h_2(z(t))-\nabla h_2(z^*)),\dot{z}(t)+z(t)-z^*\rangle \geq 0. \label{PAMADS:monoton2}
	\end{align}
	We use the last equation of \eqref{eq:proxAMA-DS} and the optimality condition $Ax^*+Bz^*=b$ to obtain for almost every $t \in [0,+\infty)$
	\begin{align}
	\langle A^*&(y(t)-y^*), \dot{x}(t)+x(t)-x^*\rangle + \langle B^*(y(t)-y^*),\dot{z}(t)+z(t)-z^*\rangle\nonumber\\
	&=-\langle y(t)-y^*, -A(\dot{x}(t)+x(t))+Ax^*-B(\dot{z}(t)+z(t))+Bz^*\rangle\nonumber\\
	&=-\frac{1}{c(t)}\langle y(t)-y^*,\dot{y}(t)\rangle = -\frac{1}{2c(t)}\frac{d}{dt}\|y(t)-y^*\|^2.\label{PAMADS:eq:deriv_y}
	\end{align}
	Assume that $L_{h_1}>0$ and $L_{h_2}>0$. By using the Baillon-Haddad Theorem we know that the gradients of $h_1$ and $h_2$ are $L_1^{-1}$- and $L_2^{-1}$-cocoercive, respectively, we have for almost every $t\in[0,+\infty)$
	\begin{align}
	\langle - (\nabla h_1&(x(t))-\nabla h_1(x^*)),\dot{x}(t)+x(t)-x^*\rangle\nonumber\\
	&=-\langle\nabla h_1(x(t))-\nabla h_1(x^*),x(t)-x^*\rangle
	-\langle\nabla h_1(x(t))-\nabla h_1(x^*),\dot{x}(t)\rangle\nonumber\\
	&\leq -\frac{1}{L_{h_1}}\|\nabla h_1(x(t))-\nabla h_1(x^*)\|^2-\langle\nabla h_1(x(t))-\nabla h_1(x^*),\dot{x}(t)\rangle\nonumber\\
	&=-\frac{1}{L_{h_1}}\left(\left\|\nabla h_1(x(t))-\nabla h_1(x^*)+\frac{L_{h_1}}{2}\dot{x}(t)\right\|^2-\frac{L_{h_1}^2}{4}\left\|\dot{x}(t)\right\|^2\right)\label{PAMADS:eq:Lh1}
	\end{align}
	and respectively
	\begin{align}
	\langle - (\nabla h_2&(z(t))-\nabla h_2(z^*)),\dot{z}(t)+z(t)-z^*\rangle\nonumber\\
	&=-\frac{1}{L_{h_2}}\left(\left\|\nabla h_2(z(t))-\nabla h_2(z^*)+\frac{L_{h_2}}{2}\dot{z}(t)\right\|^2-\frac{L_{h_2}^2}{4}\left\|\dot{z}(t)\right\|^2\right).\label{PAMADS:eq:Lh2}
	\end{align}
	By summing up \eqref{PAMADS:monoton1} and \eqref{PAMADS:monoton2} and by taking into account \eqref{PAMADS:eq:deriv_y}, \eqref{PAMADS:eq:Lh1} and \eqref{PAMADS:eq:Lh2},
	we obtain for almost every $t\in[0,+\infty)$
	\begin{align}
	0\leq& \langle -M_1(t)\dot{x}(t),\dot{x}(t)+x(t)-x^*\rangle+\langle-c(t)B^*B(\dot{z}(t)+z(t))-M_2(t)(\dot{z}(t))\nonumber\\
	&-c(t)B^*A(\dot{x}(t)+x(t))+c(t)B^*b,\dot{z}(t)+z(t)-z^*\rangle-\frac{1}{2c(t)}\frac{d}{dt}\|y(t)-y^*\|^2\nonumber\\
	&-\frac{1}{L_{h_1}}\left(\left\|\nabla h_1(x(t))-\nabla h_1(x^*)+\frac{L_{h_1}}{2}\dot{x}(t)\right\|^2-\frac{L_{h_1}^2}{4}\left\|\dot{x}(t)\right\|^2\right)\nonumber\\
	&-\frac{1}{L_{h_2}}\left(\left\|\nabla h_2(z(t))-\nabla h_2(z^*)+\frac{L_{h_2}}{2}\dot{z}(t)\right\|^2-\frac{L_{h_2}^2}{4}\left\|\dot{z}(t)\right\|^2\right)-\sigma \|\dot{x}(t)+x(t)-x^*\|^2.\label{PAMADS:eq:sum1}
	\end{align}

	We have for almost every $t\in[0,+\infty)$ (use also the last equality for $\dot y$ in \eqref{eq:proxAMA-DS}):
	\begin{align}
	\langle-c(t)B&^*B(\dot{z}(t)+z(t))-c(t)B^*A(\dot{x}(t)+x(t))+c(t)B^*b,\dot{z}(t)+z(t)-z^*\rangle-\sigma \|\dot{x}(t)+x(t)-x^*\|^2\nonumber\\
	&=-\frac{1}{c(t)}\langle\dot{y}(t),-c(t)B(\dot{z}(t)+z(t)-z^*)\rangle-\sigma \|\dot{x}(t)+x(t)-x^*\|^2\nonumber\\
	&=-\frac{1}{c(t)}\left[\frac{1}{2}\|\dot{y}(t)\|^2+\frac{1}{2}\|c(t)B(\dot{z}(t)+z(t)-z^*)\|^2-\frac{1}{2}\|\dot{y}(t)+c(t)B(\dot{z}(t)+z(t)-z^*)\|^2\right]\nonumber\\
	&\quad-\sigma \|\dot{x}(t)+x(t)-x^*\|^2\nonumber\\
	&=-\frac{1}{c(t)}\left[\frac{1}{2}\|\dot{y}(t)\|^2+\frac{1}{2}c^2(t)\left[\|B(z(t)-z^*)\|^2+\|B\dot{z}(t)\|^2+2\langle\dot{z}(t),B^*B(z(t)-z^*)\rangle\right]\right.\nonumber\\
	&\quad\left.-\frac{1}{2}\|c(t)(b-A(x(t)+\dot{x}(t))-Bz^*)\|^2\right]
	-\sigma \|\dot{x}(t)+x(t)-x^*\|^2\nonumber\\
	&=-\frac{1}{c(t)}\left[\frac{1}{2}\|\dot{y}(t)\|^2+\frac{1}{2}c^2(t)\left[\|Bz(t)-Bz^*\|^2+\|B\dot{z}(t)\|^2+\frac{d}{dt}\|Bz(t)-Bz^*\|^2\right]\right.\nonumber\\
	&\quad\left.-\frac{1}{2}\|c(t)(-A(x(t)+\dot{x}(t))+Ax^*)\|^2\right]
	\quad-\sigma \|\dot{x}(t)+x(t)-x^*\|^2\nonumber\\
	&\leq -\frac{1}{c(t)}\left[\frac{1}{2}\|\dot{y}(t)\|^2+\frac{1}{2}c^2(t)\left[\|Bz(t)-Bz^*\|^2+\|B\dot{z}(t)\|^2+\frac{d}{dt}\|Bz(t)-Bz^*\|^2\right]\right]\nonumber\\
	&\quad+\left(\frac{1}{2}c(t)\|A\|^2-\sigma\right) \|\dot{x}(t)+x(t)-x^*\|^2\nonumber\\
	&=-\frac{1}{2c(t)}\|\dot{y}(t)\|^2-\frac{c(t)}{2}\|Bz(t)-Bz^*\|^2-\frac{c(t)}{2}\|B\dot{z}(t)\|^2-\frac{c(t)}{2}\frac{d}{dt}\|Bz(t)-Bz^*\|^2\nonumber\\
	&\quad+\left(\frac{1}{2}c(t)\|A\|^2-\sigma\right) \left(\|x(t)-x^*\|^2+\|\dot{x}(t)\|^2+\frac{d}{dt}\|x(t)-x^*\|^2\right).\label{PAMADS:eq:sum2a}
	\end{align}
	By using \eqref{diff} we observe that for almost every $t\in [0,+\infty)$ it holds
	\begin{align*}
	\langle -M_1(t)\dot{x}(t),\dot{x}(t)+x(t)-x^*\rangle&=-\|\dot{x}(t)\|^2_{M_1(t)}-\langle M_1(t)\dot{x}(t),x(t)-x^*\rangle\\
	&=-\|\dot{x}(t)\|^2_{M_1(t)}+\frac{1}{2}\langle \dot{M_1}(t)(x(t)-x^*),x(t)-x^*\rangle\\
	&\quad-\frac{1}{2}\frac{d}{dt}\|x(t)-x^*\|^2_{M_1(t)}
	\end{align*}
	and
	\begin{align*}
	\langle -M_2(t)\dot{z}(t),\dot{z}(t)+z(t)-z^*\rangle&=-\|\dot{z}(t)\|^2_{M_2(t)}-\langle M_2(t)\dot{z}(t),z(t)-z^*\rangle\\
	&=-\|\dot{z}(t)\|^2_{M_2(t)}+\frac{1}{2}\langle \dot{M_2}(t)(z(t)-z^*),z(t)-z^*\rangle\\
	&\quad-\frac{1}{2}\frac{d}{dt}\|z(t)-z^*\|^2_{M_2(t)}.
	\end{align*}
	By plugging the last two identities and \eqref{PAMADS:eq:sum2a} into \eqref{PAMADS:eq:sum1}, we obtain for almost every $t\in [0,+\infty)$
	\begin{align}
	0 &\leq -\frac{1}{2c(t)}\|\dot{y}(t)\|^2-\frac{c(t)}{2}\|B\dot{z}(t)\|^2-\left(\sigma-\frac{1}{2}c(t)\|A\|^2\right) (\|x(t)-x^*\|^2+\|\dot{x}(t)\|^2)-\frac{c(t)}{2}\|Bz(t)-Bz^*\|^2\nonumber\\
	&\quad-\frac{1}{2}\left((2\sigma-c(t)\|A\|^2) \frac{d}{dt}\|x(t)-x^*\|^2+\frac{d}{dt}\|x(t)-x^*\|_{M_1(t)}^2+c(t)\frac{d}{dt}\|Bz(t)-Bz^*\|^2\right.\nonumber\\
	&\quad\left.+\frac{d}{dt}\|z(t)-z^*\|_{M_2(t)}^2+\frac{1}{c(t)}\frac{d}{dt}\|y(t)-y^*\|^2\right)-\|\dot{x}(t)\|^2_{M_1(t)}+\frac{1}{2}\langle \dot{M_1}(t)(x(t)-x^*),x(t)-x^*\rangle\nonumber\\
	&\quad-\|\dot{z}(t)\|^2_{M_2(t)}+\frac{1}{2}\langle \dot{M_2}(t)(z(t)-z^*),z(t)-z^*\rangle-\frac{1}{L_{h_1}}\left\|\nabla h_1(x(t))-\nabla h_1(x^*)+\frac{L_{h_1}}{2}\dot{x}(t)\right\|^2\nonumber\\
	&\quad+\frac{L_{h_1}}{4}\|\dot{x}(t)\|^2-\frac{1}{L_{h_2}}\left\|\nabla h_2(z(t))-\nabla h_2(z^*)+\frac{L_{h_2}}{2}\dot{z}(t)\right\|^2+\frac{L_{h_2}}{4}\|\dot{z}(t)\|^2\nonumber.
	\end{align}
	Taking into account that
	\begin{align*}
	&-\frac{1}{2}\left((2\sigma-c(t)\|A\|^2) \frac{d}{dt}\|x(t)-x^*\|^2+\frac{d}{dt}\|x(t)-x^*\|_{M_1(t)}^2+c(t)\frac{d}{dt}\|Bz(t)-Bz^*\|^2\right.\nonumber\\
	&\quad\left.+\frac{d}{dt}\|z(t)-z^*\|_{M_2(t)}^2+\frac{1}{c(t)}\frac{d}{dt}\|y(t)-y^*\|^2\right)\\
	=&-\frac{1}{2c(t)}\left(\frac{d}{dt}(2\sigma c(t)-c^2(t)\|A\|^2)\|x(t)-x^*\|^2-(2\dot{c}(t)\sigma-2c(t)\dot{c}(t)\|A\|^2)\|x(t)-x^*\|^2\right.\\
	&\left.+\frac{d}{dt}(c(t)\|x(t)-x^*\|_{M_1(t)}^2)-\dot{c}(t)\|x(t)-x^*\|_{M_1(t)}^2+\frac{d}{dt}(c^2(t)\|Bz(t)-Bz^*\|^2)\right.\\
	&\left.-2c(t)\dot{c}(t)\|Bz(t)-Bz^*\|^2+\frac{d}{dt}(c(t)\|z(t)-z^*\|_{M_2(t)}^2)-\dot{c}(t)\|z(t)-z^*\|_{M_2(t)}^2+\frac{d}{dt}\|y(t)-y^*\|^2\right)\\
	=&-\frac{1}{2c(t)}\frac{d}{dt}\left((2\sigma c(t)-c^2(t)\|A\|^2)\|x(t)-x^*\|^2+\|x(t)-x^*\|_{c(t)M_1(t)}^2\right.\\
	&\left.+\|z(t)-z^*\|_{c(t)M_2(t)+c^2(t)B^*B}^2+\|y(t)-y^*\|^2\right)\\
	&+\frac{\dot{c}(t)}{2c(t)}\left(2(\sigma-c(t)\|A\|^2)\|x(t)-x^*\|^2+\|x(t)-x^*\|_{M_1(t)}^2+2c(t)\|Bz(t)-Bz^*\|^2+\|z(t)-z^*\|_{M_2(t)}^2\right),
	\end{align*}
	we obtain that
	\begin{align}
	0 &\leq
	-\frac{1}{2c(t)}\frac{d}{dt}\left((2\sigma c(t)-c^2(t)\|A\|^2)\|x(t)-x^*\|^2+\|x(t)-x^*\|_{c(t)M_1(t)}^2\right.\nonumber\\
	&\left.\quad+\|z(t)-z^*\|_{c(t)M_2(t)+c^2(t)B^*B}^2+\|y(t)-y^*\|^2\right)\nonumber\\
	&\quad+\frac{\dot{c}(t)}{2c(t)}\left(2(\sigma-c(t)\|A\|^2)\|x(t)-x^*\|^2+\|x(t)-x^*\|_{M_1(t)}^2+2c(t)\|Bz(t)-Bz^*\|^2+\|z(t)-z^*\|_{M_2(t)}^2\right)\nonumber\\
	&\quad-\frac{1}{2c(t)}\|\dot{y}
	(t)\|^2-\frac{c(t)}{2}\|B\dot{z}(t)\|^2-\left(\sigma-\frac{1}{2}c(t)\|A\|^2\right) (\|x(t)-x^*\|^2+\|\dot{x}(t)\|^2)-\frac{c(t)}{2}\|Bz(t)-Bz^*\|^2\nonumber\\
	&\quad-\|\dot{x}(t)\|^2_{M_1(t)}+\frac{1}{2}\langle \dot{M_1}(t)(x(t)-x^*),x(t)-x^*\rangle\nonumber\\
	&\quad-\|\dot{z}(t)\|^2_{M_2(t)}+\frac{1}{2}\langle \dot{M_2}(t)(z(t)-z^*),z(t)-z^*\rangle-\frac{1}{L_{h_1}}\left\|\nabla h_1(x(t))-\nabla h_1(x^*)+\frac{L_{h_1}}{2}\dot{x}(t)\right\|^2\nonumber\\
	&\quad+\frac{L_{h_1}}{4}\|\dot{x}(t)\|^2-\frac{1}{L_{h_2}}\left\|\nabla h_2(z(t))-\nabla h_2(z^*)+\frac{L_{h_2}}{2}\dot{z}(t)\right\|^2+\frac{L_{h_2}}{4}\|\dot{z}(t)\|^2\nonumber.
	\end{align}
	Since $\dot{c}(t)\leq 0$, $0 < c(t) \leq \frac{\sigma}{\|A\|^2}$ if $c(t)$ is not constant (if $c(t)$ is constant we have $\dot{c}(t)=0$) and $\langle \dot{M_1}(t)(x(t)-x^*),x(t)-x^*\rangle\leq 0$ and $\langle \dot{M_2}(t)(z(t)-z^*),z(t)-z^*\rangle\leq 0$ (which follows easily from Definition \ref{def-mdot} and the decreasing property of $M_1$ and $M_2$) we have for almost every $t\in [0,+\infty)$
	\begin{align}
	0 &\geq \frac{1}{2}\frac{d}{dt}\left((2\sigma c(t)-c^2(t)\|A\|^2)\|x(t)-x^*\|^2+\|x(t)-x^*\|_{c(t)M_1(t)}^2\right.\nonumber\\
	&\left.\quad+\|z(t)-z^*\|_{c(t)M_2(t)+c^2(t)B^*B}^2+\|y(t)-y^*\|^2\right)\nonumber\\
	&\quad+c(t)\|\dot{x}(t)\|^2_{M_1(t)-\frac{L_{h_1}}{4}I}+c(t)\left(\sigma-\frac{1}{2}c(t)\|A\|^2\right)\|\dot{x}(t)\|^2+c(t)\|\dot{z}(t)\|^2_{M_2(t)+\frac{c(t)}{2}B^*B-\frac{L_{h_2}}{4}I}+\frac{1}{2}\|\dot{y}(t)\|^2\nonumber\\
	&\quad+c(t)\left(\sigma-\frac{1}{2}c(t)\|A\|^2\right) \|x(t)-x^*\|^2+\frac{c^2(t)}{2}\|Bz(t)-Bz^*\|^2\nonumber\\
	&\quad+\frac{c(t)}{L_{h_1}}\left\|\nabla h_1(x(t))-\nabla h_1(x^*)+\frac{L_{h_1}}{2}\dot{x}(t)\right\|^2+\frac{c(t)}{L_{h_2}}\left\|\nabla h_2(z(t))-\nabla h_2(z^*)+\frac{L_{h_2}}{2}\dot{z}(t)\right\|^2\nonumber.
	\end{align}
	For $\underline{c}:=\epsilon$ and $\overline{c}:=\frac{2\sigma}{\|A\|^2}-\epsilon$ we have
	\begin{align}
	0 &\geq \frac{1}{2}\frac{d}{dt}\left((2\sigma c(t)-c^2(t)\|A\|^2)\|x(t)-x^*\|^2+\|x(t)-x^*\|_{c(t)M_1(t)}^2\right.\nonumber\\
	&\left.\quad+\|z(t)-z^*\|_{c(t)M_2(t)+c^2(t)B^*B}^2+\|y(t)-y^*\|^2\right)\nonumber\\
	&\quad+\underline{c}\|\dot{x}(t)\|^2_{M_1(t)-\frac{L_{h_1}}{4}I}+\underline{c}\left(\sigma-\frac{1}{2}\overline{c}\|A\|^2\right)\|\dot{x}(t)\|^2+\underline{c}\|\dot{z}(t)\|^2_{M_2(t)+\frac{c(t)}{2}B^*B-\frac{L_{h_2}}{4}I}+\frac{1}{2}\|\dot{y}(t)\|^2\nonumber\\
	&\quad+\underline{c}\left(\sigma-\frac{1}{2}\overline{c}\|A\|^2\right) \|x(t)-x^*\|^2+\frac{\underline{c}^2}{2}\|Bz(t)-Bz^*\|^2\nonumber\\
	&\quad+\frac{\underline{c}}{L_{h_1}}\left\|\nabla h_1(x(t))-\nabla h_1(x^*)+\frac{L_{h_1}}{2}\dot{x}(t)\right\|^2+\frac{\underline{c}}{L_{h_2}}\left\|\nabla h_2(z(t))-\nabla h_2(z^*)+\frac{L_{h_2}}{2}\dot{z}(t)\right\|^2\label{PAMADS:eq:sum3a}.
	\end{align}
	From Lemma ~\ref{PAMADS_lemma:Lp_lim}~ we have
	\begin{align}
	\exists \lim\limits_{t \to + \infty}\left((2\sigma c(t)-c^2(t)\|A\|^2)\|x(t)-x^*\|^2+\|x(t)-x^*\|_{c(t)M_1(t)}^2\right.\nonumber\\
	\left.+\|z(t)-z^*\|_{c(t)M_2(t)+c^2(t)B^*B}^2+\|y(t)-y^*\|^2\right) \in \R.\label{PAMADS:lim_xzy}
	\end{align}
	Let $T>0$. By integrating \eqref{PAMADS:eq:sum3a} on the interval $[0,T]$ we obtain
	\begin{align*}
	\frac{1}{2}&\left((2\sigma c(T)-c^2(T)\|A\|^2)\|x(T)-x^*\|^2+\|x(T)-x^*\|_{c(T)M_1(T)}^2\right.\\
	&\left.+\|z(T)-z^*\|_{c(T)M_2(T)+c^2(T)B^*B}^2+\|y(T)-y^*\|^2\right)\\
	&+\underline{c}\int_{0}^{T}\|\dot{x}(t)\|^2_{M_1(t)-\frac{L_{h_1}}{4}I}dt+\underline{c}\left(\sigma-\frac{1}{2}\overline{c}\|A\|^2\right)\int_{0}^{T}\|\dot{x}(t)\|^2dt+\underline{c}\int_{0}^{T}\|\dot{z}(t)\|^2_{M_2(t)+\frac{c(t)}{2}B^*B-\frac{L_{h_2}}{4}I}dt\\
	&\quad+\frac{1}{2}\int_{0}^{T}\|\dot{y}(t)\|^2dt+ \underline{c}\left(\sigma-\frac{1}{2}\overline{c}\|A\|^2\right)\int_{0}^{T}\|x(t)-x^*\|^2dt+\frac{\underline{c}^2}{2}\int_{0}^{T}\|Bz(t)-Bz^*\|^2dt\\
	&+\frac{\underline{c}}{L_{h_1}}\int_{0}^{T}\left\|\nabla h_1(x(t))-\nabla h_1(x^*)+\frac{L_{h_1}}{2}\dot{x}(t)\right\|^2dt+\frac{\underline{c}}{L_{h_2}}\int_{0}^{T}\left\|\nabla h_2(z(t))+\nabla h_2(z^*)+\frac{L_{h_2}}{2}\dot{z}(t)\right\|^2dt\\
	&\leq\frac{1}{2}\left((2\sigma c(0)-c^2(0)\|A\|^2)\|x_0-x^*\|^2+\|x_0-x^*\|_{c(0)M_1(0)}^2+\|z_0-z^*\|_{c(0)M_2(0)+c^2(0)B^*B}^2+\|y_0-y^*\|^2\right).
	\end{align*}
	Letting $T$ converge to $+\infty$ we have
	\begin{align}
	\|\dot{x}(\cdot)\|^2_{M_1(\cdot)-\frac{L_{h_1}}{4}I}\in L^1([0,+\infty),\R), \quad 	\|\dot{x}(\cdot)\|^2\in L^1([0,+\infty),\R),\label{PAMADS:eq_conv_x_dot}\\
	\|\dot{z}(\cdot)\|^2_{M_2(\cdot)+\frac{c(\cdot)}{2}B^*B-\frac{L_{h_2}}{4}I}\in L^1([0,+\infty),\R),\label{PAMADS:eq_conv_z_dot}\\
	\dot{y}(\cdot) \in  L^2([0,+\infty),\mathcal{K}),\label{PAMADS:eq_conv_y_dot}\\
	 x(\cdot)-x^* \in L^2([0,+\infty),\HH), \quad Bz(\cdot)-Bz^*\in L^2([0,+\infty),\HH).\label{PAMADS:eq_conv_x^*_Bz^*}
	\end{align}
	In the case when $L_{h_1}=0$ and $L_{h_2}>0$, we have that $\nabla h_1$ is constant and instead of \eqref{PAMADS:eq:sum3a} we obtain for almost every $t \in [0,+\infty)$
	\begin{align}
	0 &\geq \frac{1}{2}\frac{d}{dt}\left((2\sigma c(t)-c^2(t)\|A\|^2)\|x(t)-x^*\|^2+\|x(t)-x^*\|_{c(t)M_1(t)}^2\right.\nonumber\\
	&\left.\quad+\|z(t)-z^*\|_{c(t)M_2(t)+c^2(t)B^*B}^2+\|y(t)-y^*\|^2\right)\nonumber\\
	&\quad+\underline{c}\|\dot{x}(t)\|^2_{M_1(t)}+\underline{c}\left(\sigma-\frac{1}{2}\overline{c}\|A\|^2\right)\|\dot{x}(t)\|^2+\underline{c}\|\dot{z}(t)\|^2_{M_2(t)+\frac{c(t)}{2}B^*B-\frac{L_{h_2}}{4}I}+\frac{1}{2}\|\dot{y}(t)\|^2\nonumber\\
	&\quad+\underline{c}\left(\sigma-\frac{1}{2}\overline{c}\|A\|^2\right) \|x(t)-x^*\|^2+\frac{\underline{c}^2}{2}\|Bz(t)-Bz^*\|^2\nonumber\\
	&\quad+\frac{\underline{c}}{L_{h_2}}\left\|\nabla h_2(z(t))-\nabla h_2(z^*)+\frac{L_{h_2}}{2}\dot{z}(t)\right\|^2\label{PAMADS:eq:sum3b}.
	\end{align}
	Similarly, in the case when $L_{h_1}>0$ and $L_{h_2}=0$ we obtain for almost every $t \in [0,+\infty)$
	\begin{align}
	0 &\geq \frac{1}{2}\frac{d}{dt}\left((2\sigma c(t)-c^2(t)\|A\|^2)\|x(t)-x^*\|^2+\|x(t)-x^*\|_{c(t)M_1(t)-\frac{L_{h_1}}{4}I}^2\right.\nonumber\\
	&\left.\quad+\|z(t)-z^*\|_{c(t)M_2(t)+c^2(t)B^*B}^2+\|y(t)-y^*\|^2\right)\nonumber\\
	&\quad+\underline{c}\|\dot{x}(t)\|^2_{M_1(t)}+\underline{c}\left(\sigma-\frac{1}{2}\overline{c}\|A\|^2\right)\|\dot{x}(t)\|^2+\underline{c}\|\dot{z}(t)\|^2_{M_2(t)+\frac{c(t)}{2}B^*B}+\frac{1}{2}\|\dot{y}(t)\|^2\nonumber\\
	&\quad+\underline{c}\left(\sigma-\frac{1}{2}\overline{c}\|A\|^2\right) \|x(t)-x^*\|^2+\frac{\underline{c}^2}{2}\|Bz(t)-Bz^*\|^2\nonumber\\
	&\quad+\frac{\underline{c}}{L_{h_1}}\left\|\nabla h_1(x(t))-\nabla h_1(x^*)+\frac{L_{h_1}}{2}\dot{x}(t)\right\|^2\label{PAMADS:eq:sum3c}
	\end{align}
	and in the case when $L_{h_1}=0$ and $L_{h_2}=0$ we obtain for almost every $t \in [0,+\infty)$
	\begin{align}
	0 &\geq \frac{1}{2}\frac{d}{dt}\left((2\sigma c(t)-c^2(t)\|A\|^2)\|x(t)-x^*\|^2+\|x(t)-x^*\|_{c(t)M_1(t)}^2\right.\nonumber\\
	&\left.\quad+\|z(t)-z^*\|_{c(t)M_2(t)+c^2(t)B^*B}^2+\|y(t)-y^*\|^2\right)\nonumber\\
	&\quad+\underline{c}\|\dot{x}(t)\|^2_{M_1(t)}+\underline{c}\left(\sigma-\frac{1}{2}\overline{c}\|A\|^2\right)\|\dot{x}(t)\|^2+\underline{c}\|\dot{z}(t)\|^2_{M_2(t)+\frac{c(t)}{2}B^*B}+\frac{1}{2}\|\dot{y}(t)\|^2\nonumber\\
	&\quad+\underline{c}\left(\sigma-\frac{1}{2}\overline{c}\|A\|^2\right) \|x(t)-x^*\|^2+\frac{\underline{c}^2}{2}\|Bz(t)-Bz^*\|^2\label{PAMADS:eq:sum3d}.
	\end{align}
	By arguing as above, we obtain also in these three cases that \eqref{PAMADS:lim_xzy} and \eqref{PAMADS:eq_conv_x_dot}-\eqref{PAMADS:eq_conv_x^*_Bz^*} hold.
	
	We can easily see that, if assumptions 1. or 2. from the theorem hold true, then we have 
	$\dot{z}(\cdot) \in  L^2([0,+\infty),\GG)$.
	Further, taking into acount the hypotheses concerning $\dot M_1,\dot M_2$ and $c$, we can easily 
	derive from Lemma ~\ref{PAMADS_lemma:xz_dot}~ that 
	\[\ddot{x}(\cdot) \in  L^2([0,+\infty),\HH) ~\text{and}~\ddot{z}(\cdot) \in  L^2([0,+\infty),\GG).\]
	It follows, for almost every $t \in [0,+\infty)$
	\[\frac{d}{dt}\|\dot{x}(t)\|^2=2\langle\ddot{x}(t),\dot{x}(t)\rangle \leq (\|\ddot{x}(t)\|^2+\|\dot{x}(t)\|^2) \]
	and the right-hand side is a function in $L^1([0,+\infty),\R)$. By Lemma ~\ref{PAMADS_lemma:Lp_lim0} we have
	\[\lim\limits_{t \to + \infty}\dot{x}(t)=0.\]
	Similarly, we obtain that
	\[\lim\limits_{t \to + \infty}\dot{z}(t)=0 \quad\lim\limits_{t \rightarrow +\infty}(x(t)-x^*)=0 ~ \text{and} ~\lim\limits_{t \rightarrow +\infty}(Bz(t)-Bz^*)=0.\]
	Because $\lim\limits_{t \to + \infty}\frac{1}{c(t)}\dot{y}(t)=\lim\limits_{t \to + \infty}(b-A(x(t)+\dot{x}(t))-B(z(t)+\dot{z}(t))) = b-Ax^*-Bz^*$  and the optimality condition $Ax^*+Bz^*=b$, we have
	\[\lim\limits_{t \to + \infty}\dot{y}(t)=0.\]
	
	In the following, let us prove that each weak sequential cluster point of $(x(t),z(t),y(t)),~t \in [0,+\infty)$ is a saddle point of $L$ (notice that the trajectories are bounded). Let $(x^*,\overline{z},\overline{y})$ be such a weak sequentially cluster point. This means that there exists a sequence $(s_n)_{n\geq0}$ with $s_n \rightarrow +\infty$ such that $(x(s_n),z(s_n),y(s_n))$ converges to $(x^*,\overline{z},\overline{y})$ as $n \rightarrow +\infty$ in the weak topology of $\HH \times \GG \times \mathcal{K}$ (notice that the trajectory $x(t)$ converges to $x^*$ strongly).
	
	From \eqref{PAMADS:OC_DS_1} we have $  \forall t \in [0, +\infty)$
	\[-M_1(s_n)\dot{x}(s_n)+A^*y(s_n)-\nabla h_1(x(s_n))\in ~\partial f(\dot{x}(s_n)+x(s_n)).\]
	
	Since $(M_1(s_n))_{n \geq 0}$ is bounded, $\nabla h_1$ is continuous, $(y(s_n))_{n \geq 0}$ converges weakly to $\overline{y}$, $\lim\limits_{t \rightarrow +\infty}\dot{x}(t)=0$ and $\lim\limits_{t \rightarrow +\infty}x(t)=x^*$, it follows from Proposition 20.33 in \cite{baco17}
	\[A^*\overline{y}-\nabla h_1(x^*)\in \partial f(x^*).\]
	From \eqref{PAMADS:OC_DS_2}, we get for every $n \geq 0$
	\[B^*\dot{y}(s_n)-M_2(s_n)\dot{z}(s_n)+B^*y(s_n)-\nabla h_2(z(s_n))+\nabla h_2(\dot{z}(s_n)+z(s_n))\in \partial (g+h_2)(\dot{z}(s_n)+z(s_n)),\]
	which is equivalent to
	\[\dot{z}(s_n)+z(s_n) \in\partial (g+h_2)^*(B^*(\dot{y}(s_n)+y(s_n))-M_2(s_n)\dot{z}(s_n)-\nabla h_2(z(s_n))+\nabla h_2(\dot{z}(s_n)+z(s_n))). \]
	By denoting for all $n \geq 0$
	\begin{align*}
	v_n &:= \dot{z}(s_n)+z(s_n),~u_n:=\dot{y}(s_n)+y(s_n)\\
	w_n &:=-M_2(s_n)\dot{z}(s_n)-\nabla h_2(z(s_n))+\nabla h_2(\dot{z}(s_n)+z(s_n)),
	\end{align*}
	we obtain
	\[v_n \in \partial (g+h_2)^*(B^*u_n+w_n) .\]
	Since $\nabla h_2$ is Lipschitz continuous, we have
	\begin{equation*}\label{PAMADS:eq_conv_h_2}
	\nabla h_2(\dot{x}(s_n)+x(s_n))-\nabla h_2(x(s_n)) \rightarrow 0 ~(n \to + \infty).
	\end{equation*}
	According to this fact and \eqref{PAMADS:eq_conv_x^*_Bz^*}, we have $v_n \rightharpoonup \overline{z}$, $u_n \rightharpoonup \overline{y}$, $Bv_n \to B\overline{z}=Bz^*$ and $w_n \to 0$ as $n \to +\infty$. Due to the monotonicity of the subdifferential, we have for all $(u,v)$ in the graph of $\partial (g+h_2)^*$ and for all $n\geq 0$
	\[\langle Bv_n-Bv, u_n \rangle + \langle v_n-v, w_n-u \rangle \geq 0.\]
	We let $n$ converge to $+\infty$ and obtain
	\[\langle \overline{z}-v, B^*\overline{y} -u\rangle \geq 0 ~~\forall (u,v) \text{ in the graph of }\partial (g+h_2)^*.\]	
	The maximal monotonicity of the convex subdifferential of $\partial (g+h_2)^*$ ensures that $\overline{z} \in \partial (g+h_2)^*(B^*\overline{y})$, which is equivalent to $B^*\overline{y}\in \partial(g+h_2)(\overline{z})$. So we have $B^*\overline{y}-\nabla h_2(\overline{z})\in \partial g(\overline{z})$.
	From \eqref{eq:proxAMA-DS} we have
	\[b-A(\dot{x}(s_n)+x(s_n))-B(\dot{z}(s_n)+z(s_n))=\frac{1}{c(s_n)}\dot{y}(s_n)\rightarrow 0~~(n \to \infty)\]
	and so it follows that $A\overline{x}+B\overline{z}=b$. In conclusion, $(x^*,\overline{z}, \overline{y})$ is a saddle point of the Lagrangian $L$.

	In the following, we show that $(x(t),z(t),y(t)),~t \in [0,+\infty)$ converges weakly. So we consider two sequential cluster points $(x^*,z_1,y_1)$ and $(x^*,z_2,y_2)$. Consequently, there exists $(k_n)_{n\geq0}$ and  $(l_n)_{n\geq0}$, such that the subsequence $(x(k_n),z(k_n),y(k_n))$ converges weakly to $(x^*,z_1,y_1)$ as $n \to +\infty$ and $(x(l_n),z(l_n),y(l_n))$ converges weakly to $(x^*,z_2,y_2)$ as $n \to +\infty$, respectively. As seen before,  $(x^*,z_1,y_1)$ and $(x^*,z_2,y_2)$ are both saddle points of the Lagrangian $L$. From \eqref{PAMADS:lim_xzy}, which is fulfilled for every saddle point of the Lagrangian $L$, we obtain
	\begin{equation}\label{PAMADS:lim_z1z2y1y2}
	\exists\lim\limits_{t \to + \infty}\left(\|z(t)-z_1\|_{c(t)M_2(t)+c^2(t)B^*B}^2-\|z(t)-z_2\|_{c(t)M_2(t)+c^2(t)B^*B}^2+\|y(t)-y_1\|^2-\|y(t)-y_2\|^2\right):= T.
	\end{equation}
	For $t \in [0,+\infty)$, we have
	\begin{align*}
	\|z(t)-&z_1\|_{c(t)M_2(t)+c^2(t)B^*B}^2-\|z(t)-z_2\|_{c(t)M_2(t)+c^2(t)B^*B}^2+\|y(t)-y_1\|^2-\|y(t)-y_2\|^2\\
	=&\|z_2-z_1\|_{c(t)M_2(t)+c^2(t)B^*B}^2+2\langle z(t)-z_2,z_2-z_1 \rangle_{c(t)M_2(t)+c^2(t)B^*B}\\
	&+\|y_2-y_1\|^2+\langle y(t)-y_2,y_2-y_1\rangle.
	\end{align*}
	Since $c(t)M_2(t)+c^2(t)B^*B$ is monotonically decreasing and positive definite, there exists a positive definite operator $M$ such that $c(t)M_2(t)+c^2(t)B^*B$ converges to $M$ in  the strong topology as $t \to + \infty$. Furthermore, let $c:=\lim\limits_{t \to + \infty}c(t)>0$. Taking the limits in \eqref{PAMADS:lim_z1z2y1y2} along the subsequences $(k_n)_{n\geq0}$ and  $(l_n)_{n\geq0}$ it yields
	\[T=-\|z_2-z_1\|_M^2-\|y_2-y_1\|^2=\|z_2-z_1\|_M^2+\|y_2-y_1\|^2,\]
	so that
	\[\|z_2-z_1\|_M^2+\|y_2-y_1\|^2=0.\]
	It follows that $z_1=z_2$ and $y_1=y_2$. In consequence, $(x(t),z(t),y(t))$ converges weakly to a saddle point of the Lagrangian $L$.

\end{proof}

In the following corollary we set for every $t\in [0,+\infty)$ $M_1(t)=0$ and $M_2(t)=\frac{1}{\tau(t)}\text{Id}-c(t)B^*B$,
where $\tau(t)>0$ and $\tau(t)c(t)\|B\|^2 \leq 1$, like in Remark \ref*{PAMADS:re:M_2(t)}. Then we get the following convergence result for the trajectory $(x(t),z(t),y(t))$ of the dynamical system \eqref{eq:proxAMA-DS_special_case} as a special case of Theorem \ref{PAMADS:th:convergence}:

\begin{corollary}\label{PAMADS:co:convergence_special_case}
In the setting of the optimization problem \eqref{opt:Prox-AMA:primal}, assume that the set of saddle points of the Lagrangian $L$ is nonempty, the map $\tau:[0,+\infty) \to (0,+\infty)$ is locally absolutely continuous, monotonically increasing and fulfills $\sup_{t\geq 0}\frac{\tau'(t)}{\tau(t)^2}<\infty$. Furthermore we assume that for an $\epsilon>0$ the map $c:[0,+\infty)\to \left[\epsilon, \frac{\sigma}{\|A\|^2}-\epsilon\right]$ is monotonically decreasing and Lipschitz continuous. If $c(t)$ is a constant function, namely $c(t)=c$ for all $t \in [0,+\infty)$, then its enough to assume that $\epsilon \leq c \leq\frac{2\sigma}{\|A\|^2}-\epsilon$. Furthermore we assume that
\begin{equation}\label{PAMADS:eq:co:convergence_special_case}
c(t)\tau(t)\|B\|^2 \leq 1-\frac{\tau(t)}{4}L_{h_2}, ~ -c'(t)\|B\|^2 \leq \frac{\tau'(t)}{\tau(t)^2}
\end{equation}
for all $ t \in [0,+\infty)$. For an arbitrary starting point $(x^0,z^0,y^0)\in \HH \times \GG \times \mathcal{K}$, let $(x,z,y): [0,+\infty)\to\HH \times \GG \times \mathcal{K}$ be the unique strong global solution of the dynamical system \eqref{eq:proxAMA-DS_special_case}. If one of the following conditions holds:
\begin{enumerate}
	\item $c(t)\tau(t)\|B\|^2 < 1-\frac{\tau(t)}{4}L_{h_2}$ for all $ t \in [0,+\infty)$
	\item there exists $\beta>0$ such that $B^*B \in P_\beta(\GG)$;
\end{enumerate}
then the trajectory $(x(t),z(t),y(t))$ converges weakly to a saddle point of $L$ as $t \to + \infty$.	
\end{corollary}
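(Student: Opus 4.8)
The plan is to recognize the corollary as the specialization of Theorem~\ref{PAMADS:th:convergence} obtained by substituting $M_1(t)\equiv 0$ and $M_2(t)=\frac{1}{\tau(t)}\text{Id}-c(t)B^*B$; once every hypothesis of that theorem is checked for these two maps, the asserted weak convergence follows at once. First I would dispose of $M_1$. The constant map $M_1(t)\equiv 0$ is locally absolutely continuous, trivially monotonically decreasing in the Loewner order, and satisfies $\sup_{t\geq 0}\|\dot M_1(t)\|=0<+\infty$. The remaining requirement $M_1(t)-\frac{L_{h_1}}{4}I\in S_+(\HH)$ reduces to $-\frac{L_{h_1}}{4}I\succcurlyeq 0$ and hence to $L_{h_1}=0$; this is the (implicit) condition that the smooth part of the first block is affine, in accordance with the AMA setting behind \eqref{eq:proxAMA-DS_special_case}. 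Since the hypotheses on $c$ coincide in both statements, no work is needed there.

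The heart of the argument is the verification of the structural hypotheses on $M_2$. Local absolute continuity of $t\mapsto M_2(t)$ is inherited from that of $\tau$ (hence of $1/\tau$, as $\tau>0$) and of $c$, together with the boundedness of $B^*B$. Using the product rule \eqref{diff} I would compute, for almost every $t$,
\[
\dot M_2(t)=-\frac{\tau'(t)}{\tau(t)^2}\text{Id}-c'(t)B^*B,
\]
from which $\sup_{t\geq 0}\|\dot M_2(t)\|<+\infty$ is immediate given $\sup_{t\geq 0}\frac{\tau'(t)}{\tau(t)^2}<+\infty$, the Lipschitz continuity of $c$ (so $c'$ is essentially bounded), and $\|B^*B\|=\|B\|^2<+\infty$.

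The decisive observation is that the two relations in \eqref{PAMADS:eq:co:convergence_special_case} are engineered precisely to deliver, respectively, the semidefiniteness condition $M_2(t)-\frac{L_{h_2}}{4}I\in S_+(\GG)$ and the monotone-decreasing property of $M_2$. For the first, for every $w\in\GG$ one estimates
\[
\langle (M_2(t)-\tfrac{L_{h_2}}{4}I)w,w\rangle=\left(\tfrac{1}{\tau(t)}-\tfrac{L_{h_2}}{4}\right)\|w\|^2-c(t)\|Bw\|^2\geq \tfrac{1}{\tau(t)}\left(1-\tfrac{\tau(t)}{4}L_{h_2}-c(t)\tau(t)\|B\|^2\right)\|w\|^2\geq 0,
\]
the final inequality being the first relation in \eqref{PAMADS:eq:co:convergence_special_case}. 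For monotonicity, using $c'(t)\leq 0$ and $\|Bw\|^2\leq\|B\|^2\|w\|^2$,
\[
\langle \dot M_2(t)w,w\rangle=-\frac{\tau'(t)}{\tau(t)^2}\|w\|^2-c'(t)\|Bw\|^2\leq\left(-\frac{\tau'(t)}{\tau(t)^2}-c'(t)\|B\|^2\right)\|w\|^2\leq 0,
\]
where the last step invokes the second relation $-c'(t)\|B\|^2\leq\frac{\tau'(t)}{\tau(t)^2}$; thus $\dot M_2(t)\preccurlyeq 0$ almost everywhere and $M_2$ is decreasing. The two alternative hypotheses then transfer directly: strengthening the first inequality to the strict form in condition~1 upgrades the displayed lower bound to $\langle(M_2(t)-\frac{L_{h_2}}{4}I)w,w\rangle\geq\alpha\|w\|^2$ for a suitable $\alpha>0$, which is assumption~1 of the theorem, while condition~2 ($B^*B\in P_\beta(\GG)$) is verbatim assumption~2. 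With all hypotheses of Theorem~\ref{PAMADS:th:convergence} in force, its conclusion gives the weak convergence of $(x(t),z(t),y(t))$ to a saddle point of $L$.

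The step I expect to be the main obstacle is the passage from the \emph{strict} pointwise inequality of condition~1 to a \emph{uniform} coercivity constant $\alpha>0$: one must prevent $\frac{1}{\tau(t)}-\frac{L_{h_2}}{4}-c(t)\|B\|^2$ from degenerating to $0$ as $t\to+\infty$. The monotonicity of $\tau$ (increasing) and $c$ (decreasing) forces both to possess limits, and I would exploit this together with the strict inequality to extract a positive uniform gap; quantifying this gap is the only genuinely delicate estimate, all other verifications being direct substitutions into Theorem~\ref{PAMADS:th:convergence}.
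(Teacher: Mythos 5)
Your reduction strategy is the same as the paper's (the paper gives no explicit proof, presenting the corollary as a direct specialization of Theorem \ref{PAMADS:th:convergence} with $M_1\equiv 0$ and $M_2(t)=\frac{1}{\tau(t)}\Id-c(t)B^*B$), and most of your verifications are correct -- the formula for $\dot M_2$, the use of the second inequality in \eqref{PAMADS:eq:co:convergence_special_case} to get the Loewner decrease, and your observation that $M_1\equiv 0$ silently forces $L_{h_1}=0$ (a point the paper glosses over) are all sound. The genuine gap sits exactly at the step you flagged: corollary condition~1 (strict pointwise inequality) does \emph{not} imply theorem assumption~1 (a uniform $\alpha>0$ with $M_2(t)-\frac{L_{h_2}}{4}I\in P_\alpha(\GG)$), and your proposed fix via the monotone limits of $\tau$ and $c$ cannot work, because strictness at every finite $t$ does not survive the passage to the limit. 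Concretely: take $L_{h_2}=0$, $\GG=\R^2$, $B^*B=\mathrm{diag}(1,0)$ (so $\|B\|=1$ and condition~2 fails), $c(t)=\epsilon+e^{-t}$, $\frac{1}{\tau(t)}=\epsilon+2e^{-t}$, and problem data with $\sigma/\|A\|^2$ large enough. Then $\tau$ is increasing with $\frac{\tau'(t)}{\tau(t)^2}=2e^{-t}$ bounded, $c$ is decreasing and Lipschitz, $-c'(t)\|B\|^2=e^{-t}\leq 2e^{-t}=\frac{\tau'(t)}{\tau(t)^2}$, and $c(t)\tau(t)\|B\|^2<1$ strictly for all $t$; yet $M_2(t)=\mathrm{diag}\bigl(e^{-t},\,\epsilon+2e^{-t}\bigr)$ has smallest eigenvalue $e^{-t}\to 0$, so no uniform $\alpha$ exists. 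Note that here the gap function $\frac{1}{\tau(t)}-\frac{L_{h_2}}{4}-c(t)\|B\|^2=e^{-t}$ is even monotone and positive, and still tends to $0$: no monotonicity argument can rescue the transfer.

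The repair requires opening the theorem's proof instead of quoting its statement. Assumption~1 is used there in exactly two places: to deduce $\dot z\in L^2([0,+\infty),\GG)$ from \eqref{PAMADS:eq_conv_z_dot}, whose weight is $M_2(t)+\frac{c(t)}{2}B^*B-\frac{L_{h_2}}{4}I$, and to ensure that the strong limit $M$ of $c(t)M_2(t)+c^2(t)B^*B$ in the cluster-point argument is (uniformly) positive definite. Both survive under the corollary's strict condition~1 precisely because of the extra $B^*B$ terms, which your black-box invocation discards. First, for every $w\in\GG$,
\begin{align*}
\Bigl\langle \Bigl(M_2(t)+\tfrac{c(t)}{2}B^*B-\tfrac{L_{h_2}}{4}I\Bigr)w,w\Bigr\rangle
&=\Bigl(\tfrac{1}{\tau(t)}-\tfrac{L_{h_2}}{4}\Bigr)\|w\|^2-\tfrac{c(t)}{2}\|Bw\|^2\\
&> c(t)\|B\|^2\|w\|^2-\tfrac{c(t)}{2}\|B\|^2\|w\|^2
\ \geq\ \tfrac{\epsilon\|B\|^2}{2}\,\|w\|^2,
\end{align*}
a bound uniform in $t$ (this needs $B\neq 0$; for $B=0$ condition~1 degenerates and a separate argument is required). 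Second, for this particular $M_2$ one has the identity $c(t)M_2(t)+c^2(t)B^*B=\frac{c(t)}{\tau(t)}\Id$, and the non-strict inequality in \eqref{PAMADS:eq:co:convergence_special_case} forces $\tau(t)\leq\bigl(\epsilon\|B\|^2+\tfrac{L_{h_2}}{4}\bigr)^{-1}$, whence $\frac{c(t)}{\tau(t)}\geq\epsilon\bigl(\epsilon\|B\|^2+\tfrac{L_{h_2}}{4}\bigr)>0$ uniformly, so $M$ is positive definite. With these two substitutes every remaining step of the theorem's proof goes through verbatim. Under condition~2 your argument is complete as written, since that hypothesis transfers literally.
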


\begin{remark}\label{rem-ex}
	An appropriate choice for $\tau(t)$ to fulfill the assumptions \eqref{PAMADS:eq:co:convergence_special_case} is for example $\tau(t)=\frac{a}{c(t)}$, where $0 < a \leq \frac{1}{\|B\|^2}$. Thats how we set it in Example \ref{PAMADS:example}.
\end{remark}

If $h_1=0$ and $h_2=0$,  and $M_1(t)=0$ and $M_2(t) = 0$ for all $t \geq 0$,  then the dynamical system \eqref{eq:proxAMA-DS} becomes a continuous version of the AMA method proposed by Tseng in \cite{tseng91} which can be written as

\begin{equation*}
\begin{cases}
\dot{x}(t)+x(t) = \text{argmin}_{x \in  \HH} \left\{f(x)-\langle y(t),Ax(t)\rangle\right\}\\[2ex]
\dot{z}(t)+z(t)\in \text{argmin}_{z \in  \GG} \{g(z)-\langle y(t),Bz\rangle+\frac{c(t)}{2}\|A(x(t)+\dot{x}(t))+Bz-b\|^2\}\\[2ex]
\dot{y}(t)=c(t)\left(b-A(x(t)+\dot{x}(t))-B(z(t)+\dot{z}(t))\right)\\[2ex]
x(0)=x^0\in \HH, z(0)=z^0 \in \GG, y(0)=y^0 \in \mathcal{K},
\end{cases}
\end{equation*}
where $c(t)>0$ for all $t \in [0,+\infty)$.

According to Theorem \ref{PAMADS:th:convergence} (for $L_{h_1}=L_{h_2}=0$), the generated trajectories converge  weakly to a saddle point of the Lagrangian, if we choose the map $c(t)$ as in this theorem and if there exists $\beta >0$ such that $B^*B\in \mathcal{P}_{\beta}(\mathcal{G})$.

\section{Conclusions and perspective}

In this paper we introduced and investigated a dynamical system which generates three trajectories in order
to approach the set of saddle points of the Lagrangian associated to a structured convex optimization problem with
linear constraints. Under appropriate conditions we showed that the systems is well-posed. The asymptotic
analysis is derived in the framework of Lyapunov analysis by finding an appropriate energy functional. The discretization of the considered dynamics is related to the Proximal AMA \cite{bibo19} and AMA \cite{tseng91} numerical schemes.

Let us mention some open questions as future research directions:

(i) Investigate convergence rates for the trajectories and also for the function values along the orbits. Notice
that in our setting $f$ is strongly convex and this might induce some rates. For the AMA algorithm
in \cite{tseng91} there are some results related to rates.

(ii) Consider second order dynamical systems in order to accelerate the convergence of the trajectories.
This would induce inertial terms in the discretized counterparts of the dynamics. For optimization
problems involving compositions with linear operators this is not a trivial task. We mention here
the paper of Attouch \cite{att}, where the starting point is a second order dynamics with vanishing damping for
monotone inclusion problems. The discretization leads to Proximal ADMM algorithms with momentum.
For an accelerated AMA numerical scheme we refer to \cite{g2014}.

(iii) The aim would be to conduct more involved numerical experiments related to optimization problems. More precisely, consider discretizations with variable step sizes in order to derive more general numerical schemes. This, in combination with different choices of the time varying positive semidefinite operators $M_1$ and $M_2$,  could have a great impact on the theoretical results and experiments.

\end{document}